\numberwithin{equation}{section}
\newtheorem{cor}{Corollary}
\newtheorem{prop}{Proposition}
\newtheorem{remark}{Remark}
\newtheorem{lemma}{Lemma}
\newtheorem{theorem}{Theorem}
\newtheorem{defn}{Definition}
\newtheorem{assumption}{Assumption}
\newcommand{\sm}{\left(\begin{smallmatrix}}
\newcommand{\esm}{\end{smallmatrix}\right)}
\newcommand{\eps}{\varepsilon}
\newcommand{\supp}{\operatorname{supp}}
\newcommand{\Ran}{\operatorname{ran}}
\newcommand{\codim}{\operatorname{codim}}
\newcommand{\ck}{\mathcal{K}}
\newcommand{\ch}{\mathcal{H}}
\newcommand{\cL}{\mathcal{L}}
\renewcommand{\eps}{\varepsilon}
\newcommand{\C}{\ensuremath{\mathbb{C}}}
\newcommand{\R}{\ensuremath{\mathbb{R}}}
\newcommand{\N}{\ensuremath{\mathbb{N}}}
\newcommand{\Z}{\ensuremath{\mathbb{Z}}}
\newcommand{\lo}{\mathfrak{L_0}}
\newcommand{\go}{\mathfrak{G_0}}
\newcommand{\lp}{\mathfrak{L_1}}
\newcommand{\gp}{\mathfrak{G_1}}
\newcommand{\Span}{\operatorname{span}}
\newcommand{\norm}[1]{\left\Vert#1\right\Vert}
\newcommand{\twovec}[2]{\left(\begin{array}{c} #1 \\  #2 \end{array}\right)}
\newcommand{\la}{\left\langle}
\newcommand{\ra}{\right\rangle}
\newcommand{\llangle}{\left\langle}
\newcommand{\rrangle}{\right\rangle}
\newcommand{\bx}{\mathbf{x}}
\newcommand{\bX}{\mathbf{X}}
\newcommand{\bz}{\mathbf{z}}
\newcommand{\by}{\mathbf{y}}
\newcommand{\be}{\begin{equation*}}
\newcommand{\ee}{\end{equation*}}
\newcommand{\bea}{\begin{eqnarray*}}
\newcommand{\eea}{\end{eqnarray*}}
\newcommand{\ben}{\begin{eqnarray}}
\newcommand{\een}{\end{eqnarray}}
\newcommand{\beq}{\begin{equation}}
\newcommand{\eeq}{\end{equation}}
\newcommand{\enq}{\end{equation}}
\title{Gap Localization of TE-Modes by arbitrarily weak defects - multiband case}
\author{B.M.Brown}
\author{V.Hoang}
\author{M.Plum} 
\author{M.Radosz} 
\author{I.Wood} 
\begin{document}

\begin{abstract}
This paper considers the propagation of TE-modes in photonic crystal waveguides. The waveguide is created by introducing a linear defect into a periodic background medium. Both the periodic background problem and the perturbed problem are modelled by a divergence type equation. A feature of our analysis is that we allow discontinuities in the coefficients of the operator, which is required to model many photonic crystals. Using the Floquet-Bloch theory in negative order Sobolev spaces, we characterize the precise number of eigenvalues created by the line defect in terms of the band functions of the original periodic background medium for arbitrarily weak defects.
\end{abstract}

\maketitle

\section{Introduction}
Electromagnetic waves in periodically structured media, such as photonic crystals and metamaterials, are a subject of ongoing interest. Typically, the propagation of waves in such media exhibits \emph{band-gaps}; see e.g. \cite{Joann,KuchRev}. These are intervals on the frequency
or energy axis where propagation is forbidden. Mathematically, these correspond to gaps in the spectrum of the operator describing a problem with periodic background medium.
The existence of these gaps for certain choices of material coefficients was proved in \cite{CD99,FK96,HPW09} and in \cite{Fil03} for the full Maxwell case. 

In a previous paper \cite{BHPRW17}, we studied the propagation of TE-polarized waves in two-dimensional photonic crystals that contain line defects and gave rigorous sufficient conditions which imply spectral localization in band gaps. Our results were restricted to the case where only one band function (see \eqref{def_band_functions}) contributes to the edge of the band gap. In this paper, we deal with the general situation where multiple bands contribute to the edge of the gap. We also develop a new approach to characterize the precise number of eigenvalues created by the line defect in terms of the band functions of the original periodic structure. 

Our results are applicable to non-smooth coefficients. This is motivated by physical applications, where, to produce the typical band-gap spectrum,  the coefficient of the background medium is usually piecewise constant. See, for instance, \cite{CD99,FK96,Fil03}. In order to overcome the arising difficulties, we use  Floquet-Bloch theory in negative function spaces \cite{BHPW11}. Additionally, all our results do not depend on the precise geometry of the perturbation, e.g. the shape of the inclusions defined by the region within the periodicity cell where the 
perturbed material coefficients differ from the unperturbed ones. For a more detailed discussion of relevant background material, we refer to \cite{BHPRW17} and references therein. 

The structure of our paper is as follows: In  section \ref{sec_operators}  we give a brief description of the periodic problem and its perturbation by a line defect and formulate the operator-theoretic background. The following section \ref{sec:FB} introduces the Floquet-Bloch theory in negative spaces with the technical proof provided in Appendix \ref{FB}. Section \ref{sec_Rayleigh_Quotient} contains some key preparatory Lemmas and estimates. An upper estimate on the number of eigenvalues created in the band gap is given in section \ref{sec_upperbound} while section \ref{sec_LowerBound} provides a lower bound and combines all results to our main statement (Theorem \ref{theoremall}) on the precise number of eigenvalues.

We note that a variational method similar to the one here is used in \cite{Johnson} to prove generation of spectrum, though not the precise number of eigenvalues, in the band gaps of periodic Schr\"{o}dinger operators under a slightly weaker sign condition on the perturbation than we require here.

\section{The operator theoretic formulation}\label{sec_operators}
We  consider the propagation of electromagnetic waves in a non-magnetic, inhomogeneous medium
described by a varying dielectric function $\eps(\bX)$ with $\bX = (x, y, z)$. Assuming that
the magnetic field $\mathbf{H}$ has the form $\mathbf{H}= H(x, y) \hat{\bz}$, where $\hat{\bz}$ denotes the unit vector in the $z$-direction, we look for time-harmonic solutions to Maxwell's equations.  This leads to the equation
\begin{align} \label{eq:magfield}
- \nabla \cdot \frac{1}{\eps(\bx)} \nabla H = \lambda H
\end{align}
for the $\bz$-component $H$ of the magnetic field. Note that in the context of polarized waves, we
assume that all fields and constitutive functions depend only on $\bx = (x,y)$. 

The periodic background medium is characterised by $\eps_0(\bx)$, where for simplicity we assume that the unit square $[0, 1]^2$ is a cell of periodicity.

Let  $\hat{\bx}=(1,0)$ and $\hat \by=(0,1)$.   
We now introduce a line defect, which we assume  to be aligned along the $\hat{\bx}$-axis and preserving the periodicity in the $\hat{\bx}$-direction. In addition, the defect is assumed to be localised in the $\hat \by$-direction.
The new (and perturbed) system is described by a  dielectric function $\eps_1(\bx)$, periodic in $\hat \bx$-direction (see Figure \ref{figwav}), i.e.
\begin{align}
\eps_1(\bx + m \hat{\mathbf{x}}) = \eps_1(\bx) \quad (m\in \Z).
\end{align}
\begin{figure}
    \centering
    \includegraphics[scale=0.31]{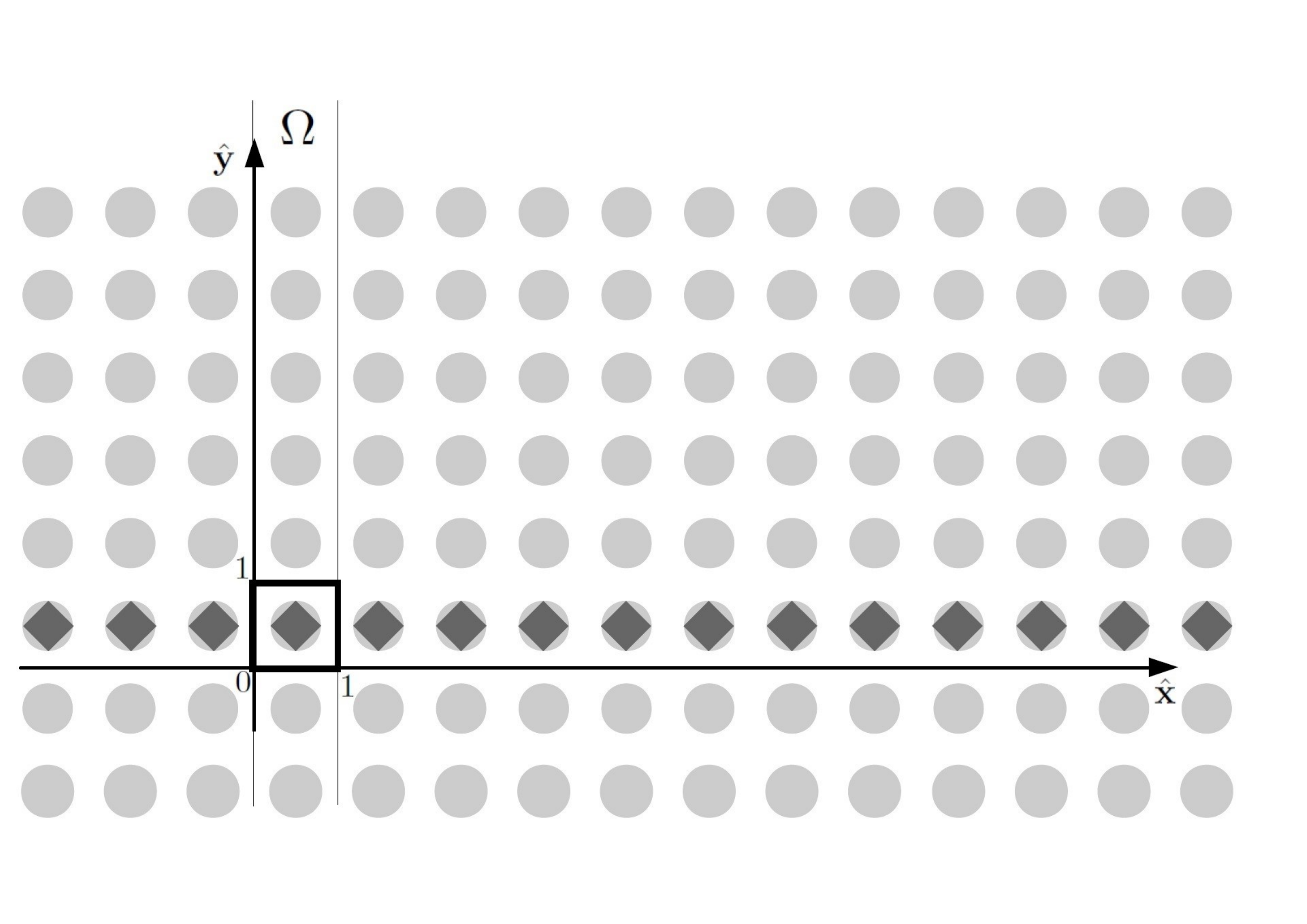}
    \caption{Illustration of the line defect and the strip $\Omega=(0,1)\times \R$.}
    \label{figwav}
\end{figure}

\begin{assumption}\label{asseps}
We make the following general assumptions on the material coefficients, valid throughout the paper:
\begin{enumerate}
\item[(i)]  $\eps_0,\eps_1\in L^\infty(\R^2)$.
\item[(ii)]
  $\eps_i\geq c_0>0$ for some constant $c_0$ and $i=0,1$.
\item[(iii)]The perturbation is nonnegative, i.e.~
\begin{align}\label{positivity}
\eps_1(\bx) - \eps_0(\bx) \geq 0.
\end{align}
\item[(iv)] There exists a ball $D$ such that $\eps_1-\eps_0>0$ on $D$. 
\end{enumerate}
\end{assumption}
Since both the perturbed and unperturbed systems are periodic in the $\hat{\bx}$-direction, we can apply Bloch's theorem \cite{OdehKeller,KuchmentBook}
to reduce both problems to problems on the strip $\Omega:=(0,1)\times\R$. 
For fixed quasi-momentum $k_x$ we introduce the space of quasi-periodic 
$H^1$-functions on $\Omega$
$$
H^1_{qp}(\Omega):=
\{ u \in H^1_\text{loc}(\R^2) :u\vert_{\Omega}\in H^1(\Omega) \hbox{ and } u(\bx+(m,0))=e^{i k_x m} u(\bx), m\in \Z, \bx\in \R^2 \}. 
$$

For $u,v\in H^1_{qp}(\Omega)$ consider the sesquilinear form 
\beq\label{Bo}
B_0 [u,v]=\int_\Omega \left(\frac{1}{\eps_0(\bx)}\nabla u \overline{\nabla v} + u\overline{v}\right)~d\bx.
\eeq

As $\eps_0$ is bounded and bounded away from zero, we can
introduce a new inner product on 
$H^1_{qp}(\Omega)$ given by $$ \la u,v\ra
_{H^1_{qp}(\Omega)}:=B_0[u,v]$$ which is equivalent to the standard
inner product in  $H^1(\Omega)$. When there is no danger of confusion, we denote the associated norm
$\norm{\cdot}_{H^1}$.
\begin{defn}
Let $H^{-1}_{qp}(\Omega)$ denote the dual space of $H^1_{qp}(\Omega)$. Let $\phi:H^1_{qp}(\Omega)\to H^{-1}_{qp}(\Omega)$ be defined by
\begin{equation}
(\phi u) [\varphi] =B_0[u,\varphi]  \mbox{  \; for\; all\; } u,\varphi\in H^1_{qp}(\Omega),
\label{eq:star}
\end{equation}
where the $w[\varphi]$-notation indicates the dual pairing, i.e. it is the action of the linear functional $w$
on the function $\overline \varphi$. 
\end{defn}
$\phi$ is an isometric isomorphism, and hence  the inner product on $H^{-1}_{qp}(\Omega)$ given by $$\llangle u,v\rrangle _{H^{-1}_{qp}(\Omega)}:=\llangle \phi^{-1}u,\phi^{-1}v\rrangle _{H^1_{qp}(\Omega)}$$ induces a norm which coincides  with the usual operator sup-norm on $H^{-1}_{qp}(\Omega)$.

After this preparation, we now introduce the realisations of the operators in $H^{-1}_{qp}(\Omega)$  and
  define the operator $\lo:D(\lo)\to H^{-1}_{qp}(\Omega)$ by
$D(\lo):=H^1_{qp}(\Omega)\subset H^{-1}_{qp}(\Omega)$ with
$$\lo u:=\phi u - u \label{eq:2.1}.$$
Then $\lo+1$ is bijective and  both $\lo$ and $\go:=(\lo+1)^{-1}$ are self-adjoint, see \cite[Proposition 4.1]{BHPRW17}.
$\lo$ corresponds to the fully periodic problem \eqref{eq:magfield} with $\eps=\eps_0$.

 The useful identity
\beq\label{eq:phi} \llangle u, v\rrangle _{H^{-1}} = \llangle \phi^{-1} u, \phi^{-1} v\rrangle _{H^1} = \llangle u, \phi^{-1} v\rrangle _{L^2}\quad \hbox{for} \quad u\in L^2(\Omega), v\in H^{-1}_{qp}(\Omega)\eeq
follows from the definitions of $\phi$ and $\lo $.

Let $(\Lambda_0,\Lambda_1)$ be a spectral gap for $\lo$ and
$\mu\in ((\Lambda_1+1)^{-1}, (\Lambda_0+1)^{-1})$. Then $1/\mu\in \rho(\lo+1)$, so
\beq\label{logo}\frac{1}{\mu}\left[\frac{1}{\mu}- (\lo +1)\right]^{-1}=((I-\mu (\lo +1))^{-1}=(I-\mu \go^{-1})^{-1}\eeq
is well-defined and maps $H^{-1}_{qp}(\Omega)$ bijectively onto $H^1_{qp}(\Omega)$. The operator
$(I-\mu \go^{-1})^{-1}$ is the solution operator to the problem 
$$ \llangle u,\varphi\rrangle_{L^2} -\mu \int_{\Omega} \left (\tfrac{1}{\eps_0} \nabla u \overline{\nabla \varphi} + u \overline\varphi\right)d\bx = f[\varphi], \quad \hbox{ for all } \varphi \in H^1_{qp}(\Omega)
$$
for a given $f\in H^{-1}_{qp}(\Omega)$.

We now examine the perturbed problem. Let the bilinear form $B_1$ and the operator $\lp:H^1_{qp}(\Omega)\to H^{-1}_{qp}(\Omega)$ be defined by
\begin{align}
\label{Gone} B_1[u,\varphi]:=\int_\Omega \left [  \dfrac{1}{\eps_1} \nabla u   \overline{\nabla \varphi} + u \overline\varphi \right ] dx \hbox{ and }
 ((\lp+1)u)[\varphi]=B_1(u,\varphi) \quad\hbox{ for  } u, \varphi \in H^1_{qp}(\Omega).
\end{align}
Moreover, we define $\gp=(\lp+1)^{-1}$. Then 
$\gp:H^{-1}_{qp}(\Omega)\to H^1_{qp}(\Omega)$ is a bounded non-negative operator (see \cite[Lemma 1 \& 2]{BHPRW17}.)

\begin{remark}
We note that just as in \cite[Section 5]{BHPW11}, the spectra of the $H^{-1}$-realizations $\lo$ and $\lp$ and the corresponding realizations of the operators in $L^2(\Omega)$ coincide. 
\end{remark}
Suppose now $(\Lambda_0, \Lambda_1)$ is a band gap of the unperturbed operator $\lo$.
We will give conditions which ensure that localized modes, i.e.~eigenvalues of the perturbed operator $\lp$, appear in the band gap
under arbitrarily weak perturbations  and  use a Birman-Schwinger-type reformulation to find the eigenvalues $\lambda$ of the operator $\lp$ in a spectral gap. For proofs of the results in this section and more details on the reformulation, see \cite[Section 5]{BHPRW17}.

Consider the operator
$$K:=(\go ^{-1}\gp -I):H^{-1}_{qp}(\Omega) \to H^{-1}_{qp}(\Omega),$$
set $\ck = \overline{\Ran K}\subseteq H^{-1}_{qp}(\Omega)$  and let  $P:H^{-1}_{qp}(\Omega)\to \ck$ be the orthogonal projection on $\ck$.
On $\ck$, we introduce a new inner product given by
\begin{align}\label{defInnerprod}
\la f, g \ra_\ck  := \la K f, g \ra_{H^{-1}}.
\end{align} 
The symmetry and definiteness of this inner product is shown in \cite[Appendix A]{BHPRW17}.

The following lemma gives useful estimates for $K$ in terms of the size of the perturbation. In particular it shows that for small perturbations, the only dependence of the bound for $\norm{K}$ on the perturbation $\eps_1$ is through the term $\norm{\frac{1}{\eps_0}-\frac{1}{\eps_1}}_\infty$. 
\begin{lemma} \label{lem12} The following estimates hold:
\begin{enumerate}
\item[(i)]$$\quad \norm{K}\leq  \norm{\gp  }_{H^{-1}\to H^1} \norm{\frac{1}{\eps_0}-\frac{1}{\eps_1}}_\infty$$  
\item[(ii)] 
$$\norm{ Ku}_{H^{-1}}^2\leq \norm{K}\norm{u}^2_\ck \quad (u\in\ck) $$
\item[(iii)]
Moreover, if $\eta:=\norm{\frac{1}{\eps_0}-\frac{1}{\eps_1}}_\infty<1/\norm{\go  }_{H^{-1}\to H^1}$, then
$$ 
\quad \norm{\gp  }_{H^{-1}\to H^1} \leq \frac{\norm{\go  }_{H^{-1}\to H^1}}{1-\eta \norm{\go }_{H^{-1}\to H^1}}.
$$
\end{enumerate}
\end{lemma}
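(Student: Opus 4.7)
The plan is to handle the three parts separately. Parts (i) and (iii) rest on the same core computation with the sesquilinear forms $B_0$ and $B_1$ together with Cauchy-Schwarz, while (ii) is a general operator-theoretic argument on $\ck$.

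For (i), I would begin by exploiting $\go^{-1} = \lo + I = \phi$ to rewrite $K = \phi\gp - I$. For $u\in H^{-1}_{qp}(\Omega)$, setting $v := \gp u\in H^1_{qp}(\Omega)$ gives $B_1[v,\varphi] = u[\varphi]$, while $(\phi v)[\varphi] = B_0[v,\varphi]$, so subtraction yields the key identity
$$Ku[\varphi] \;=\; B_0[v,\varphi] - B_1[v,\varphi] \;=\; \int_\Omega\left(\frac{1}{\eps_0}-\frac{1}{\eps_1}\right)\nabla v\cdot\overline{\nabla\varphi}\,d\bx.$$
The desired bound then follows from Cauchy-Schwarz in $L^2$, the equivalence between $\|\nabla\cdot\|_{L^2}$ and the $B_0$-induced $H^1$-norm (a consequence of Assumption~\ref{asseps}), and the estimate $\|v\|_{H^1}\le\|\gp\|_{H^{-1}\to H^1}\|u\|_{H^{-1}}$, after taking supremum over $\varphi$ of $H^1$-norm one.

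For (ii), the decisive observation is that $\la u,w\ra_\ck := \la Ku,w\ra_{H^{-1}}$ is by construction a genuine inner product on $\ck$ (cf.~\cite[Appendix A]{BHPRW17}); in particular, it is a positive semidefinite Hermitian form and therefore satisfies Cauchy-Schwarz. Since $Ku\in\Ran K\subseteq\ck$, the choice $w = Ku$ yields
$$\|Ku\|_{H^{-1}}^4 \;=\; |\la Ku, Ku\ra_{H^{-1}}|^2 \;\le\; \la Ku,u\ra_{H^{-1}}\,\la K(Ku),Ku\ra_{H^{-1}} \;\le\; \|u\|_\ck^2\cdot\|K\|\,\|Ku\|_{H^{-1}}^2,$$
where the final step uses $|\la K(Ku),Ku\ra_{H^{-1}}|\le\|K(Ku)\|_{H^{-1}}\|Ku\|_{H^{-1}}\le\|K\|\|Ku\|_{H^{-1}}^2$ via Cauchy-Schwarz in $H^{-1}_{qp}(\Omega)$. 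Dividing by $\|Ku\|_{H^{-1}}^2$ (with the trivial case $Ku=0$ handled separately) delivers (ii).

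For (iii), I would use a Neumann-series argument. The operator $M := \lp - \lo\colon H^1_{qp}(\Omega) \to H^{-1}_{qp}(\Omega)$ is given by $Mu[\varphi] = \int_\Omega(\frac{1}{\eps_1}-\frac{1}{\eps_0})\nabla u\cdot\overline{\nabla\varphi}\,d\bx$ and is controlled by $\eta$ via the same Cauchy-Schwarz argument as in (i). The factorisation $\gp^{-1} = \go^{-1}+M = \go^{-1}(I+\go M)$ on $H^1_{qp}(\Omega)$, together with the hypothesis $\eta\|\go\|_{H^{-1}\to H^1}<1$ forcing $\|\go M\|_{H^1\to H^1}<1$, then gives $\gp = (I+\go M)^{-1}\go$, and the standard Neumann bound $\|(I+\go M)^{-1}\|_{H^1\to H^1}\le(1-\|\go M\|)^{-1}$ produces the stated estimate. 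The main obstacle throughout is really just the careful bookkeeping of constants relating $\|\nabla\cdot\|_{L^2}$ to the $B_0$-induced $H^1$-norm in (i) and (iii), which is routine given the two-sided $L^\infty$-bounds on $\eps_0$ from Assumption~\ref{asseps}; part (ii) is completely clean and requires no such care.
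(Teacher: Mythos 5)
Your proposal is essentially the cited proof: the paper itself gives no argument for this lemma, deferring to \cite[Lemma 5.2]{BHPRW17}, and your three steps --- the form identity $Ku[\varphi]=B_0[\gp u,\varphi]-B_1[\gp u,\varphi]=\int_\Omega\bigl(\tfrac{1}{\eps_0}-\tfrac{1}{\eps_1}\bigr)\nabla \gp u\cdot\overline{\nabla\varphi}\,d\bx$ (precisely the identity the paper records in Remark~\ref{rem:3}), Cauchy--Schwarz for the semi-inner product $\la f,g\ra_\ck=\la Kf,g\ra_{H^{-1}}$, and the factorisation $\gp^{-1}=\go^{-1}(I+\go M)$ with a Neumann series --- reconstruct it faithfully. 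Part (ii) is complete and correct as written: $Ku\in\Ran K\subseteq\ck$ legitimises the choice $w=Ku$, and positive semidefiniteness of the $\ck$-form (imported from \cite[Appendix A]{BHPRW17}) is all that Cauchy--Schwarz requires; alternatively, since $K$ is non-negative and self-adjoint on $H^{-1}_{qp}(\Omega)$, a single line suffices, namely $\norm{Ku}_{H^{-1}}^2=\la K^2u,u\ra_{H^{-1}}\le\norm{K}\la Ku,u\ra_{H^{-1}}=\norm{K}\norm{u}_\ck^2$.

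There is, however, one point you wave through as ``routine bookkeeping of constants'' that does not in fact deliver the constants as stated. The $H^1$-norm here is the $B_0$-induced norm, so the best available comparison is $\norm{\nabla w}_{L^2}^2\le\norm{\eps_0}_\infty\norm{w}_{H^1}^2$; the inequality $\norm{\nabla w}_{L^2}\le\norm{w}_{H^1}$ that your chain in (i) implicitly uses holds only when $\eps_0\le1$ a.e., which Assumption~\ref{asseps} does not grant. Unweighted Cauchy--Schwarz therefore yields $\norm{K}\le\norm{\eps_0}_\infty\norm{\gp}_{H^{-1}\to H^1}\norm{\frac1{\eps_0}-\frac1{\eps_1}}_\infty$, and the sharper weighted variant (insert the weight $1/\eps_0$ in both slots, writing $\frac1{\eps_0}-\frac1{\eps_1}=\eps_0\bigl(\frac1{\eps_0}-\frac1{\eps_1}\bigr)\cdot\frac1{\eps_0}$) yields $\norm{K}\le\norm{1-\eps_0/\eps_1}_\infty\norm{\gp}_{H^{-1}\to H^1}$; either way a fixed factor of order $\norm{\eps_0}_\infty$ survives, and a concentration test (oscillatory $v$ with gradient mass in the perturbed region, constant $\eps_0\gg1$) indicates it cannot be removed, so the constant-free form of (i) really presupposes a normalisation such as $\norm{\eps_0}_\infty\le1$. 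The same factor enters (iii) through the bound $\norm{M}_{H^1\to H^{-1}}\le\eta$, so your Neumann estimate holds with $\eta$ replaced by $\norm{\eps_0}_\infty\eta$. This blemish is arguably inherited from the statement itself (and from \cite[Lemma 5.2]{BHPRW17}) rather than introduced by you, and it is harmless downstream: every later use (Lemma~\ref{lem:unif}, Theorem~\ref{upper}, Corollary~\ref{cor1}) needs only that the bounds depend on $\eps_1$ solely through $\norm{\frac1{\eps_0}-\frac1{\eps_1}}_\infty$, with constants independent of $\eps_1$. But a careful write-up should display this factor explicitly rather than declare the norm comparison routine.
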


\begin{proof}
See \cite[Lemma 5.2]{BHPRW17}.
\end{proof}

Next for $\mu=(\lambda+1)^{-1} \in ((\Lambda_1+1)^{-1}, (\Lambda_0+1)^{-1})$ define
\begin{align}\label{eq:Amu} 
A_\mu:= P(I-\mu \go^{-1})^{-1} K :\ck\to\ck. 
\end{align}

\begin{lemma}\label{lemmavar}
The equation $(\lp-\lambda) u=0$ with $\lambda\in(\Lambda_0,\Lambda_1)$
 has a non-trivial solution $u$ iff $-1$ is an eigenvalue of $A_\mu$, where $\mu=(\lambda+1)^{-1}$.
\end{lemma}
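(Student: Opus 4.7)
The plan is to carry out a Birman--Schwinger-type reduction converting the eigenvalue problem for $\mathfrak{L}_1$ in the gap $(\Lambda_0,\Lambda_1)$ into one for the bounded operator $A_\mu$ on $\mathcal{K}$. Starting from $(\mathfrak{L}_1-\lambda)u=0$ with $\mu=(\lambda+1)^{-1}$, I first rewrite it as $\mathfrak{G}_1 u=\mu u$, apply $\mathfrak{G}_0^{-1}$ on the left, and use $\mathfrak{G}_0^{-1}\mathfrak{G}_1=I+K$ to obtain
\[
(I-\mu\mathfrak{G}_0^{-1})u=-Ku.
\]
Since $\mu\in((\Lambda_1+1)^{-1},(\Lambda_0+1)^{-1})$ lies in the resolvent set of $\mathfrak{G}_0^{-1}$, equation \eqref{logo} gives a bounded inverse $(I-\mu\mathfrak{G}_0^{-1})^{-1}\colon H^{-1}_{qp}(\Omega)\to H^1_{qp}(\Omega)$, and hence the fixed-point identity $u=-(I-\mu\mathfrak{G}_0^{-1})^{-1}Ku$.

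For the forward direction I set $w:=Pu\in\mathcal{K}$ and claim $w$ is a nontrivial eigenvector of $A_\mu$ at $-1$. Applying $P$ to the fixed-point identity produces $w=-P(I-\mu\mathfrak{G}_0^{-1})^{-1}Ku$, and combined with the identity $KPu=Ku$ (discussed below) this is precisely $A_\mu w=-w$. Nontriviality follows by contradiction: if $Pu=0$ then $u\in\mathcal{K}^\perp$, and the fixed-point identity together with $Ku\in\mathcal{K}$ makes $u$ an eigenfunction of $\mathfrak{L}_0$ at $\lambda$, contradicting $\lambda\in\rho(\mathfrak{L}_0)$. For the reverse direction, given $w\in\mathcal{K}\setminus\{0\}$ with $A_\mu w=-w$, I define $u:=-(I-\mu\mathfrak{G}_0^{-1})^{-1}Kw\in H^1_{qp}(\Omega)=D(\mathfrak{L}_1)$. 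Applying $P$ yields $Pu=-A_\mu w=w\neq 0$, so $u\neq 0$, and using $Kw=KPu=Ku$ transforms $(I-\mu\mathfrak{G}_0^{-1})u=-Kw$ into $(I-\mu\mathfrak{G}_0^{-1})u=-Ku$; reversing the algebra of the first paragraph then gives $\mathfrak{G}_1 u=\mu u$, i.e.\ $(\mathfrak{L}_1-\lambda)u=0$.

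The crux of the argument, and the main obstacle, is the algebraic identity $KPu=Ku$ (equivalently $K=KP$, i.e.\ $K$ annihilates $\mathcal{K}^\perp$) that is invoked in both directions. This is nontrivial since $K$ is not self-adjoint on $H^{-1}_{qp}(\Omega)$ (one computes $K^*=\mathfrak{G}_1\mathfrak{G}_0^{-1}-I\neq K$ in general), but it is a consequence of $\mathrm{Ran}(K)\subseteq\mathcal{K}$ together with the Hermitian symmetry of the pairing $\langle K\cdot,\cdot\rangle_{H^{-1}}$ on $\mathcal{K}$ proved in \cite[Appendix A]{BHPRW17}. Since the paper states explicitly that ``For proofs of the results in this section and more details on the reformulation, see \cite[Section 5]{BHPRW17}'', the formal verification reduces to a direct reference to \cite[Lemma 5.3]{BHPRW17}.
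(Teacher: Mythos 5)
Your reduction is the standard Birman--Schwinger argument, which is exactly what the paper's one-line proof outsources to \cite[Lemma 5.3]{BHPRW17}: rewrite $(\lp-\lambda)u=0$ as $\gp u=\mu u$, use $\go^{-1}\gp=I+K$ and the bounded inverse from \eqref{logo} to get $u=-(I-\mu\go^{-1})^{-1}Ku$, project onto $\ck$, and reverse. The algebra in both directions is correct, as is the domain bookkeeping ($(I-\mu\go^{-1})^{-1}$ maps $H^{-1}_{qp}(\Omega)$ onto $H^1_{qp}(\Omega)=D(\lp)$) and the nontriviality arguments (in the forward direction one can even argue more directly: $Pu=0$ gives $Ku=KPu=0$, and then the fixed-point identity forces $u=0$).

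However, your treatment of what you correctly identify as the crux, the identity $K=KP$, contains a genuine error, albeit a repairable one. The parenthetical claim that $K$ is not self-adjoint on $H^{-1}_{qp}(\Omega)$ is false: since $\phi^{-1}Ku=\gp u-\go u$, one computes $\llangle Ku,v\rrangle_{H^{-1}}=B_0[\gp u,\go v]-B_0[\go u,\go v]=B_1[\gp u,\gp v]-B_0[\go u,\go v]$, where the middle step uses $B_0[\gp u,\go v]=\overline{(\phi\go v)[\gp u]}=\overline{v[\gp u]}=\overline{B_1[\gp v,\gp u]}=B_1[\gp u,\gp v]$; both resulting terms are Hermitian forms in $(u,v)$, so the bounded operator $K$ is symmetric on all of $H^{-1}_{qp}(\Omega)$, hence self-adjoint --- a fact this paper itself invokes (``$K$ is symmetric and non-negative in $H^{-1}_{qp}(\Omega)$'', proof of Lemma \ref{codim}). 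Moreover, the substitute justification you offer is insufficient as stated: Hermitian symmetry of the pairing $\llangle K\cdot,\cdot\rrangle_{H^{-1}}$ \emph{on $\ck$} (which is what the reference to \cite[Appendix A]{BHPRW17} supplies, since it concerns the inner product \eqref{defInnerprod} on $\ck$) says nothing about the action of $K$ on $\ck^\perp$; a $2\times 2$ matrix such as $K e_1=e_1$, $Ke_2=e_1$ has $\Ran K\subseteq\ck=\Span\{e_1\}$ and a symmetric pairing on $\ck$, yet $K\vert_{\ck^\perp}\neq 0$. What you actually need is symmetry across $\ck^\perp\times H^{-1}_{qp}(\Omega)$, i.e., genuine self-adjointness, and with the corrected fact the step is immediate: for $v\in\ck^\perp$ and arbitrary $w$, $\llangle Kv,w\rrangle_{H^{-1}}=\llangle v,Kw\rrangle_{H^{-1}}=0$ because $Kw\in\Ran K\subseteq\ck$, whence $Kv=0$ and $K=KP$. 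With this one repair, your proposal is a complete proof along the same lines as the cited one.
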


\begin{proof}
See \cite[Lemma 5.3]{BHPRW17}.
\end{proof}

To be able to use the variational characterisation of eigenvalues we need  the following properties of the operator $A_\mu$.
\begin{prop}  \label{AmuC}
$A_\mu:\ck\to\ck$ is a compact, symmetric operator on $\ck$.
\end{prop}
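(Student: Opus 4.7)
My plan is to prove symmetry and compactness separately, with most of the work going into compactness.

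For symmetry, set $T := (I-\mu \go^{-1})^{-1}$. This is self-adjoint on $H^{-1}_{qp}(\Omega)$ since $\go^{-1} = \lo + I$ is self-adjoint, and $K$ itself is self-adjoint on $H^{-1}_{qp}(\Omega)$ (this symmetry underpins the $\ck$-inner product and is proved in \cite[Appendix A]{BHPRW17}). Consequently $\ker K = (\Ran K)^\perp = \ck^\perp$, so $PK = K$ (since $\Ran K \subseteq \ck$) and $KP = K$ (since $K$ vanishes on $\ck^\perp$). The chain
\begin{align*}
\la A_\mu u, v\ra_\ck
&= \la KPTKu, v\ra_{H^{-1}} = \la KTKu, v\ra_{H^{-1}} = \la TKu, Kv\ra_{H^{-1}}\\
&= \la Ku, TKv\ra_{H^{-1}} = \la Ku, PTKv\ra_{H^{-1}} = \la u, A_\mu v\ra_\ck,
\end{align*}
using in turn $KP=K$, self-adjointness of $K$, self-adjointness of $T$, and $Ku \in \ck$, then yields symmetry.

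For compactness, I would factor $A_\mu = P \circ (TK)$, where $TK : \ck \to H^1_{qp}(\Omega)$ is bounded (by Lemma~\ref{lem12}(ii) and the discussion following \eqref{logo}), and show that $P : H^1_{qp}(\Omega) \to \ck$, viewed through the embedding $H^1 \hookrightarrow H^{-1}$, is compact. For $u \in H^1_{qp}(\Omega)$, using $KP=K$ and orthogonality of $Ku\in\ck$ to $(I-P)u\in\ck^\perp$, one computes
\[
\|Pu\|_\ck^2 = \la KPu, Pu\ra_{H^{-1}} = \la Ku, u\ra_{H^{-1}} = \int_S \Bigl(\tfrac{1}{\eps_0}-\tfrac{1}{\eps_1}\Bigr) \nabla(\gp u)\cdot\overline{\nabla(\phi^{-1}u)}\,d\bx,
\]
with $S := \supp(\eps_1 - \eps_0) \cap \Omega$. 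By Assumption~\ref{asseps} together with the $\hat{\by}$-localization of the defect, $S$ is bounded. For $u_n \rightharpoonup 0$ in $H^1_{qp}(\Omega)$, Rellich on any bounded $\tilde S \supset \overline{S}$ gives $u_n, \gp u_n, \phi^{-1}u_n \to 0$ strongly in $L^2(\tilde S)$. A Caccioppoli-type local energy estimate applied to the weak equations $(\lp+I)\gp u_n = u_n$ and $(\lo+I)\phi^{-1}u_n = u_n$, tested against $\eta^2 \gp u_n$ and $\eta^2 \phi^{-1}u_n$ with a cutoff $\eta \equiv 1$ on $S$, $\supp\eta\subset\tilde S$, then bounds $\|\nabla(\gp u_n)\|_{L^2(S)}$ and $\|\nabla(\phi^{-1}u_n)\|_{L^2(S)}$ in terms of the $L^2(\tilde S)$-norms of $u_n,\gp u_n,\phi^{-1}u_n$, which vanish in the limit. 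Hence the displayed integral tends to zero, so $Pu_n \to 0$ in $\ck$; thus $P|_{H^1_{qp}}$ is compact, and $A_\mu = P \circ (TK)$ is compact as a composition of a compact with a bounded operator.

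The main obstacle is the Caccioppoli step: since $\eps_0,\eps_1$ are merely $L^\infty$ one cannot differentiate them, but the standard Caccioppoli argument only exploits Assumption~\ref{asseps}(i)--(ii), so this is doable. The only additional care needed is to choose $\eta$ compactly supported in the $\hat{\by}$-direction so that both cutoff products remain in $H^1_{qp}(\Omega)$, which is fine since $S$ is localized in $\hat{\by}$ while the quasi-periodicity is in $\hat{\bx}$.
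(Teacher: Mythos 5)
Your proof is correct, and it is worth noting that the paper itself gives no in-text argument here: the proof of Proposition \ref{AmuC} is simply a citation of \cite[Proposition 5.7]{BHPRW17}, so your proposal supplies the content that the citation hides. Your symmetry chain is the standard and correct one: $\ker K=(\Ran K)^\perp=\ck^\perp$ gives $KP=K=PK$, and self-adjointness of $T=(I-\mu\go^{-1})^{-1}$ holds because $\go^{-1}=\lo+1$ is self-adjoint and $1/\mu\in\rho(\lo+1)$. For compactness, your factorization isolates exactly the mechanism any proof must use: the smoothing step $T:H^{-1}_{qp}(\Omega)\to H^1_{qp}(\Omega)$ (recorded in the paper after \eqref{logo}) is indispensable, since the Caccioppoli estimate for $(\lp+1)\gp u_n=u_n$ needs the data $u_n$ to converge strongly in $L^2(\widetilde S)$, which mere $H^{-1}$-boundedness would not provide; and your key identity $\norm{Pu}_\ck^2=\la Ku,u\ra_{H^{-1}}=Ku[\phi^{-1}u]$ checks out via Remark \ref{rem:3} and the definition of the $H^{-1}$ inner product, with $KP=K$ and $Ku\perp(I-P)u$ used correctly. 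Two minor simplifications/observations: (a) only one of the two localized gradients needs to vanish — $\norm{\nabla\phi^{-1}u_n}_{L^2(\Omega)}$ is globally bounded, so a single Caccioppoli estimate for $\gp u_n$ suffices; (b) $(\ck,\norm{\cdot}_\ck)$ need not be complete, but your argument never requires completeness, since weak subsequences are extracted in the genuine Hilbert space $H^1_{qp}(\Omega)$ and Lemma \ref{lem12}(ii) converts $\ck$-bounded sequences into $H^{-1}$-bounded ones before $T$ is applied. What your self-contained route buys, compared with the paper's deferral, is transparency about where each hypothesis enters: ellipticity ($\eps_i\geq c_0$, $\eps_i\in L^\infty$) only through the Caccioppoli constant, and localization of the defect only through the bounded support of $\frac{1}{\eps_0}-\frac{1}{\eps_1}$, which is precisely the dependence structure the rest of the paper (e.g.\ Lemma \ref{lem12}) relies on.
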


\begin{proof}
See \cite[Proposition 5.7]{BHPRW17}.
\end{proof}

\section{Floquet-Bloch theory in $H^{-1}$}\label{sec:FB}

For our results we will make use of Floquet-Bloch theory in $H^{-1}_{qp}(\Omega)$. We introduce the notation and state the results needed here. A fuller account with proofs of some properties of the Floquet-Bloch theory in $H^{-1}_{qp}$ can be found in \cite{BHPW11}. The Brillouin zone in our setting is
the interval $[-\pi,\pi]$. This corresponds to our periodic cell in $\hat \by$-direction which is the interval $[0,1]$.
\begin{defn} \label{defn1}
For all $k$ in the Brioullin zone $[-\pi,\pi] $, we introduce an extension operator
$E_k :L^2((0,1)^2 )\to L^2_{loc}(\Omega)$ with
$$(E_k  u)(x,y+p):=e^{ik p}u(x,y)$$
for all $(x,y)\in (0,1)^2 $, $p\in \Z$.

The partial Floquet transform
$$ U :L^2(\Omega)\to L^2((0,1)^2  \times  [-\pi,\pi] )$$
is defined on functions with compact support by
$$ (U  u)(x,y,k):=\frac{1}{ (2\pi)^{1/2}} \sum_{n \in \Z }e^{ik n} u(x,y-n)\quad \hbox{ for } (x,y)\in (0,1)^2 , k\in [-\pi,\pi] 
$$
and extended to $L^2(\Omega)$ by continuity.
\end{defn}

$U $ is an isometric isomorphism and
\beq\label{inverseFloquet}(U ^{-1} v)(x,y)=\frac{1}{ (2\pi)^{1/2}}\int_{-\pi}^\pi   (E_k  v(\cdot,\cdot, k)) (x,y) dk
\eeq 
(see \cite{KuchmentBook}).

\begin{defn} \label{defn2}
Let $H^1_{qp}((0,1)^2 )$ denote the set of restrictions of functions $u\in H^1_{qp}(\Omega)$ to $(0,1)^2$ endowed with the $H^1$-inner product.
For all $k \in [-\pi,\pi] ,$ let
$$\ch_k:=\{ u\in H^1_{qp}((0,1)^2 ): E_k  u \in H^1_{loc}(\Omega)\}. $$
Note that being an element of $\ch_k$ requires a weak form of semi-periodic boundary conditions on the boundary of $(0,1)^2 $.
We denote by $N_k$ the mapping
$$ N_k:\ch_0\to\ch_k, \quad (N_ku)(x,y):=e^{iky} u(x,y)$$
and extend it to a mapping  $\ch_0'\to\ch_k'$ between the dual spaces by
$$  
N_k u[\varphi]:= u[N_{k}^{-1}\varphi]  \hbox{ for all } u\in\ch_0', \varphi\in \ch_k.
$$
Let
\begin{align*}
\ch&=\left\{u\in L^2((0,1)^2  \times [-\pi,\pi] ): \forall' k \in [-\pi,\pi] \;\;\;u(\cdot,\cdot, k)\in \ch_k,\right.\\
&\quad\text{the mapping}\quad \left\{\begin{array}{l} [-\pi,\pi] \to\C\\ k\mapsto \langle N_k^{-1} u(\cdot,\cdot,k), \varphi\rangle_{H^1((0,1)^2 )}  \end{array}\right\} \text{is measurable for all } \varphi\in\ch_0,\\
&\;\;\;\left.\text{and}\;\;\norm{u}_{\ch}<\infty\right\}
\end{align*}
where, as usual, $\forall' k$ means for almost all $k$ and  the norm $\norm{\cdot}_{\ch}$ is induced by the inner product
$$ \langle u,v\rangle_\ch = \int_{-\pi}^\pi   \langle u(\cdot,\cdot,k),v(\cdot,\cdot,k)\rangle_{H^1((0,1)^2 )} dk.$$
$\ch$ can be viewed as the space of all functions $u(x,y,k)=(N_k
v(k))(x,y)$ with $v\in L^2([-\pi,\pi] ,\ch_0)$. 
By $\phi_k:\ch_k\to\ch'_k$  and $\phi_\ch:\ch\to\ch'$ we denote the canonical isometric isomorphisms (defined analogously to \eqref{eq:star}).
\end{defn}

\begin{remark}
$\mathcal{H}$ can also be defined as the direct integral of the $\mathcal{H}_k$, which are then regarded as fibers over $k\in [-\pi, \pi]$ (see e.g. \cite{RS}).
\end{remark}
Analogously to \eqref{eq:phi}, we get
\beq\label{eq:phik} \llangle u, v\rrangle _{\ch_k'} = \llangle \phi_k^{-1} u, \phi_k^{-1} v\rrangle _{\ch_k} = \llangle u, \phi_k^{-1} v\rrangle _{L^2}\quad \hbox{for} \quad u\in L^2((0,1)^2), v\in \ch_k'.\eeq

   Let $V$ be given by $V:=U\vert_{H^1_{qp}(\Omega)}$.
   For $u,v\in H^1_{qp}(\Omega)$ we have $V  u, V  v\in\ch$, and
   $$\int_{-\pi}^\pi  b_0 [V  u(\cdot,\cdot,k),V  v(\cdot,\cdot,k)]dk=B_0[u,v],$$
   where $b_0$ is defined as $B_0$ in \eqref{Bo} with the range of integration $\Omega$ replaced by $[0,1]^2$ (see \cite[Theorem 3.7]{BHPW11}). The form $b_0$ induces the inner product on the space $H_{qp}^1((0,1)^2)$ as well as on $\ch_k$ giving
   \ben \label{viso} \langle V  u, V  v\rangle_{\ch}= \langle  u,  v\rangle_{H^1(\Omega)}.\een
   Moreover, $V  :H^1_{qp}(\Omega)\to \ch$ is an isometric isomorphism (see \cite[Theorem 3.8]{BHPW11}), whence also its adjoint  $V^*:\ch'\to H^{-1}_{qp}(\Omega)$ is. In particular, $\ch$ is a Hilbert space. 
  The map $$\hat V  :=(V ^*)^{-1}:H^{-1}_{qp}(\Omega)\to \ch'$$ is an isometric isomorphism and $\hat V  \mid_{L^2(\Omega)}=U $ (see \cite[Lemma 3.9]{BHPW11}).
 For $k\in [-\pi,\pi] $, let $\ch_k$ be the domain of the operator $L_k$  defined in $\ch'_k$ by
    $$L_k:\ch_k\subseteq  \ch'_k\to \ch'_k,\;\;\;L_k u=\phi_k u -u.$$ 
    This means that (cf.~\eqref{eq:star})
    $$\langle (L_k+1) u,\varphi\rangle =b_0 [u,\varphi]\hbox{ for } u,\varphi\in\ch_k. $$
Note that $\ch_k$ is dense in $\ch_k'$ since $\ch_k$ is dense in $L^2((0,1)^2 )$ and thus, by duality $L^2((0,1)^2 )$ is dense in $\ch_k'$.
Analogously to the case of $\lo$, the operator $L_k$ is self-adjoint. $(L_k+1)^{-1}$ is compact since it is bounded from $\ch_k'$ to $\ch_k$, which is compactly embedded in $\ch_k'$. 

It is possible to transform the spectral problem for the operators $L_k$ which have $k$-dependent domains  to a spectral problem for an operator family where the $k$-dependence is transferred to the differential expression (see, e.g.~\cite{BHPW15}, for the transformation in a similar situation). This family is analytic of type (A) in the sense of Kato and using \cite[Theorem VII.3.9 and Remark VII.3.10]{Kato}, we can obtain sequences of real-valued functions $\{\lambda_s(k)\}_{s\in \N}$ and eigenfunctions $\{\varphi_s(k)\}_{s\in \N}$, normalized in $\ch_k'$. 
The functions  $\lambda_s(k)$ and  $\varphi_s(k)$ are all real-analytic functions in the variable $k$ on $[-\pi,\pi]$ and are such that 
\begin{align}\label{def_band_functions}
    (L_k+1) \varphi_s(k)=\lambda_s(k)\varphi_s(k).
\end{align}
We note that the eigenvalues are not necessarily ordered by magnitude. 
We call the functions $\lambda_s(k)$ the band functions and $\varphi_s(k)$  the Bloch functions.

Throughout, we will need to make the following non-degeneracy assumption on the band functions:
\begin{assumption}\label{asnc}
The band functions $\lambda_s$ are not constant as functions of  $k\in[-\pi,\pi]$.
\end{assumption}

For notational convenience, we also introduce 
$$\psi_s(\cdot,k):=\frac{1}{\sqrt{\lambda_s(k)+1}}\ \varphi_s(\cdot,k).$$
The set $\{\psi_s(\cdot,k) \}$  forms an orthonormal set in $L^2((0,1)^2)$, which is also complete as the set of eigenfunctions of the self-adjoint realisation of the operators in $L^2((0,1)^2)$. As a general rule, we will always extend the $\psi_s(k), \varphi_s(k)$ to the whole of $\Omega$ in a $k$-quasiperiodic manner, i.e.
$$
\psi_s(\cdot + m \hat{\by}, k) = e^{i k m}\psi_s(\cdot, k).
$$
In what follows,
for $f\in H^{-1}_{qp}(\Omega)$ we denote by $(\hat{V}f)_k$  the element of
$\ch_k'$,  defined by \beq\label{notation}
[ (\hat{V}f)_k][\varphi] :=\langle
(\phi_\ch^{-1} \hat V  f)(\cdot,k),\varphi\rangle
_{\ch_k}\hbox{ for } \varphi \in \ch_k.
\eeq

\begin{lemma}\label{lem:notation}
For almost all $k\in[-\pi,\pi]$ and $f\in H^{-1}_{qp}(\Omega)$
 \begin{equation}\label{Vhat}
 \phi_k^{-1}(\hat{V}f)_k=(\phi_\ch^{-1} \hat V  f)(\cdot,k).
 \end{equation}
\end{lemma}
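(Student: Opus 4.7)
The statement is essentially a compatibility assertion between three canonical isomorphisms: the global $\phi_\ch:\ch\to\ch'$, the fiberwise $\phi_k:\ch_k\to\ch_k'$, and the Floquet transform $\hat V$ on the dual side. My plan is to verify it by unwinding the definition of $(\hat Vf)_k$.

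Fix $f\in H^{-1}_{qp}(\Omega)$ and set $u:=\phi_\ch^{-1}\hat Vf\in\ch$. By the definition of $\ch$, there is a representative with $u(\cdot,k)\in\ch_k$ for almost every $k\in[-\pi,\pi]$, so the right-hand side $(\phi_\ch^{-1}\hat Vf)(\cdot,k)=u(\cdot,k)$ is a well-defined element of $\ch_k$ for such $k$. The functional $(\hat Vf)_k\in\ch_k'$ was defined in \eqref{notation} precisely by
\[
[(\hat Vf)_k][\varphi]=\langle u(\cdot,k),\varphi\rangle_{\ch_k}\qquad\text{for all }\varphi\in\ch_k.
\]
On the other hand, the canonical isomorphism $\phi_k$ is characterized by the identity $[\phi_k w][\varphi]=\langle w,\varphi\rangle_{\ch_k}$ for $w\in\ch_k$ and $\varphi\in\ch_k$. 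Comparing the two relations term by term yields $(\hat Vf)_k=\phi_k\,u(\cdot,k)$, and applying $\phi_k^{-1}$ gives \eqref{Vhat}.

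The only subtle point — and the part I would want to state carefully rather than the calculation itself — is that everything is to be understood modulo a null set in $k$. Since $u\in\ch$ is only defined as an equivalence class of measurable fiber-valued functions, the identity $u(\cdot,k)\in\ch_k$, the defining formula for $(\hat Vf)_k$, and the measurability requirement in the definition of $\ch$ (that $k\mapsto\langle N_k^{-1}u(\cdot,k),\varphi\rangle_{H^1}$ be measurable for each $\varphi\in\ch_0$) are all properties that hold for a.e.\ $k$. This is exactly why the conclusion is phrased as holding for almost all $k$.

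In short, the proof is a one-line unwinding: $(\hat Vf)_k$ is by construction the image under $\phi_k$ of the fiber $u(\cdot,k)$ of $\phi_\ch^{-1}\hat Vf$. I do not anticipate any real obstacle beyond being pedantic about the a.e.\ qualifier; no estimates or function-space arguments are needed once the definitions are lined up.
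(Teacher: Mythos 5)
Your proof is correct and is essentially identical to the paper's: the paper also just unwinds the definition \eqref{notation} of $(\hat Vf)_k$ together with the defining property of $\phi_k$, testing both sides against an arbitrary $w\in\ch_k$, which is the same computation you present as $(\hat Vf)_k=\phi_k\,u(\cdot,k)$. Your added remark about the almost-everywhere qualifier is accurate but not something the paper dwells on.
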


\begin{proof}
Let $w\in\ch_k$. Then
\begin{equation*}
    \langle \phi_k^{-1}(\hat{V}f)_k,w\rangle_{\ch_k} = (\hat{V}f)_k[w]= \langle (\phi_\ch^{-1} \hat V  f)(\cdot,k),w\rangle_{\ch_k},
\end{equation*}
which proves the identity.
\end{proof}
Having introduced the required notation, we are now able to state the results on expansions of functions in terms of the Bloch waves needed for this paper. The proofs can be found in Appendix \ref{FB}.

\begin{prop}\label{prop:FB} 
\begin{enumerate}
	\item\label{spectra} $\sigma(\lo)=\overline{  \cup_k  \sigma(L_k)}$.
    \item\label{l1} For $f \in H^{-1}_{qp}(\Omega)$ and $ \lambda \not \in  \sigma(\lo)$,  
    \beq
    (V(\lo-\lambda)^{-1} f ) (x,k)=(L_k-\lambda)^{-1} [(\hat{V}f)_k] (x) \label{*}.
    \enq
holds.
\item \label{resexpansionlemma} For   $g \in H^{-1}_{qp}(\Omega)$ and $ \lambda \not \in  \sigma(\lo)$ the equality
$$ (\lo-\lambda)^{-1}g = \frac1{\sqrt{2\pi}}\sum_{s=1}^\infty \int_{-\pi}^\pi \dfrac1{\lambda_s(k)-\lambda} ( \hat V g)_ k[\psi_s(\cdot, k)] \psi_s( :,k) dk$$
holds, where the series converges in $L^2(\Omega)$.
\item \label{RQ} For $\mu \in  ((\Lambda_1+1)^{-1}, (\Lambda_0+1)^{-1})$ and $u\in H^{-1}_{qp}(\Omega)$, 
\bea &&
\llangle -\dfrac1\mu \left(\lo+1-\dfrac1\mu\right)^{-1} (K u), Ku\rrangle_{H^{-1}} \\
  &&=\int_{-\pi}^\pi  \sum_{s=1}^\infty\dfrac1{1-\mu(\lambda_s(k)+1)}     \dfrac1{\lambda_s(k)+1} \left|  \llangle\psi_s(k),\phi^{-1}_k (\hat V K u)_k\rrangle_{H^1(0,1)^2}\right|^2 dk.
  \eea
	\item \label{expansionlemma}
For $f\in H^{-1}_{qp}(\Omega)$, 
\beq
\norm{f}_{H^{-1}}^2 = \frac{1}{\sqrt{2\pi}} \sum_{s=1}^\infty\int_{-\pi}^\pi \frac{1}{\lambda_s(k)+1} \left| \la \phi_k^{-1}(\hat{V} f)_k, \psi_s(k)\ra_{H^1((0,1)^2)}  \right|^2~dk.
\eeq
\end{enumerate}
\end{prop}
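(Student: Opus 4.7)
The strategy is to realise $\hat V:H^{-1}_{qp}(\Omega)\to\ch'$ as the unitary diagonalising $\lo$ into the direct integral $\int^\oplus L_k\,dk$; from this, items \ref{spectra}, \ref{l1} and \ref{expansionlemma} fall out of standard direct-integral machinery, while \ref{resexpansionlemma} and \ref{RQ} follow by expanding on each fibre in the $\ch_k'$-orthonormal eigenbasis $\{\varphi_s(k)\}$ of $L_k$.

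The key preliminary step I would establish is the fibrewise identity
\[
(\hat V(\lo+1)u)_k=(L_k+1)[(Vu)(\cdot,k)]\quad\text{in }\ch_k'\text{ for a.e.\ }k.
\]
To prove it, I pair both sides with a test element $\varphi\in\ch_k$ and invoke the form identity $B_0[u,v]=\int_{-\pi}^\pi b_0[Vu(\cdot,k),Vv(\cdot,k)]\,dk$ together with Lemma \ref{lem:notation}, after first checking (by duality) that $\ch'$ is naturally the direct integral $\int^\oplus\ch_k'\,dk$. Once this is in hand, part \ref{l1} follows because $(L_k-\lambda)^{-1}$ is uniformly bounded on compact subsets of the resolvent set of $\lo$, and part \ref{spectra} follows by the standard Weyl-sequence construction: for $\lambda\in\sigma(L_{k_0})$ I localise an approximate fibre eigenfunction of $L_{k_0}$ against a narrow bump in $k$ to produce Weyl sequences for $\lo$, while the uniform bound above rules out points outside $\overline{\cup_k\sigma(L_k)}$. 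For part \ref{expansionlemma} I write $\|f\|_{H^{-1}}^2=\int\|(\hat Vf)_k\|_{\ch_k'}^2\,dk$, expand in the $\ch_k'$-ONB $\{\varphi_s(k)\}$, and then convert through $\varphi_s=\sqrt{\lambda_s+1}\,\psi_s$ and the identity $\langle u,\varphi_s\rangle_{\ch_k'}=(\lambda_s+1)^{-1}\langle\phi_k^{-1}u,\varphi_s\rangle_{\ch_k}$ (a consequence of $\phi_k\varphi_s=(\lambda_s+1)\varphi_s$).

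For part \ref{resexpansionlemma} I use \ref{l1} to move the resolvent into the fibres and apply the fibrewise spectral expansion
\[
(L_k-\lambda)^{-1}[(\hat Vg)_k]=\sum_{s=1}^\infty\frac{(\hat Vg)_k[\psi_s(\cdot,k)]}{\lambda_s(k)-\lambda}\,\psi_s(\cdot,k)\quad\text{in }L^2((0,1)^2),
\]
obtained by combining the $\ch_k'$-ONB expansion of $(L_k-\lambda)^{-1}(\hat Vg)_k$ with the relations $\langle(\hat Vg)_k,\varphi_s\rangle_{\ch_k'}=(\lambda_s+1)^{-1/2}(\hat Vg)_k[\psi_s]$ and $\langle w,\psi_s\rangle_{L^2}=\sqrt{\lambda_s+1}\,\langle w,\varphi_s\rangle_{\ch_k'}$, and then apply the inverse Floquet transform \eqref{inverseFloquet}. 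Part \ref{RQ} is then the $H^{-1}$-pairing of \ref{resexpansionlemma} (with $g=Ku$ and $\lambda=1/\mu-1$) against $Ku$: the direct-integral splitting of the pairing produces $\int\sum_s(\lambda_s-\lambda)^{-1}|\langle(\hat VKu)_k,\varphi_s\rangle_{\ch_k'}|^2\,dk$, and the elementary identity $-\mu^{-1}/(\lambda_s-\lambda)=1/(1-\mu(\lambda_s+1))$ combined with $|\langle u,\varphi_s\rangle_{\ch_k'}|^2=(\lambda_s+1)^{-1}|\langle\phi_k^{-1}u,\psi_s\rangle_{H^1((0,1)^2)}|^2$ assembles into the stated formula.

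The principal technical obstacle is the identification of $\ch'$ with $\int^\oplus\ch_k'\,dk$ in the negative-space setting: since $\hat V=(V^*)^{-1}$ is defined by duality, the measurability structure built into $\ch$ has to be transferred consistently to $\ch'$, and the $\ch_k'$-pairing must be carefully distinguished from the $L^2$-pairing via $\phi_k$ and \eqref{eq:phik}. This bookkeeping, carried out in Appendix \ref{FB}, is the only non-routine input; with it in place, all five assertions reduce to the algebraic manipulations outlined above.
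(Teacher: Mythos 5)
Your proposal is correct in substance but takes a genuinely different route from the paper's appendix. The paper does not build the direct-integral machinery at all: parts (1) and (2) are simply quoted from \cite{BHPW11} (Theorems 4.3 and 4.7 and the proof of Theorem 4.3), part (3) is obtained by applying the known $L^2$-Bloch decomposition $f=\frac1{\sqrt{2\pi}}\sum_s\int\la Uf,\psi_s\ra_{L^2}\,\psi_s\,dk$ to $(\lo-\lambda)^{-1}g$ and then converting $\la (L_k-\lambda)^{-1}(\hat Vg)_k,\psi_s\ra_{L^2}$ via \eqref{eq:phik} and the eigenvalue relation, and parts (4) and (5) are deduced from (3) by pairing against $Ku$ (resp.\ $\phi^{-1}f$), with a fairly laborious justification of the interchange of $\sum_s$ and $\int dk$ through partial sums $\chi_l$, Bessel's inequality, Fubini and dominated convergence. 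Your route --- the intertwining identity $(\hat V(\lo+1)u)_k=(L_k+1)[(Vu)(\cdot,k)]$, the identification $\norm{F}_{\ch'}^2=\int_{-\pi}^\pi\norm{F_k}_{\ch_k'}^2\,dk$ (which indeed follows from the definition of $\la\cdot,\cdot\ra_\ch$, \eqref{notation} and Lemma \ref{lem:notation}), and fibrewise Parseval in the $\ch_k'$-orthonormal basis $\{\varphi_s(k)\}$ --- buys a cleaner interchange argument: the fibrewise series for (4) converges absolutely, dominated by $C\norm{(\hat VKu)_k}_{\ch_k'}^2\in L^1(-\pi,\pi)$ because $|\lambda_s(k)-\lambda|\ge\dist(\lambda,\sigma(\lo))>0$ uniformly, so Tonelli replaces the paper's $\chi_l$-argument. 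The price is that you re-prove (1) and (2), which the paper outsources; your sketch works provided you respect the logical order --- first the Weyl-sequence inclusion $\overline{\cup_k\sigma(L_k)}\subseteq\sigma(\lo)$, which is what guarantees that $(L_k-\lambda)^{-1}$ exists with the uniform bound $\dist(\lambda,\sigma(\lo))^{-1}$ for $\lambda\notin\sigma(\lo)$, then (2), and only then the reverse inclusion from the fibred resolvent --- otherwise the argument is circular.

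One discrepancy you should not pass over silently: your Parseval computation for (5) yields $\norm{f}_{H^{-1}}^2=\int_{-\pi}^\pi\sum_s(\lambda_s(k)+1)^{-1}\bigl|\la\phi_k^{-1}(\hat Vf)_k,\psi_s(k)\ra_{H^1((0,1)^2)}\bigr|^2dk$ with \emph{no} prefactor, whereas the proposition as stated carries a factor $\frac1{\sqrt{2\pi}}$. Your constant is in fact the correct one: in the paper's appendix, the step rewriting $\llangle\frac1{\sqrt{2\pi}}\int E_k\tilde\chi\,dk,\phi^{-1}f\rrangle_{H^1}$ as $\frac1{\sqrt{2\pi}}\llangle V^{-1}\tilde\chi,\phi^{-1}f\rrangle_{H^1}$ counts the normalization twice, since by \eqref{inverseFloquet} the expression $\frac1{\sqrt{2\pi}}\int E_k\tilde\chi\,dk$ already \emph{equals} $V^{-1}\tilde\chi$ (compare the paper's own treatment of (4), where $\frac1{\sqrt{2\pi}}\int E_k\chi_l\,dk=U^{-1}\chi_l$ is used correctly and no prefactor survives in the final formula --- consistent with what your fibrewise pairing gives for (4)). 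The slip is harmless downstream, as the constant is absorbed into $C_\mu$ in Lemma \ref{lem:unif} and only rescales the bound in Lemma \ref{lem8n}, but a blind derivation like yours must flag the mismatch explicitly, or it will appear to fail to prove the identity as stated.
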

We refer to Figure \ref{fig1} for an overview of the spaces and mappings discussed here.

\begin{center}
\begin{figure}[h]
\begin{tikzcd}
 H^1_{qp}(\Omega) \arrow[r, hook] \arrow[rr, bend left, "\phi=\mathfrak{L}_0+1"] \arrow[to=5-1, "V"] & L^2(\Omega) \arrow[r, hook] & H_{qp}^{-1}(\Omega) \arrow[ll, bend left, "\phi^{-1}=(\mathfrak{L}_0+1)^{-1}"] \arrow[to=5-3, xshift = 1.5ex, "\hat{V}"] \arrow[loop right, "K"]\\
& & &\\
& & &\\
& & &\\
\mathcal{H} \arrow[r, hook] \arrow[rr, bend left, "\phi_{\mathcal{H}}"] & L^2((0,1)^2) \arrow[r, hook] & \mathcal{H}' \arrow[ll, bend left, "\phi^{-1}_{\mathcal{H}}"] \arrow[to=1-3, "V^{*}"]\\
& & &\\
& & &\\
& & &\\
 \mathcal{H}_k \arrow[r, hook] \arrow[rr, bend left, "\phi_k=L_k+1"] \arrow[to=5-1, "\displaystyle\int^\bigoplus_{[-\pi, \pi]}~dk"] & L^2((0,1)^2) \arrow[r, hook] & \mathcal{H}_{k}' \arrow[ll, bend left, "\phi^{-1}_k"] \arrow[to=5-3, "~~~\displaystyle\int^\bigoplus_{[-\pi, \pi]}~dk"']
\end{tikzcd}
\label{fig1}\caption{Spaces, isomorphisms and key mappings. The symbol $\int^\bigoplus_{[-\pi, \pi]}~dk$ indicates the forming of a direct integral of the $\mathcal{H}_k$.}
\end{figure}
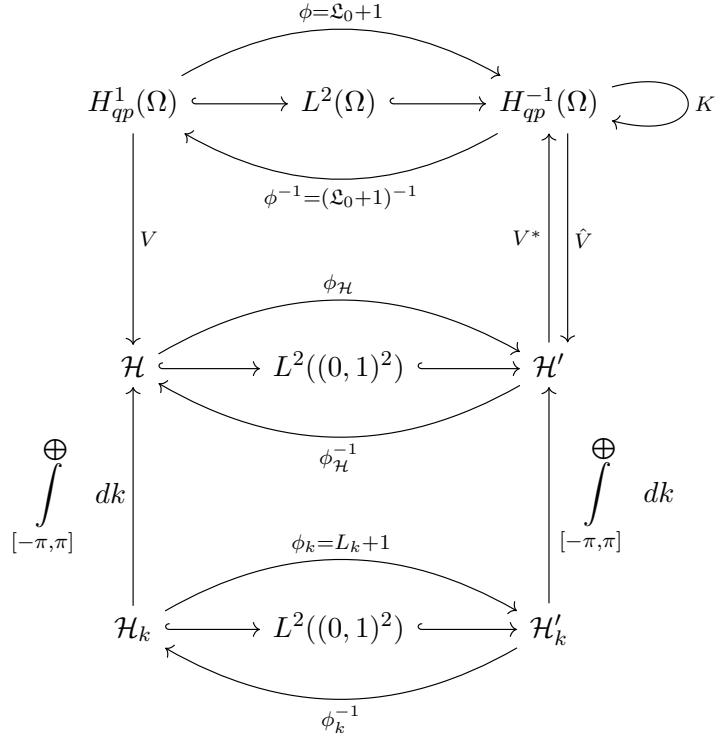
\end{center}

\section{Preparatory results}\label{sec_Rayleigh_Quotient}

Our strategy consists in following $\kappa_m(\mu)$, the $m$-th lowest negative eigenvalue (if it exists) of the
operator $A_\mu$, introduced in \eqref{eq:Amu}, as $\mu$ varies. 
The following standard variational characterisations (see, for example, \cite{Heuser}) hold:
\beq\label{defkappan}
\kappa_m(\mu) = \max_{\codim L = m-1}\ \inf_{\phi\in L} \frac{\la \phi, A_\mu \phi \ra_\ck }{\la\phi, \phi \ra_\ck } = \min_{\dim L =m} \ \max_{\phi\in L} \frac{\la \phi, A_\mu \phi \ra_\ck }{\la\phi, \phi \ra_\ck }.
\eeq
\begin{lemma}\label{lem:cont} For $\mu$ in the spectral gap $\left((\Lambda_1+1)^{-1},(\Lambda_0+1)^{-1}\right)$, the mapping
$\mu\mapsto \kappa_m(\mu)$ is continuous and increasing.
\end{lemma}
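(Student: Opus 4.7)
The plan is to read off both assertions from the min--max characterisation \eqref{defkappan}. Concretely, I want to establish (a) for every $\phi\in\ck$ with $\la\phi,\phi\ra_\ck>0$, the Rayleigh quotient $\mu\mapsto \la\phi,A_\mu\phi\ra_\ck/\la\phi,\phi\ra_\ck$ is increasing in $\mu$, and (b) the map $\mu\mapsto A_\mu$ is continuous in the operator norm on $\ck$. Given (a), monotonicity of $\kappa_m(\mu)$ follows immediately from \eqref{defkappan} by taking $\max$ and $\inf$ of a pointwise increasing family; given (b), continuity of $\kappa_m(\mu)$ follows from the standard stability bound $|\kappa_m(\mu)-\kappa_m(\mu_0)|\le \|A_\mu-A_{\mu_0}\|$, which is itself a direct consequence of \eqref{defkappan}.

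For (a), since $K\phi\in\ck=\overline{\Ran K}$ and $P$ is the orthogonal projection onto $\ck$, one has
\begin{align*}
\la\phi,A_\mu\phi\ra_\ck = \la K\phi,\,A_\mu\phi\ra_{H^{-1}} = \la K\phi,\,(I-\mu\go^{-1})^{-1}K\phi\ra_{H^{-1}}.
\end{align*}
Using the identity $(I-\mu\go^{-1})^{-1}=-\tfrac{1}{\mu}(\lo+1-\tfrac{1}{\mu})^{-1}$ from \eqref{logo}, Proposition \ref{prop:FB}(\ref{RQ}) applied with $u=\phi$ then yields
\begin{align*}
\la\phi,A_\mu\phi\ra_\ck = \int_{-\pi}^\pi\sum_{s=1}^\infty \frac{\bigl|\la\psi_s(k),\phi_k^{-1}(\hat V K\phi)_k\ra_{H^1((0,1)^2)}\bigr|^2}{[1-\mu(\lambda_s(k)+1)](\lambda_s(k)+1)}\,dk.
\end{align*}
A direct computation with $\alpha_s(k):=\lambda_s(k)+1>0$ gives
\begin{align*}
\partial_\mu\Bigl[\frac{1}{(1-\mu\alpha_s(k))\alpha_s(k)}\Bigr]=\frac{1}{(1-\mu\alpha_s(k))^2}>0,
\end{align*}
so each summand is strictly increasing in $\mu$. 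Crucially, the sign of $1-\mu\alpha_s(k)$ itself is not controlled (since $\alpha_s(k)$ may lie on either side of $1/\mu$), but the square in the derivative renders this harmless. Term-by-term differentiation is legitimate on any compact subinterval of the gap, because the spectral gap condition forces $|1-\mu\alpha_s(k)|\ge c>0$ uniformly in $s,k$, and a straightforward comparison reduces the resulting series to the one controlled by Proposition \ref{prop:FB}(\ref{expansionlemma}).

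For (b), I sidestep the unboundedness of $\go^{-1}=\lo+1$ via the identity
\begin{align*}
(I-\mu\go^{-1})^{-1}-(I-\mu_0\go^{-1})^{-1} = \frac{\mu-\mu_0}{\mu_0}(I-\mu\go^{-1})^{-1}\bigl[(I-\mu_0\go^{-1})^{-1}-I\bigr],
\end{align*}
which only involves the bounded operators $(I-\mu\go^{-1})^{-1}$ and $(I-\mu_0\go^{-1})^{-1}$, and hence establishes operator-norm continuity of $\mu\mapsto (I-\mu\go^{-1})^{-1}$. Composing with the fixed bounded $K$ and $P$ then gives norm continuity of $\mu\mapsto A_\mu$. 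The main obstacle is the interchange of differentiation with the Bloch sum and integral in step (a); this is ultimately powered by the gap condition keeping $1/\mu$ bounded away from every value of every band function on compact subintervals of $((\Lambda_1+1)^{-1},(\Lambda_0+1)^{-1})$.
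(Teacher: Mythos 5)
Your argument is correct in substance, but it takes a genuinely different route from the paper's. The paper offers no proof at all here: it defers to Lemma 6.1 of \cite{BHPW15}, where the argument is operator-theoretic (monotonicity from positivity of $\go^{-1}$ via the derivative of the resolvent $(I-\mu\go^{-1})^{-1}$, continuity from norm-continuity of $\mu\mapsto A_\mu$) and does not pass through the Bloch fibre expansion. You instead read monotonicity off Proposition \ref{prop:FB} \eqref{RQ} --- i.e.\ precisely the identity \eqref{Rayleigh} that the paper only deploys later, in the proof of Theorem \ref{upper}. Your reduction $\la\phi,A_\mu\phi\ra_\ck=\la K\phi,(I-\mu\go^{-1})^{-1}K\phi\ra_{H^{-1}}$ is valid (since $K\phi\in\Ran K\subseteq\ck$ and $P$ is orthogonal), the derivative computation $(1-\mu\alpha_s(k))^{-2}>0$ is right, and your two uniform-gap observations --- $|1-\mu\alpha_s(k)|\geq c>0$ uniformly in $(s,k)$ on compact subintervals (which follows from Corollary \ref{cor:1}) and domination of the differentiated series by the one in Proposition \ref{prop:FB} \eqref{expansionlemma} --- do legitimize term-by-term differentiation. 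You could in fact avoid differentiation entirely: for $\mu_0<\mu$ the difference of summands equals $(\mu-\mu_0)\bigl[(1-\mu\alpha)(1-\mu_0\alpha)\bigr]^{-1}\geq0$, because $1-\mu\alpha$ and $1-\mu_0\alpha$ have the same sign throughout the gap for each band. Your resolvent identity $(I-\mu\go^{-1})^{-1}-(I-\mu_0\go^{-1})^{-1}=\frac{\mu-\mu_0}{\mu_0}(I-\mu\go^{-1})^{-1}\bigl[(I-\mu_0\go^{-1})^{-1}-I\bigr]$ is correct and cleanly sidesteps the unboundedness of $\go^{-1}$. What your route buys is a proof self-contained within this paper's stated machinery; what the cited route buys is brevity and independence from the expansion results of Section \ref{sec:FB}.

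One step needs patching. The stability bound $|\kappa_m(\mu)-\kappa_m(\mu_0)|\le\norm{A_\mu-A_{\mu_0}}$ must be taken in the operator norm of $(\ck,\la\cdot,\cdot\ra_\ck)$, since the min--max \eqref{defkappan} lives in that inner product; but your composition argument only yields continuity in the $H^{-1}$-operator norm, and the two norms are inequivalent ($\norm{u}_\ck\le\norm{K}^{1/2}\norm{u}_{H^{-1}}$ holds, the converse fails since $K$ is not boundedly invertible). The repair is one line via Lemma \ref{lem12}(ii): writing $R(\mu):=(I-\mu\go^{-1})^{-1}$, for $\phi\in\ck$ one has $|\la\phi,(A_\mu-A_{\mu_0})\phi\ra_\ck|=|\la K\phi,(R(\mu)-R(\mu_0))K\phi\ra_{H^{-1}}|\le\norm{R(\mu)-R(\mu_0)}\,\norm{K\phi}^2_{H^{-1}}\le\norm{K}\,\norm{R(\mu)-R(\mu_0)}\,\norm{\phi}^2_\ck$, and for symmetric operators this $\ck$-numerical-radius bound controls all the min--max values in \eqref{defkappan}. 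Finally, if ``increasing'' is meant strictly (as in \cite{BHPW15}), add that $Ku\neq0$ for $0\neq u\in\ck$ (otherwise $\norm{u}_\ck=0$), so each Rayleigh quotient is strictly increasing, and strictness passes to $\kappa_m$ through the minimum over finite-dimensional subspaces in \eqref{defkappan}, where the maximum over the unit sphere is attained.
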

The proof is virtually identical to that of Lemma 6.1 in \cite{BHPW15}. 
We remind the reader that $\Lambda_1$ is the lowest point of a spectral band and lies at the top edge of a gap. The solutions of the equation $\lambda_s(k)=\Lambda_1$ will play an important role in our analysis. We first introduce the following sets:
\bea
&\Sigma=\{ (s,{ k}) \in\N\times[-\pi,\pi]: \lambda_{s}(k)=\Lambda_1\},\\
&S_{ k}=\{s\in\N:(s,{ k})\in \Sigma\}, \\
&S=\{ s\in\N :\text{   there\;is\;a\;} { k}\;  \text{with \;} (s,{k})\in \Sigma \} = \bigcup_{ k} S_{k}.
\eea
We will next see that the set $\Sigma$ is finite. In the following we denote the elements of $\Sigma$ by $(s_j,k_j), j=1,...,n$ and set $\psi_j=\psi_{s_j}(k_j)$ and $\varphi_j=\varphi_{s_j}(k_j)$. 

\begin{lemma}\label{finite}
The set $\Sigma$ is a non-empty finite set. Moreover, $\lambda_s( k)\to \infty$ as $s\to\infty$, uniformly in $k\in [-\pi,\pi].$
\end{lemma}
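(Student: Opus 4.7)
The three assertions—non-emptiness, finiteness, and uniform divergence—are most naturally proved in reverse order: I will first establish the uniform divergence and then deduce the other two. For the uniform divergence, the standard strategy is to move from the $k$-dependent domains $\ch_k$ to a common domain via the unitary transformation $u\mapsto e^{-iky}u$, which is the device (implicit in the paper just before Assumption \ref{asnc}) that realizes the family as analytic of type~(A) in the sense of Kato. After this transformation, the quadratic form underlying $L_k+1$ becomes
\[
\int_{(0,1)^2}\!\left(\tfrac{1}{\eps_0}|\nabla v+ik\hat\by v|^2+|v|^2\right)d\bx,
\]
which depends polynomially on $k\in[-\pi,\pi]$ and is uniformly coercive on the fixed form domain. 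Combined with compactness of the resolvent (through the compact embedding of the form domain into its dual), this gives norm-continuity of $k\mapsto (L_k+1+C)^{-1}$ for sufficiently large $C$, so that the eigenvalues reordered in increasing order, $\nu_1(k)\le\nu_2(k)\le\cdots$, are continuous functions of $k$. I then argue by contradiction: if $\min_k\nu_s(k)\not\to\infty$, one extracts $s_j\to\infty$ and $k_j\to k^*$ with $\nu_{s_j}(k_j)\le M$. Choosing $M'\in(M,M+1)$ outside the (discrete) spectrum of $L_{k^*}+1$, norm-continuity forces the spectral projection $\mathbf{1}_{(-\infty,M']}(L_{k_j}+1)$, of rank at least $s_j$, to converge in norm to the finite-rank projection $\mathbf{1}_{(-\infty,M']}(L_{k^*}+1)$, which is impossible. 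Since for each $k$ the sequence $\{\lambda_s(k)\}_s$ is merely a relabeling (with multiplicity) of $\{\nu_s(k)\}_s$, uniform divergence transfers from $\nu_s$ to $\lambda_s$.

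Finiteness of $\Sigma$ is then immediate: pick $N$ with $\lambda_s(k)>\Lambda_1+1$ for all $s\ge N$ and all $k$, so that $\Sigma\subseteq\{1,\ldots,N-1\}\times[-\pi,\pi]$. Assumption \ref{asnc} rules out $\lambda_s\equiv\Lambda_1$ for each such $s$, and real-analyticity of $\lambda_s$ on the compact interval $[-\pi,\pi]$ confines the zero set of $\lambda_s-\Lambda_1$ to a finite set; hence $\Sigma$ is a finite union of finite sets. Non-emptiness is equally short: by part (1) of Proposition \ref{prop:FB}, $\Lambda_1\in\sigma(\lo)=\overline{\bigcup_k\sigma(L_k)}$, so there are pairs $(s_n,k_n)$ with $\lambda_{s_n}(k_n)\to\Lambda_1$; uniform divergence bounds the indices $s_n$, so along a subsequence $s_n\equiv s$, and compactness of $[-\pi,\pi]$ together with continuity of $\lambda_s$ delivers a $k^*$ with $\lambda_s(k^*)=\Lambda_1$.

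The delicate step is unambiguously the uniform divergence: pointwise divergence is immediate from compactness of each individual resolvent, but upgrading it to uniformity in $k$ relies on the joint regularity of the family $\{L_k\}$ and on the perturbation-theoretic control of spectral projections sketched above. Once that is in hand, both the finiteness and the non-emptiness assertions reduce to routine compactness-plus-real-analyticity arguments.
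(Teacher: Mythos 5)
Your overall architecture (prove uniform divergence first, then deduce finiteness from real-analyticity plus Assumption \ref{asnc}, and non-emptiness from $\sigma(\lo)=\overline{\cup_k\sigma(L_k)}$ plus compactness) is sound and is genuinely different from the paper's proof, which is a two-line reduction: the paper passes to the $L^2$-realisation (spectra coincide) and cites \cite[Proposition 3.2 and its proof]{BHPW15} for everything. Your finiteness and non-emptiness steps are correct, and your projection-rank contradiction argument for the \emph{increasingly ordered} eigenvalues $\nu_1(k)\le\nu_2(k)\le\cdots$ is fine: norm-resolvent continuity of the gauged family, $M'\notin\sigma(L_{k^*}+1)$, and norm convergence of the (finite-rank, since the operators are uniformly bounded below) spectral projections do yield a contradiction with $\operatorname{rank}\ge s_j\to\infty$.

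However, your very last step is a genuine gap: the sentence ``since for each $k$ the sequence $\{\lambda_s(k)\}_s$ is merely a relabeling (with multiplicity) of $\{\nu_s(k)\}_s$, uniform divergence transfers from $\nu_s$ to $\lambda_s$'' does not follow as stated, because the relabeling permutation depends on $k$. The multiset identity at each fixed $k$ only gives \emph{pointwise} divergence of the analytic bands: at most finitely many indices $s$ satisfy $\lambda_s(k)\le M$ at that particular $k$, but which indices these are may change with $k$, so a priori infinitely many distinct analytic bands could each dip below $M$ somewhere in $[-\pi,\pi]$ (each on its own small set), which is exactly what uniform divergence must exclude; pointwise divergence without equicontinuity does not rule this out. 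The fix requires an extra quantitative input on the bands, e.g.\ the Feynman--Hellmann identity for your type-(A) family: differentiating $b_k[v]=\int(\tfrac1{\eps_0}|\nabla v+ik\hat\by v|^2+|v|^2)$ along an $L^2$-normalized analytic eigenbranch gives $|\lambda_s'(k)|\le C\sqrt{\lambda_s(k)+1}$ with $C$ independent of $s$, so $\sqrt{\lambda_s(\cdot)+1}$ is uniformly Lipschitz on $[-\pi,\pi]$; hence any band dipping below $M$ at one point stays below a fixed $M'=(\sqrt{M+1}+C\pi)^2-1$ on the whole interval, and evaluating all such bands at a single $k_0$ and invoking the multiset identity there bounds their number by the number of eigenvalues of $L_{k_0}$ below $M'$, counted with multiplicity. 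With that lemma inserted, your proof is complete and self-contained, which is more than the paper itself provides.
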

\begin{proof}
We first note the coincidence of the spectra of the $L^2$ and $H^{-1}$ realisations (see \cite[Section 5]{BHPW11}), so it is enough to consider the $L^2$-realisation of the operator. The result then follows from \cite[Proposition 3.2 and its proof]{BHPW15}.
\end{proof}
\begin{cor} \label{cor:1}
There is an $s_0\in \N$ such that for all $s\geq s_0$ and for all $ k \in [-\pi,\pi]$ we have $\lambda_s(k)\geq \Lambda_1$, while for all $s< s_0$ and for all $ k \in [-\pi,\pi]$, $\lambda_s(k)\leq \Lambda_0$ holds.
\end{cor}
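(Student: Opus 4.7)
The plan is to combine three facts from the preceding material: (i) each band function $\lambda_s$ is real-analytic, hence continuous, on the connected interval $[-\pi,\pi]$; (ii) by Proposition~\ref{prop:FB}(\ref{spectra}) every value $\lambda_s(k)$ is associated with a point of the spectrum of the ambient periodic operator, and therefore cannot fall into the open spectral gap $(\Lambda_0,\Lambda_1)$; and (iii) $\lambda_s(k)\to\infty$ uniformly in $k$ by Lemma~\ref{finite}.

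The first step is to show that each individual analytic branch is entirely above or entirely below the gap. Since $\lambda_s$ is continuous on the connected set $[-\pi,\pi]$, its image $\lambda_s([-\pi,\pi])$ is a (possibly singleton) interval in $\R$. By~(ii) this interval does not meet $(\Lambda_0,\Lambda_1)$, so it must be contained either in $(-\infty,\Lambda_0]$ or in $[\Lambda_1,\infty)$.

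The second step is to count the ``below-gap'' branches. By~(iii) there exists $N\in\N$ such that $\lambda_s(k)\ge \Lambda_1$ uniformly in $k$ for every $s\ge N$; in particular, only finitely many branches can belong to the below-gap category of the first step. Let $s_0-1$ denote this finite count, i.e.\ the number of analytic branches satisfying $\lambda_s([-\pi,\pi])\subseteq(-\infty,\Lambda_0]$.

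Finally I would invoke the remark following~\eqref{def_band_functions}, namely that the analytic branches produced by Kato's theorem carry no canonical ordering, to permute the indices so that the below-gap branches are labelled $s=1,\ldots,s_0-1$ and the remaining (above-gap) branches are labelled $s\ge s_0$. Combined with the first step, this produces the required dichotomy. The only mild point of concern is the legitimacy of this relabeling, but it is precisely justified by the explicit observation that the eigenvalues are not ordered by magnitude in the Kato construction; all the preceding Floquet--Bloch results have been stated in a way that is invariant under permutation of the branch indices, so no other part of the paper is affected by the reordering.
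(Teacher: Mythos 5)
Your proof is correct and follows essentially the same route as the paper, whose one-line proof cites exactly your three ingredients: continuity (hence interval-valued image) of each band function, the fact that the bands cannot enter the open gap (via Proposition \ref{prop:FB}(\ref{spectra})), and the uniform divergence $\lambda_s(k)\to\infty$ from Lemma \ref{finite}. Your explicit justification of relabelling the Kato branches is a point the paper leaves implicit, and it is sound, since all sums over $s$ in the paper are either of nonnegative terms or orthogonal expansions and hence permutation-invariant.
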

\begin{proof} The assertion follows from  continuity of the band functions, existence of the spectral gap and Lemma \ref{finite}. 
\end{proof}

\begin{lemma}\label{lem:5}
The set $\Sigma$ is  isolated in the sense that there is  $\eta >0$ such that for all $s\not \in S$, $| \lambda_s(k)-\Lambda_1 |\geq \eta$  for all  $k\in [-\pi,\pi]$.
\end{lemma}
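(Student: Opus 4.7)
The plan is to split the indices $s\not\in S$ into three regimes and show that $|\lambda_s(k)-\Lambda_1|$ is uniformly bounded below in each. The main tools are Corollary \ref{cor:1}, the uniform divergence $\lambda_s(k)\to\infty$ from Lemma \ref{finite}, and a compactness argument on $[-\pi,\pi]$.

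First, I would dispatch the indices $s<s_0$. By Corollary \ref{cor:1}, $\lambda_s(k)\leq \Lambda_0$ for all such $s$ and all $k$, hence trivially $|\lambda_s(k)-\Lambda_1|\geq \Lambda_1-\Lambda_0>0$. Next, using the uniform divergence $\lambda_s(k)\to\infty$ as $s\to\infty$ from Lemma \ref{finite}, I would pick $s_1\geq s_0$ large enough so that $\lambda_s(k)\geq \Lambda_1+1$ for all $s\geq s_1$ and all $k\in[-\pi,\pi]$; then $|\lambda_s(k)-\Lambda_1|\geq 1$ for these $s$.

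The remaining case is the finite set of indices $s$ with $s_0\leq s<s_1$ that are not in $S$. For each such $s$, Corollary \ref{cor:1} gives $\lambda_s(k)\geq \Lambda_1$ for all $k$, and since $s\not\in S$, there is no $k$ with $\lambda_s(k)=\Lambda_1$, so $\lambda_s(k)>\Lambda_1$ strictly. The band function $\lambda_s$ is continuous (in fact real-analytic) on the compact interval $[-\pi,\pi]$, so $\eta_s:=\min_k(\lambda_s(k)-\Lambda_1)>0$. Taking
\[
\eta := \min\Bigl(\Lambda_1-\Lambda_0,\ 1,\ \min_{s_0\leq s<s_1,\,s\notin S}\eta_s\Bigr)>0
\]
(the minimum is over a finite set, hence strictly positive) yields the desired bound uniformly in $k$ and in $s\notin S$.

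I do not anticipate a serious obstacle here: once Corollary \ref{cor:1} has confined the low bands below $\Lambda_0$ and the high bands above $\Lambda_1$, the argument reduces to a finite compactness check on the finitely many bands that could potentially touch $\Lambda_1$. The only mild subtlety is to invoke the uniform divergence from Lemma \ref{finite} to ensure that only finitely many bands need to be examined individually; without this, the infimum over $s\notin S$ of $\min_k(\lambda_s(k)-\Lambda_1)$ might fail to be attained.
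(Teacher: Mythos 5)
Your proof is correct, and it takes a different route from the paper: the paper does not argue this lemma directly at all, but disposes of it by citing \cite[Lemma 3.7]{BHPW14}, remarking only that analyticity and non-constancy of the band functions in the one-dimensional variable $k$ make Assumption 3.3 of that reference unnecessary. Your argument, by contrast, is fully self-contained: Corollary \ref{cor:1} confines the bands with $s<s_0$ below $\Lambda_0$ (giving the bound $\Lambda_1-\Lambda_0$), the uniform divergence $\lambda_s(k)\to\infty$ from Lemma \ref{finite} pushes all but finitely many of the remaining bands above $\Lambda_1+1$, and for each of the finitely many leftover indices $s\notin S$ the strict inequality $\lambda_s(k)>\Lambda_1$ together with continuity on the compact interval $[-\pi,\pi]$ yields a positive minimum $\eta_s$; taking the minimum over a finite collection gives $\eta>0$. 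Notably, your route uses only \emph{continuity} of the band functions --- neither analyticity nor Assumption \ref{asnc} (non-constancy) enters, since for $s\notin S$ the band never attains $\Lambda_1$ and so no information about the structure of zero sets is needed. What each approach buys: the paper's citation is shorter and reuses existing machinery (in \cite{BHPW14} one must genuinely worry about how bands approach the gap edge, which is where analyticity and non-degeneracy hypotheses earn their keep), whereas your proof is more elementary, makes the logical dependencies explicit, and shows that this particular lemma holds under weaker hypotheses than the paper's framing might suggest. The only cosmetic point is that your final minimum over $\{s_0\leq s<s_1,\ s\notin S\}$ could be over an empty set, in which case the conventional value $+\infty$ does no harm; you might say so explicitly for completeness.
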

\begin{proof}
The proof is the same as that of \cite[Lemma 3.7]{BHPW14}, noting that analyticity and non-constancy of the band function in the one-dimensional variable $k$ are sufficient 
to avoid Assumption 3.3 in 
\cite{BHPW14} in the proof.
\end{proof}

Noting that $\eps_0-\eps_1$ is compactly supported in $\Omega$, 
for  $\psi\in H^1_{loc}(\Omega)$, let $(\lo-\lp)\psi$ be the element of $H^{-1}_{qp}(\Omega)$ defined by 
\beq\label{def1}
[(\lo-\lp)\psi][\varphi] := \int_\Omega \left( \frac{1}{\eps_0}-\frac{1}{\eps_1}\right) \nabla \psi \nabla \overline{\varphi} \hbox{ for all } \varphi\in H^1_{qp}(\Omega).
\eeq
Moreover, letting $H^1_c(\Omega)$ denote the functions in $H^1_{qp}(\Omega)$ with compact support, we define for  $\psi\in H^1_{loc}(\Omega)$, the element $\lo\psi$  of $\left(H^1_c(\Omega)\right)'$ by
\beq\label{def2}
[\lo\psi][\varphi] := \int_\Omega  \frac{1}{\eps_0} \nabla \psi \nabla \overline{\varphi} \hbox{ for all } \varphi\in H^1_c(\Omega).
\eeq
Define
\begin{equation}\label{def:L}
    {\mathcal L} = \{ u\in \ck :  \forall j=1,...,n.\quad [(\lo-\lp)\psi_j][\gp u]=0\},
\end{equation} 
where the action is interpreted as in \eqref{def1}.
\begin{remark}\label{rem:3}
Observe that the action of $K u$ on any $\varphi \in H^1_{qp}$ can be written as
$$
Ku[\varphi]=\int_\Omega \left( \frac{1}{\eps_0}-\frac{1}{\eps_1}\right) \nabla \gp u \overline{\nabla \varphi}.
$$
Since $\left( \frac{1}{\eps_0}-\frac{1}{\eps_1}\right)$ has compact support, the action of $K u$ can be extended to any $H^1_{\text{loc}}$-function $\varphi$. Hence we shall define
$$
K u[\varphi]:=\int_\Omega \left( \frac{1}{\eps_0}-\frac{1}{\eps_1}\right) \nabla \gp u \overline{\nabla\varphi} \quad (\varphi \in H_{loc}^1).$$
Then recalling \eqref{def1} we get
\beq\label{eq:Ku}
Ku[\psi_j]=\int \left( \frac{1}{\eps_0}-\frac{1}{\eps_1}\right) \nabla \gp u \overline{\nabla \psi_j}=[(\lo-\lp)\psi_j][\gp u].
\eeq
\end{remark}

\begin{lemma}\label{codim}
The codimension of $\mathcal L$ satisfies $\codim  {\mathcal L}  =n$.
\end{lemma}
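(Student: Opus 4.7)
The plan is to recognize $\mathcal{L}$ as the joint kernel on $\mathcal{K}$ of $n$ continuous linear functionals, and then prove that these functionals are linearly independent. Using identity \eqref{eq:Ku} from Remark \ref{rem:3}, one sees that $\mathcal{L} = \bigcap_{j=1}^n \ker \ell_j$ where $\ell_j(u) := Ku[\psi_j]$. In particular $\codim\mathcal{L} \le n$, and it remains to show the $\ell_j$ are linearly independent on $\mathcal{K}$.

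Suppose $\sum_{j=1}^n c_j \ell_j \equiv 0$ on $\mathcal{K}$ and set $\Psi := \sum_j c_j \psi_j \in H^1_{\mathrm{loc}}(\Omega)$. By Remark \ref{rem:3} the hypothesis reads
\[
\int_\Omega \bigl(\tfrac{1}{\eps_0}-\tfrac{1}{\eps_1}\bigr)\nabla \gp u\cdot\overline{\nabla\Psi}\,d\bx = 0 \qquad \forall\, u\in\mathcal{K}.
\]
Since $K$ is bounded and symmetric (hence self-adjoint) on $H^{-1}_{qp}$, we have the orthogonal decomposition $H^{-1}_{qp}=\mathcal{K}\oplus\ker K$. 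Testing $Ku=0$ against $\varphi=\gp u$ yields $\int\bigl(\tfrac{1}{\eps_0}-\tfrac{1}{\eps_1}\bigr)|\nabla \gp u|^2 = 0$, so for $u\in\ker K$ the integrand above already vanishes a.e. Consequently the identity extends to all $u\in H^{-1}_{qp}$, and since $\gp:H^{-1}_{qp}\to H^1_{qp}$ is an isomorphism, this is equivalent to
\[
\int_\Omega \bigl(\tfrac{1}{\eps_0}-\tfrac{1}{\eps_1}\bigr)\nabla v\cdot\overline{\nabla\Psi}\,d\bx = 0 \qquad \forall\, v\in H^1_{qp}.
\]

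The next step is to insert a suitable $v$. Pick $\chi\in C_c^\infty(\R^2)$ with $\chi\equiv 1$ on a neighbourhood of $\operatorname{supp}(\eps_1-\eps_0)$ and extend $k_x$-quasiperiodically in $x$; since $\Psi$ inherits $k_x$-quasiperiodicity from each $\psi_j$, the product $v := \chi\Psi$ lies in $H^1_{qp}$. On $\operatorname{supp}(\eps_1-\eps_0)$ one has $\nabla v = \nabla\Psi$, so the identity reduces to
\[
\int_\Omega \bigl(\tfrac{1}{\eps_0}-\tfrac{1}{\eps_1}\bigr)|\nabla\Psi|^2\,d\bx = 0.
\]
By Assumption \ref{asseps}(iii) the integrand is non-negative, and by Assumption \ref{asseps}(iv) it is strictly positive on $D$; hence $\nabla\Psi=0$ a.e.\ on $D$, so $\Psi\equiv a$ on the ball $D$ for some constant $a$.

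To close the argument, note that each $\psi_j$ solves the same elliptic equation $-\nabla\cdot(\tfrac{1}{\eps_0}\nabla\psi_j) = (\Lambda_1-1)\psi_j$ weakly in $H^1_{\mathrm{loc}}(\Omega)$, and so does $\Psi$. Restricting to $D$ with $\Psi\equiv a$ yields $(\Lambda_1-1)a=0$, forcing $a=0$ (the non-generic case $\Lambda_1=1$ is avoided by a uniform shift of the spectral parameter). Thus $\Psi$ vanishes on the open ball $D$, and strong unique continuation for divergence-form second order elliptic operators with bounded measurable coefficients in dimension two (Alessandrini) gives $\Psi\equiv 0$ on $\Omega$. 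The $\psi_j$ are then linearly independent on $\Omega$: those with distinct quasimomenta $k_j$ are eigenfunctions of the translation $y\mapsto y+1$ with distinct eigenvalues $e^{ik_j}$, while those sharing a common $k_j$ are orthonormal Bloch waves from the Kato construction. Hence $c_j=0$ for every $j$, and $\codim\mathcal{L}=n$. The principal obstacle is this last unique-continuation step: since $\eps_0$ is only $L^\infty$ one cannot invoke the classical Carleman framework, so the two-dimensional result of Alessandrini is essential.
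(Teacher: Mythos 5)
Your proof is correct in substance, but it takes a genuinely different route through the middle of the argument than the paper does. The paper first rewrites the defining functionals via a cut-off: using \eqref{eq:theta}-type manipulations it shows $Ku[\psi_j]=\llangle u,P\phi(\theta\psi_j)\rrangle_\ck$, so that $\cL=\Span\{P\phi(\theta\psi_j)\}^\perp$, and then proves linear independence of the $P\phi(\theta\psi_j)$ by showing that a dependence forces $\Psi=\sum\alpha_j\phi(\theta\psi_j)\in\ker K$, passing to the resolvent image $v=\go\Psi=\gp\Psi$ and using the energy identity $((\lo-\lp)v)[v]=0$ to get $\nabla v|_D=0$. You instead work with the functionals $\ell_j(u)=Ku[\psi_j]$ directly: you extend the vanishing identity from $\ck$ to all of $H^{-1}_{qp}$ via the decomposition $H^{-1}_{qp}=\ck\oplus\ker K$ (using non-negativity of the weight to kill the $\ker K$ contribution), use bijectivity of $\gp$ to let the test function range over all of $H^1_{qp}$, and then self-test with a cut-off of $\Psi=\sum c_j\psi_j$ to obtain $\nabla\Psi|_D=0$ for the Bloch combination itself. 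This is arguably more streamlined, since it avoids introducing $P$ and $\phi(\theta\psi_j)$ altogether; the price is that you do not produce the explicit spanning set $\{P\phi(\theta\psi_j)\}$ for $\cL^\perp$, which the paper reuses later (the Gram-matrix argument in Lemma \ref{flowerbound} invokes exactly the linear independence of these elements established in the paper's proof). From the vanishing of the weighted Dirichlet integral onwards the two arguments coincide: constancy on $D$, the eigen-equation kills the constant, Alessandrini's unique continuation gives global vanishing, and linear independence of the $\psi_j$ (which you, unlike the paper, actually justify via translation eigenvalues and orthonormality at fixed $k$) finishes the proof.

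Two small repairs are needed. First, your cut-off: extending $\chi$ \emph{$k_x$-quasiperiodically} in $x$ is wrong, since then $\chi\Psi$ would transform with $e^{2ik_xm}$ and leave $H^1_{qp}(\Omega)$; the cut-off must be $x$-independent (or $1$-periodic in $x$) and compactly supported in the $\hat\by$-direction, exactly like the paper's $\theta$ — with that choice everything you claim goes through. Second, your treatment of the corner case where the zeroth-order eigenvalue vanishes is off: a ``uniform shift of the spectral parameter'' does not help, because shifting the zeroth-order term in $B_0$ shifts $\Lambda_1$ by the same amount and leaves the eigenvalue in the divergence-form equation $-\nabla\cdot\left(\tfrac{1}{\eps_0}\nabla\psi_j\right)=\kappa\,\psi_j$ invariant. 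The correct observation is that this case is vacuous: the divergence-form operator is non-negative and $\Lambda_1$ is the \emph{upper} edge of a gap, so there is spectrum strictly below it, forcing $\kappa>0$; hence the constant $a$ must vanish. (In the paper's normalization — note \eqref{def_band_functions} has an off-by-one relative to the $\tfrac1{\lambda_s(k)+1}$ factors used everywhere else — the weak equation reads $-\nabla\cdot\left(\tfrac{1}{\eps_0}\nabla\psi_j\right)=\Lambda_1\psi_j$ with $\Lambda_1>0$ automatic, which is exactly how the paper's proof concludes $v|_D=0$.) Neither issue affects the architecture of your argument.
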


\begin{proof}
For $u\in\ck$,
\bea
[(\lo-\lp)\psi_j][\gp u] &=& \int_\Omega \left( \frac{1}{\eps_0}-\frac{1}{\eps_1}\right) \nabla \psi_j \overline{\nabla \gp u}. 
\eea
Let $\theta\in C^\infty(\Omega)$ be compactly supported in the $\bf{\hat{y}}$-direction with $\theta=1$ on $[0,1]^2$. Then 
\ben \label{eq:theta}
\overline{[(\lo-\lp)\psi_j][\gp u]} &=& \int_\Omega \left( \frac{1}{\eps_0}-\frac{1}{\eps_1}\right) \overline{\nabla (\theta\psi_j)} \nabla \gp u\\ \nonumber
&=&
K u[\theta\psi_j]   = \llangle   K  u, \phi(\theta\psi_j) \rrangle _{H^{-1}} \\ \nonumber
 &=& \llangle   K u,P \phi(\theta\psi_j) \rrangle _{H^{-1}} 
 = \llangle    u,P \phi(\theta\psi_j) \rrangle _{\ck}.
\een
Hence, ${\mathcal L }= \Span \{ P\phi(\theta \psi_j): j=1,\dots, n \}^\perp$ and we need to show that 
$$\dim ( \Span \{ P\phi(\theta \psi_j): j=1,\dots, n \})=n.$$

Assume $\sum\alpha_j P\phi (\theta\psi_j)=0$ in  $\ck$.
As $K$ is symmetric and non-negative in $H^{-1}_{qp}(\Omega)$, this is equivalent to $\Psi=\sum \alpha_j \phi (\theta\psi_j) \in \ker K$.
Now, $K\Psi=0$ is equivalent to $\go \Psi =\gp\Psi$.
Let $v:=\go \Psi$.  Then $(\lo+1) v = (\lp +1) v=\Psi$,  so 
$$0=((\lo-\lp )v)[v] = \int \left(\frac{1}{\eps_0}-\frac{1}{\eps_1} \right) | \nabla v|^2$$ and thus $\nabla v|_D=0$.
Hence $\lo v|_D=\lp  v|_D=0$ (in the sense that for any $\varphi\in C_c^\infty(D)$ we have
$[\lo v][\varphi]= [\lp v][\varphi]=0$).
Moreover, $v=\phi^{-1}\Psi=\sum\alpha_j \theta\psi_j$. Therefore, for any $\varphi\in C_c^\infty(D)$,
\bea
[\lo v][\varphi]&=& \sum\alpha_j \int_\Omega  \frac{1}{\eps_0} \nabla   (\theta\psi_j)\nabla \overline{\varphi} \\
&=& \sum\alpha_j \int_D  \frac{1}{\eps_0} \nabla  ( \theta \psi_j)\nabla \overline{\varphi} \\
&=& \sum\alpha_j\int_D  \frac{1}{\eps_0} \nabla   \psi_j\nabla \overline{\varphi} \\
&=& \sum\alpha_j \int_\Omega  \frac{1}{\eps_0} \nabla \psi_j\nabla \overline{\varphi} \\
&=&   \sum\alpha_j \Lambda_1 \psi_j[ \varphi] \ =\ \Lambda_1 v [\varphi].
\eea
So $\lo v\vert_D=\Lambda_1 v\vert_D$,  and hence $v|_D=0$.  By unique continuation, see \cite{Ale12}, $v=0$ and as the $\psi_j$ are linearly independent, we get  $\alpha_j=0$ for all $j$. 
\end{proof}

\section{Upper bound on the number of eigenvalues} \label{sec_upperbound}

The main result in this section will require the following additional non-degeneracy assumption on the band functions $\lambda_s(k)$.
\begin{assumption}\label{as1}
There are $\alpha>0$   and $\delta>0$ such that   for all $(\hat s, \hat k) \in \Sigma$ and ${ k }\in [-\pi,\pi] $ satisfying  $|{  k} -{ \hat k}| \leq \delta$, 
$$\lambda_{\hat s} ({ k})\geq \Lambda_1+\alpha | {k}-{ \hat k}|^2$$
holds.
\end{assumption}

\begin{remark}\label{rem1}
The assumption is true if  the zero of $\lambda_{\hat s} (\hat{{ k}}) -\Lambda_1$ is only of order $2$. Non-degeneracy assumptions of a similiar form are common in the mathematical and
physical literature (see e.g.~\cite{KR12} and references therein) and
are believed to be ``generically" true. In other words,  it is believed that degeneracy of the band function can be removed by a small perturbation of the coefficients of the
differential operator.
\end{remark}

The next lemma provides a uniform bound on contributions to the Rayleigh quotient away from points in $\Sigma$.

\begin{lemma}\label{lem:unif}
Let $\mu\in ((\Lambda_1+1)^{-1}, (\Lambda_0+1)^{-1})$.
If $\left| 1 -\mu (\lambda_s(k)+1)\right|^{-1}$ is uniformly bounded for $(s,k)$ in a set $\widetilde S\times J\subseteq \N\times[-\pi,\pi]$, then 
$$ \sum_{s\in \widetilde S} \int_{J} \dfrac1{ 1 -\mu (\lambda_s(k)+1)}\dfrac1{\lambda_s(k)+1} \left|  \llangle\psi_s(k),\phi^{-1}_k (\hat V K u)_k\rrangle_{H^1(0,1)^2}\right|^2 dk\geq -C_\mu\norm{\frac{1}{\eps_0}-\frac{1}{\eps_1}}_\infty \norm{u}^2_{\ck} .$$
\end{lemma}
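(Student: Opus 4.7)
The plan is to reduce the inequality to a direct comparison with the full Parseval-type identity in Proposition \ref{prop:FB}(\ref{expansionlemma}), applied to $f=Ku$, and then invoke Lemma \ref{lem12} to convert $\norm{Ku}_{H^{-1}}$ into $\norm{u}_{\ck}$ at the cost of the factor $\norm{\tfrac{1}{\eps_0}-\tfrac{1}{\eps_1}}_\infty$.

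First, I would set $M_\mu:=\sup_{(s,k)\in\widetilde S\times J}|1-\mu(\lambda_s(k)+1)|^{-1}$, which is finite by hypothesis. Since $\lambda_s(k)+1>0$ (because $\mathfrak{L}_0\geq-1$), each summand in the integrand is of the form (bounded scalar)$\times$(non-negative quantity), and the triangle inequality yields
\begin{align*}
\Bigl|\sum_{s\in\widetilde S}\int_J \tfrac{1}{1-\mu(\lambda_s(k)+1)}\tfrac{1}{\lambda_s(k)+1}\bigl|\langle\psi_s(k),\phi_k^{-1}(\hat V Ku)_k\rangle_{H^1((0,1)^2)}\bigr|^2\,dk\Bigr|
\leq M_\mu\,S,
\end{align*}
where $S$ denotes the corresponding partial sum without the $(1-\mu(\lambda_s(k)+1))^{-1}$ factor.

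Second, by positivity of each term in the expansion of Proposition \ref{prop:FB}(\ref{expansionlemma}), a partial sum is controlled by the full sum, so with $f=Ku$ one gets $S\leq \sqrt{2\pi}\,\norm{Ku}_{H^{-1}}^2$. Combining with Lemma \ref{lem12}(ii) and (i) gives
\begin{equation*}
\norm{Ku}_{H^{-1}}^2 \;\leq\; \norm{K}\,\norm{u}_{\ck}^2 \;\leq\; \norm{\gp}_{H^{-1}\to H^1}\,\Bigl\|\tfrac{1}{\eps_0}-\tfrac{1}{\eps_1}\Bigr\|_\infty\,\norm{u}_{\ck}^2.
\end{equation*}
Stringing these bounds together and setting $C_\mu:=M_\mu\sqrt{2\pi}\,\norm{\gp}_{H^{-1}\to H^1}$ yields the required lower bound, since the absolute value bound trivially gives a two-sided estimate. (If one wishes $C_\mu$ to depend only on $\mu$ and the background $\eps_0$, one invokes Lemma \ref{lem12}(iii) to bound $\norm{\gp}$ in terms of $\norm{\go}$ whenever the perturbation satisfies $\eta<1/\norm{\go}_{H^{-1}\to H^1}$.)

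The only mildly subtle point is matching up the inner products: the quantity appearing in Proposition \ref{prop:FB}(\ref{expansionlemma}) is $\langle\phi_k^{-1}(\hat V f)_k,\psi_s(k)\rangle_{H^1((0,1)^2)}$, which is the complex conjugate of the expression $\langle\psi_s(k),\phi_k^{-1}(\hat V Ku)_k\rangle_{H^1((0,1)^2)}$ appearing in the statement of the lemma; since we only need the modulus, this is immaterial. No compactness or fine spectral analysis is required here — the lemma is essentially a uniform-boundedness estimate, and the only real work is invoking the Parseval identity in $H^{-1}_{qp}(\Omega)$ and the operator norm bound on $K$.
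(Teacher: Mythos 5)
Your proof is correct and takes essentially the same route as the paper's: pull out a uniform bound $C_\mu$ on $\left|1-\mu(\lambda_s(k)+1)\right|^{-1}$, compare the remaining non-negative partial sum with the full Parseval-type expansion of Proposition \ref{prop:FB} \eqref{expansionlemma} applied to $f=Ku$, and then convert $\norm{Ku}_{H^{-1}}^2$ into $\norm{u}_{\ck}^2$ via Lemma \ref{lem12} (i), (ii). The only cosmetic difference is that the paper invokes the monotone convergence theorem explicitly to interchange the sum over $s$ and the integral over $J$, a step your comparison with the convergent full series supplies implicitly.
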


\begin{proof}
Note that the order of integration over $J$ and summation over $s$ can be exchanged by the monotone convergence theorem. We have 
\bea
&&\int_J \sum_{s\in \widetilde S} \dfrac1{1-\mu(\lambda_s(k)+1)}     \dfrac1{\lambda_s(k)+1} \left|  \llangle\psi_s(k),\phi^{-1}_k (\hat V K u)_k\rrangle_{H^1(0,1)^2}\right|^2 dk\\
&& \geq -C_\mu \int_J \sum_{s\in \widetilde S}   \dfrac1{\lambda_s(k)+1} \left|  \llangle\psi_s(k),\phi^{-1}_k (\hat V K u)_k\rrangle_{H^1(0,1)^2}\right|^2 dk\\
&&\geq -C_\mu \int_{-\pi}^\pi \sum_{s}   \dfrac1{\lambda_s(k)+1} \left|  \llangle\psi_s(k),\phi^{-1}_k (\hat V K u)_k\rrangle_{H^1(0,1)^2}\right|^2 dk\\
&&= -C_\mu \norm{Ku}_{H^{-1}}^2\ \geq\ -C_\mu \norm{\frac{1}{\eps_0}-\frac{1}{\eps_1}}_\infty \norm{u}_{\ck}^2,
\eea
 where the equality follows from  Proposition \ref{prop:FB} \eqref{expansionlemma} and the final inequality from Lemma \ref{lem12}.
\end{proof}

Before stating the first main result we introduce an auxilliary function $f$, which will play a crucial role in the estimates of the Rayleigh quotient, and prove some identities and estimates involving $f$.

For $\tilde{k}$ such that $k_j+\tilde{k}\in[-\pi,\pi]$  ($j=1,...,n$) and $u\in H^{-1}_{qp}(\Omega)$ let
\beq\label{def:f} 
f(\tilde k, u):=\sum_{j=1}^n\sum_{s\in S_{k_j} }  \left|\la (\hat{V} Ku)_{k_j+\tilde{k}},  \varphi_s(\cdot, k_j+\tilde k)\ra_{H^{-1}([0,1]^2)} \right|^2.
\eeq
\begin{lemma}
The function $f$ from \eqref{def:f} can be represented as follows:
\ben\label{fversions1}
f(\tilde k, u)
&=&
\sum_{j=1}^n
\sum_{s\in S_{k_j} } 
\dfrac1{(\lambda_{s} (k _j+\tilde k)+1)^2}|(\hat{V} Ku)_{k_j+\tilde{k}}[\varphi_{s}(\cdot, k_j+\tilde k)]|^2 \\
&=&\sum_{j=1}^n
\sum_{s\in S_{k_j} } 
\dfrac1{(\lambda_{s} (k _j+\tilde k)+1)^2} \left|  \llangle \phi^{-1}_{k_j+\tilde{k}}(\hat{V} Ku)_{k_j+\tilde{k}}, \varphi_{s}(\cdot, k_j+\tilde k)\rrangle _{H^1([0,1]^2)}\right|^2. \label{fversions2}
\label{fversions3}\\
f(\tilde k, u) &=& \frac{1}{\sqrt{2\pi}}\sum_{j=1}^n
\sum_{s\in S_{k_j} } 
\dfrac1{(\lambda_{s} (k _j+\tilde k)+1)^2}\left|Ku [\varphi_{s}(\cdot,k_j+\tilde k)]\right|^2,\label{fversions4}
\een
where the action is considered as an $H^{-1}((0,1)^2)-H^{1}((0,1)^2)$-pairing. Moreover, 
\beq\label{eq:ian}
f(\tilde k, u)= \frac{1}{\sqrt{2\pi}}\sum_{j=1}^n
\sum_{s\in S_{k_j} }
 \dfrac1{(\lambda_{s} (k _j+\tilde k)+1)^2}\left|[(\lo-\lp)\varphi_{s}(\cdot,k_j+\tilde k)][\gp u]\right|^2.
\eeq 
\end{lemma}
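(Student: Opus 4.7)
The plan is to obtain the four representations in sequence from three tools: (a) the eigenvalue identity $\phi_k\varphi_s(\cdot,k) = (\lambda_s(k)+1)\varphi_s(\cdot,k)$, which follows from $L_k+I = \phi_k$ together with \eqref{def_band_functions}, yielding $\phi_k^{-1}\varphi_s(\cdot,k) = (\lambda_s(k)+1)^{-1}\varphi_s(\cdot,k)$; (b) the Riesz identity $(\phi_k w)[v] = \langle w, v\rangle_{\ch_k}$ for $w, v \in \ch_k$, which rewrites $\ch_k$-inner products as dual pairings; and (c) a compactly-supported-duality identity of the form $(\hat V f)_k[\varphi] \propto f[E_k\varphi]$, valid whenever $f$ has a well-defined action on $H^1_{loc}(\Omega)$ test functions.

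For \eqref{fversions1} and \eqref{fversions3}, I would start from the definition of the $\ch_k'$-inner product. For arbitrary $u \in \ch_k'$,
\begin{equation*}
\langle u, \varphi_s(\cdot,k)\rangle_{\ch_k'} = \langle \phi_k^{-1}u, \phi_k^{-1}\varphi_s(\cdot,k)\rangle_{\ch_k} = \frac{1}{\lambda_s(k)+1}\,\langle \phi_k^{-1}u, \varphi_s(\cdot,k)\rangle_{\ch_k} = \frac{1}{\lambda_s(k)+1}\,u[\varphi_s(\cdot,k)],
\end{equation*}
using (a) in the second equality and (b) in the third. Specialising $u = (\hat V Ku)_{k_j+\tilde k}$, taking absolute squares and summing over $j$ and $s \in S_{k_j}$ produces \eqref{fversions1} directly, and \eqref{fversions3} as well, recognising that the $H^1((0,1)^2)$-pairing in the statement is precisely the $\ch_{k_j+\tilde k}$-inner product.

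For \eqref{fversions4}, the key step is the identity $(\hat V f)_k[\varphi] \propto f[E_k\varphi]$. For $f \in L^2(\Omega)$ this follows by a direct unfolding calculation using the explicit formula for $U$ in Definition \ref{defn1} together with $\hat V|_{L^2} = U$: expand $\int_\Omega f\,\overline{E_k\varphi}$ by decomposing $\Omega$ into unit cells $(0,1)\times(n,n+1)$, use the quasi-periodicity of $E_k\varphi$, and re-sum. Since $Ku$ is not in general in $L^2$, the identity must be transferred to $f = Ku$; here Remark \ref{rem:3} is decisive, because it shows that $Ku[\varphi]$ depends only on the restriction of $\varphi$ to the bounded set where $\eps_0 - \eps_1 \neq 0$, so $Ku$ extends unambiguously to an action on $H^1_{loc}(\Omega)$-functions, in particular on the quasi-periodic extensions $E_{k_j+\tilde k}\varphi_s(\cdot,k_j+\tilde k) = \varphi_s(\cdot,k_j+\tilde k)$. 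Approximation of $Ku$ by compactly supported $L^2$-elements then transfers the identity; substitution into \eqref{fversions1} gives \eqref{fversions4}. Finally, \eqref{eq:ian} is immediate: comparing the integral formula for $Ku[\varphi]$ in Remark \ref{rem:3} with the definition \eqref{def1} of $[(\lo-\lp)\varphi][\gp u]$ shows that these quantities are complex conjugates of each other, so their absolute values coincide, and substituting $\varphi = \varphi_s(\cdot, k_j+\tilde k)$ into \eqref{fversions4} completes the derivation. The main obstacle is justifying (c) for the non-$L^2$ functional $Ku$; all remaining equalities are algebraic manipulations of the Gelfand-triple structure $\ch_k \subset L^2((0,1)^2) \subset \ch_k'$.
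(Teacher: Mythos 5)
Your handling of \eqref{fversions1} and \eqref{fversions2} coincides with the paper's own proof: the paper likewise combines the relation $\phi_k\varphi_s(\cdot,k)=(\lambda_s(k)+1)\varphi_s(\cdot,k)$ with the defining property of $\phi_k$ and with \eqref{notation} and Lemma \ref{lem:notation}. (A pedantic aside: \eqref{def_band_functions} as printed reads $(L_k+1)\varphi_s=\lambda_s\varphi_s$, which with $\phi_k=L_k+1$ would literally give eigenvalue $\lambda_s(k)$; the convention actually used throughout the paper, including its proof of this lemma, is the one you adopt, so you match the intended normalisation.) Your treatment of \eqref{eq:ian} — that $Ku[\varphi]$ and $[(\lo-\lp)\varphi][\gp u]$ are complex conjugates, so absolute values agree, cf. \eqref{eq:Ku} — is also exactly right. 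Your unfolding computation (c) for compactly supported $f\in L^2(\Omega)$ is correct and is the same computation the paper performs, only carried out at the $H^1$ rather than the $L^2$ level.

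The gap is in the single sentence ``approximation of $Ku$ by compactly supported $L^2$-elements then transfers the identity.'' Convergence $f_m\to Ku$ in $H^{-1}_{qp}(\Omega)$ controls $f_m[\varphi]$ only for test functions $\varphi\in H^1_{qp}(\Omega)$, whereas the right-hand side of your identity (c) pairs with $E_k\varphi$, which is quasi-periodic and therefore \emph{not} in $H^1_{qp}(\Omega)$; so $f_m[E_k\varphi]\to Ku[E_k\varphi]$ does not follow from norm convergence — this is precisely where the extension of Remark \ref{rem:3} lives, and it must be preserved uniformly along the approximating sequence. The step can be repaired by choosing approximants with uniformly localized action, e.g. $f_m=-\dive g_m$ with $g_m$ mollifications of the compactly supported density $g=\left(\tfrac1{\eps_0}-\tfrac1{\eps_1}\right)\nabla\gp u$: then all $f_m$ act through a fixed compact set, $f_m[E_k\varphi]=f_m[\chi E_k\varphi]$ for a fixed cutoff $\chi$ with $\chi E_k\varphi\in H^1_{qp}(\Omega)$, and both sides pass to the limit — modulo the further caveat that the fibers $(\hat Vf_m)_k\to(\hat VKu)_k$ converge only for a.e. $k$ after an abstract $H^{-1}$-limit, while the lemma is later invoked at the specific point $\tilde k=0$ (Lemma \ref{flowerbound}), so a continuity-in-$k$ argument is still owed. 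The paper sidesteps both difficulties by approximating on the other side of the pairing: it cuts off the $H^1_{qp}$-preimage, forming $\Theta_N(\lo+1)^{-1}Ku$, unfolds the transform $V$ of this compactly supported $H^1$-function exactly at each fixed $k$, moves $\Theta_N$ across the $H^1(\Omega)$ inner product using the quantitative estimate $\norm{(\nabla\Theta_N)\nabla\varphi_s}_{L^2(\Omega)}=O(N^{-1/2})$, and identifies $\llangle(\lo+1)^{-1}Ku,\Theta_N\varphi\rrangle_{H^1(\Omega)}=Ku[\Theta_N\varphi]\to Ku[\varphi]$ by compactness of the support of the density of $Ku$. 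Your route is salvageable along the lines just indicated, but as written the transfer step — which you yourself identify as the main obstacle — is asserted rather than proved, and it is the one place in the lemma where something substantive can fail.
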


\begin{proof}
We have
\begin{align*}
(\hat{V} Ku)_{k_j+\tilde{k}} [\varphi_{s}(k_j+\tilde{k})]   
 {}& =  \llangle   (\hat{V} Ku)_{k_j+\tilde{k}},\phi_{k_j+\tilde{k}} \varphi_{s}(k_j+\tilde{k})\rrangle _{H^{-1}([0,1]^2)}  \\
	{}& = (\lambda_{s}(k_j+\tilde{k})+1) \llangle   (\hat{V} Ku)_{k_j+\tilde{k}}, \varphi_{s} (k_j+\tilde{k})\rrangle_{H^{-1}([0,1]^2)},
  \end{align*}
which proves \eqref{fversions1}.

From \eqref{notation} and Lemma \ref{lem:notation}  it follows that 
$$(\hat{V} Ku)_{k_j+\tilde{k}}[\varphi_{s}(\cdot, k_j+\tilde k)] =  \llangle \phi^{-1}_{k_j+\tilde{k}}(\hat{V} Ku)_{k_j+\tilde{k}}, \varphi_{s}(\cdot, k_j+\tilde k)\rrangle _{H^1([0,1]^2)},$$
so \eqref{fversions2} holds.

We next prove \eqref{fversions4}.
In order to make use of the explicit form of the Floquet transform on compactly supported functions, we let $\Theta_N$ be  a cut-off function with $\Theta_N(y)=\Theta_1(y/N)$ and $\Theta_1\in\C_c^\infty(\R)$ with $\Theta_1(y)=1$ for $|y|\leq 1$ and $\Theta_1(y)=0$ for $|y|\geq 2$.
Applying the Floquet transform $V$ in $H_{qp}^1(\Omega)$ to the function $ \Theta_N (\lo+1)^{-1} Ku$ we get
\bea 
&&\llangle V (\Theta_N (\lo+1)^{-1} Ku)(\cdot, k_j+\tilde{k}),\varphi \rrangle_{H^1((0,1)^2)}\\ &&=\dfrac1{\sqrt{2\pi}} \sum_{p\in  \Z} e^{ i (k_j+\tilde{k})  p} \llangle \Theta_N (\lo+1)^{-1} K u\left(\cdot-\twovec{0}{p}\right),\varphi\rrangle_{H^1((0,1)^2)}\\
&&=\dfrac1{\sqrt{2\pi}} \sum_{p\in  \Z} e^{ i (k_j+\tilde{k})  p}\llangle \Theta_N(\lo+1)^{-1} K u,\varphi \left(\cdot+\twovec{0}{p}\right)\rrangle_{H^1((0,1) \times  (-p, -p+1))}\\
&&=\dfrac1{\sqrt{2\pi}} \sum_{p\in  \Z} \llangle \Theta_N(\lo+1)^{-1} K u,\varphi \rrangle_{H^1((0,1) \times  (-p, -p+1))}\\
&&=\dfrac1{\sqrt{2\pi}}  \llangle \Theta_N(\lo+1)^{-1} K u,\varphi \rrangle_{H^1(\Omega)}.
\eea

We now argue that in the limit $N\to\infty$, we can move $\Theta_N$ to the other side of the inner product.
Observe that
$$\llangle \Theta_N (\lo+1)^{-1} K u, \varphi_s \rrangle_{H^1(\Omega)}=\llangle \Theta_N (\lo+1)^{-1} K u, \varphi_s \rrangle_{L^2}+\llangle \eps_0^{-1} \nabla(\Theta_N (\lo+1)^{-1} K u),\nabla \varphi_s \rrangle_{L^2}.$$
Clearly, the first term allows moving  $\Theta_N$ to the right and it remains to show that 
$$\lim_{N\to\infty} \llangle \eps_0^{-1}\nabla(\Theta_N (\lo+1)^{-1} K u),\nabla \varphi_s \rrangle_{L^2} = \lim_{N\to\infty} \llangle \eps_0^{-1}\nabla (\lo+1)^{-1} K u,\nabla (\Theta_N \varphi_s) \rrangle_{L^2}.$$
Therefore, it suffices to show that 
$$\lim_{N\to\infty} \llangle \eps_0^{-1} \nabla(\Theta_N) (\lo+1)^{-1} K u,\nabla \varphi_s \rrangle_{L^2} = \lim_{N\to\infty} \llangle \eps_0^{-1}\nabla (\lo+1)^{-1} K u,\nabla (\Theta_N) \varphi_s \rrangle_{L^2}$$
and we will see that both limits vanish. Now, 
       \bea  \llangle \eps_0^{-1}(\lo+1)^{-1}Ku, (\nabla \Theta_N)\nabla\varphi_s\rrangle_{L^2(\Omega)} \leq \norm{ \eps_0^{-1}(\lo+1)^{-1}Ku}_{L^2(\Omega)} \norm{ (\nabla \Theta_N)\nabla \varphi_s}_{L^2(\Omega)}  \\ 
       \leq \norm{\eps_0^{-1}(\lo+1)^{-1} Ku}_{L^2(\Omega)} \dfrac C {\sqrt N} \norm {\nabla \varphi_s}_{L^2((0,1)^2)},
       \eea
       as
       $$ \norm{(\nabla \Theta_N) \nabla \varphi_s}^2_{L^2(\Omega)}= \int_{\supp(\nabla\Theta_N)} | \nabla \Theta_N|^2 | \nabla \varphi_s|^2 \leq \dfrac C{N^2} N \norm{\nabla \varphi_s}^2_{L^2((0,1)^2)}.$$
       The other term can be estimated in a similar manner.
			
			 Using \eqref{notation}, Lemma \ref{lem:notation} and \eqref{*}, 
			\begin{eqnarray*}
			(\hat{V} Ku)_{k_j+\tilde{k}}[\varphi] &=& \langle (\phi_\ch^{-1} \hat V  f)(\cdot,k),\varphi\rangle_{H^1} \\
			&=& \langle(\phi_{k_j+\tilde{k}}^{-1}  (\hat{V} Ku)_{k_j+\tilde{k}},\varphi\rangle_{H^1}\\
			&=& \llangle  ( V (\lo+1)^{-1} Ku)(\cdot,k_j+\tilde{k}),\varphi\rrangle_{H^1},
			\end{eqnarray*}
			which implies that
			\bea
			(\hat{V} Ku)_{k_j+\tilde{k}}[\varphi] &=& \frac{1}{\sqrt{2\pi}} \lim_{N\to\infty} \llangle (\lo+1)^{-1} K u),\Theta_N\varphi \rrangle_{H^1(\Omega)}\\
			&=& \frac{1}{\sqrt{2\pi}} \lim_{N\to\infty} Ku [\Theta_N\varphi_s]\  = \ \frac{1}{\sqrt{2\pi}} Ku [\varphi_s],
			\eea
			where the last equality follows from compactness of the support of $Ku$. Equation \eqref{fversions4} now follows from \eqref{fversions2}. To obtain \eqref{eq:ian}, we use Remark \ref{rem:3}.
\end{proof}

\begin{lemma}\label{flip} Let $\cL$ be the space defined in \eqref{def:L}.
For $u\in\cL$ the function
$f(\tilde k, u)$ satisfies the estimate
$$|f(\tilde{k},u)| \leq C |\tilde{k}|^2 \norm{\frac{1}{\eps_0}-\frac{1}{\eps_1}}_\infty \norm{u}_{\ck}^2.$$
\end{lemma}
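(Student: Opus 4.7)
The plan is to use the representation \eqref{fversions4} of $f(\tilde k,u)$ as a finite sum of terms $(\lambda_s(k_j+\tilde k)+1)^{-2}|Ku[\varphi_s(\cdot,k_j+\tilde k)]|^2$, show that each summand vanishes at $\tilde k = 0$ as a consequence of $u\in\cL$, and then extract the factor $|\tilde k|^2$ from the real-analytic dependence of $\varphi_s(\cdot,k)$ on $k$, while the factor $\norm{\frac{1}{\eps_0}-\frac{1}{\eps_1}}_\infty$ is supplied by Lemma~\ref{lem12}.

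First I verify vanishing at $\tilde k=0$. For any $s\in S_{k_j}$ one has $(s,k_j)\in\Sigma$, so $(s,k_j)=(s_{j'},k_{j'})$ for some index $j'$ and $\varphi_s(\cdot,k_j)=\sqrt{\Lambda_1+1}\,\psi_{j'}$ by the definition $\psi_s(\cdot,k)=\varphi_s(\cdot,k)/\sqrt{\lambda_s(k)+1}$. Taking complex conjugates in \eqref{eq:Ku} gives
\[
Ku[\varphi_s(\cdot,k_j)]=\sqrt{\Lambda_1+1}\,\overline{[(\lo-\lp)\psi_{j'}][\gp u]}=0
\]
by the definition \eqref{def:L} of $\cL$. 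Hence $Ku[\varphi_s(\cdot,k_j+\tilde k)]=Ku[\varphi_s(\cdot,k_j+\tilde k)-\varphi_s(\cdot,k_j)]$.

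Next I fix a smooth cutoff $\theta$ depending only on $y$, compactly supported and identically one on a neighbourhood of $\supp(\eps_0-\eps_1)$, as in \eqref{eq:theta}. By Remark~\ref{rem:3} the action $Ku[\cdot]$ only sees the gradient on $\supp(\eps_0-\eps_1)$, so
\[
Ku[\varphi_s(\cdot,k_j+\tilde k)]=Ku\bigl[\theta\bigl(\varphi_s(\cdot,k_j+\tilde k)-\varphi_s(\cdot,k_j)\bigr)\bigr],
\]
and the cutoff argument lies in $H^1_c(\Omega)\subset H^1_{qp}(\Omega)$ regardless of the fact that the two Bloch modes have different $\hat{\by}$-quasi-momenta. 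The $H^{-1}_{qp}$--$H^1_{qp}$ duality pairing then gives
\[
|Ku[\varphi_s(\cdot,k_j+\tilde k)]|\leq \norm{Ku}_{H^{-1}}\,\norm{\theta\bigl(\varphi_s(\cdot,k_j+\tilde k)-\varphi_s(\cdot,k_j)\bigr)}_{H^1}.
\]
Real-analyticity of the Bloch functions in $k$ — more precisely, after extracting the factor $e^{iky}$ so that the resulting $1$-periodic Bloch mode depends analytically on $k$ with values in $H^1((0,1)^2)$, cf.~the discussion after \eqref{def_band_functions} — combined with the compactness of $\supp\theta$ yields a Lipschitz estimate $\norm{\theta(\varphi_s(\cdot,k_j+\tilde k)-\varphi_s(\cdot,k_j))}_{H^1}\leq C|\tilde k|$, the constant being uniform over the finite index set $\Sigma$ (Lemma~\ref{finite}). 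For the $H^{-1}$-norm of $Ku$, Lemma~\ref{lem12}(ii) combined with Lemma~\ref{lem12}(i) yields
\[
\norm{Ku}_{H^{-1}}^2\leq\norm{K}\norm{u}_\ck^2\leq\norm{\gp}_{H^{-1}\to H^1}\norm{\frac{1}{\eps_0}-\frac{1}{\eps_1}}_\infty\norm{u}_\ck^2,
\]
contributing exactly the single power of $\norm{\frac{1}{\eps_0}-\frac{1}{\eps_1}}_\infty$ required (and a factor $\norm{\gp}_{H^{-1}\to H^1}$ that stays bounded for small perturbations by Lemma~\ref{lem12}(iii)).

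Combining the two bounds and noting that $(\lambda_s(k_j+\tilde k)+1)^{-2}$ is bounded uniformly for $\tilde k$ in a fixed neighbourhood of $0$ (since $\lambda_s(k_j)=\Lambda_1>-1$ and $\lambda_s$ is continuous), each of the finitely many summands in \eqref{fversions4} is bounded by $C|\tilde k|^2\norm{\frac{1}{\eps_0}-\frac{1}{\eps_1}}_\infty\norm{u}_\ck^2$, and summing gives the claim. The most delicate point is ensuring that the Lipschitz constant in the real-analyticity step can be taken uniform over the finite index set of pairs $(j,s\in S_{k_j})$ and over a $y$-strip large enough to contain $\supp\theta$; this is secured by the reduction to the $1$-periodic factor $w_s(\cdot,k)$ and the compactness of $\supp\theta$ in the $\hat{\by}$-direction.
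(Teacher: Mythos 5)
Your proof is correct, and its skeleton coincides with the paper's: both start from the representation \eqref{fversions4}, use $u\in\cL$ together with \eqref{eq:Ku} to replace $\varphi_s(\cdot,k_j+\tilde k)$ by the difference $\varphi_s(\cdot,k_j+\tilde k)-\varphi_s(\cdot,k_j)$, extract the factor $|\tilde k|$ from real-analyticity of the Bloch functions, and obtain the factor $\norm{\frac{1}{\eps_0}-\frac{1}{\eps_1}}_\infty$ from Lemma~\ref{lem12}. Where you genuinely diverge is the Cauchy--Schwarz step. The paper exploits the eigenvalue equation $(L_k+1)\varphi_s=(\lambda_s(k)+1)\varphi_s$ to rewrite $(\lambda_s(k_j+\tilde k)+1)^{-1}Ku[\varphi_s(\cdot,k_j+\tilde k)]$ as the inner product $\llangle Ku,\varphi_s(\cdot,k_j+\tilde k)\rrangle_{H^{-1}((0,1)^2)}$, so that only Lipschitz continuity of $k\mapsto\varphi_s(\cdot,k)$ in the $H^{-1}((0,1)^2)$-norm is needed --- precisely the topology in which the $\varphi_s(k)$ are normalized and analytic --- and no cutoff is required. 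You instead keep the dual pairing, insert the cutoff $\theta$ (the device from the proof of Lemma~\ref{codim}, cf.~\eqref{eq:theta}) to land in $H^1_c(\Omega)\subset H^1_{qp}(\Omega)$, and estimate via the $H^{-1}_{qp}(\Omega)$--$H^1_{qp}(\Omega)$ duality; this costs you the stronger input of $H^1$-valued Lipschitz continuity of the Bloch functions (which you correctly secure via the reduction to the $1$-periodic factor and the type-(A) analyticity, together with the phase mismatch $e^{i\tilde kp}$ across the finitely many cells meeting $\supp\theta$), while buying an argument that stays entirely in the global spaces on the strip, with Remark~\ref{rem:3} as the only Floquet-side input. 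Two small remarks: your conjugation in \eqref{eq:Ku} is the correct reading of \eqref{def1} (the paper's display omits it, harmlessly, since the coefficient is real), and your restriction of the bound on $(\lambda_s(k_j+\tilde k)+1)^{-2}$ to a neighbourhood of $\tilde k=0$ is unnecessary: for $s\in S$, Corollary~\ref{cor:1} gives $\lambda_s\geq\Lambda_1$ on all of $[-\pi,\pi]$, so that bound, like the Lipschitz constant obtained from analyticity on the compact interval, is in fact global.
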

\begin{proof}
We use \eqref{fversions3}. First note the following:
\bea
\dfrac{Ku [\varphi_s(\cdot,k_j+\tilde{k})]}{\lambda_s (k _j+\tilde k)+1} &=&\dfrac{\llangle Ku, (L_{k_j+\tilde{k}}+1)\varphi_s(\cdot,k_j+\tilde{k})\rrangle_{H^{-1}((0,1)^2)}}{(\lambda_s (k _j+\tilde k)+1)}
\ =\ \llangle Ku,\varphi_s(\cdot,k_j+\tilde{k})\rrangle_{H^{-1}((0,1)^2)}.
\eea

In particular, using \eqref{eq:Ku}, for $u\in\cL$ we obtain
\bea
\dfrac1{(\lambda_s (k _j+\tilde k)+1)} Ku [\varphi_s(\cdot,k_j+\tilde{k})]
&=&\llangle Ku,\varphi_s(\cdot,k_j+\tilde{k})-\varphi_s(\cdot,k_j)\rrangle_{H^{-1}((0,1)^2)}.
\eea
As the $\varphi_s$ depend analytically on $k$, 
$$\norm{\varphi_s(\cdot,k_j+\tilde{k})-\varphi_s(\cdot,k_j)}_{H^{-1}((0,1)^2)}\leq C|\tilde{k}|,$$
and we get for $u\in\cL$ that 
\bea
|f(\tilde{k},u)| &\leq& C |\tilde{k}|^2 \norm{Ku}_{H^{-1}((0,1)^2)}^2 \ \leq \ C |\tilde{k}|^2 \norm{\frac{1}{\eps_0}-\frac{1}{\eps_1}}_\infty \norm{u}_{\ck}^2,
\eea
completing the proof.
\end{proof}

\begin{lemma}\label{flowerbound}
 There exists $C>0$ such that  $f(0,u)\geq C\norm{u}_\ck^2$ for  all $u\in \cL^\perp = \Span \{ P\phi(\theta \psi_j): j=1\dots n \}$, where 
 $\theta\in C^\infty(\Omega)$ is any function  compactly supported in the $\bf{\hat{y}}$-direction with $\theta=1$ on $[0,1]^2$.
\end{lemma}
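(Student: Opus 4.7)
The plan is to write $f(0,u)$ as a positive linear combination of the squared $\ck$-inner products $|\langle u, \Psi_l\rangle_\ck|^2$ for $l=1,\dots,n$, where $\Psi_l := P\phi(\theta\psi_l)$ spans $\cL^\perp$, and then to appeal to finite-dimensionality.

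I would begin by specializing the representation \eqref{eq:ian} to $\tilde k = 0$. Every pair $(s, k_j)$ appearing there lies in $\Sigma$, so $\lambda_s(k_j) = \Lambda_1$; using the normalization $\varphi_s(\cdot, k_j) = \sqrt{\Lambda_1+1}\,\psi_s(\cdot, k_j)$ to convert $\varphi$ to $\psi$ absorbs one factor of $\Lambda_1+1$ and yields
\[
f(0,u) = \frac{1}{\sqrt{2\pi}\,(\Lambda_1+1)} \sum_{j=1}^n \sum_{s\in S_{k_j}} \bigl|[(\lo-\lp)\psi_s(\cdot,k_j)][\gp u]\bigr|^2.
\]
Each pair $(s, k_j)$ appearing here equals some $(s_l, k_l)\in\Sigma$, so the corresponding term is $|[(\lo-\lp)\psi_l][\gp u]|^2$; by identity \eqref{eq:theta} from the proof of Lemma \ref{codim} this equals $|\langle u,\Psi_l\rangle_\ck|^2$. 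Moreover, for any fixed $l\in\{1,\dots,n\}$, choosing $j=l$ and $s=s_l\in S_{k_l}$ shows that $\Psi_l$ does occur in the sum (at least once), so dropping the non-negative duplicates yields
\[
f(0,u) \geq \frac{1}{\sqrt{2\pi}\,(\Lambda_1+1)} \sum_{l=1}^n |\langle u, \Psi_l\rangle_\ck|^2.
\]

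The rest is a finite-dimensional argument on $\cL^\perp = \Span\{\Psi_1,\dots,\Psi_n\}$. By Lemma \ref{codim} these $n$ vectors are linearly independent and form a basis of this $n$-dimensional subspace. If $u\in\cL^\perp$ satisfies $\langle u, \Psi_l\rangle_\ck = 0$ for all $l$, then $u$ is orthogonal to a spanning set of $\cL^\perp$, so $u\in\cL\cap\cL^\perp = \{0\}$. The continuous non-negative quadratic form $u\mapsto\sum_l|\langle u,\Psi_l\rangle_\ck|^2$ is therefore strictly positive on the unit sphere of the finite-dimensional space $\cL^\perp$, and by compactness attains a positive minimum $c>0$ there, giving $\sum_l|\langle u,\Psi_l\rangle_\ck|^2\geq c\|u\|_\ck^2$ on $\cL^\perp$, as required.

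I expect no genuine difficulty here; the only slightly delicate point is the combinatorial bookkeeping in the first step, where one must verify that each of the $n$ vectors $\Psi_l$ really does show up in the double sum $\sum_{j=1}^n\sum_{s\in S_{k_j}}$, so that passing to a sum over $l=1,\dots,n$ does not discard any basis vector. Once this is established, the remainder of the argument is just the standard fact that a positive-definite symmetric quadratic form on a finite-dimensional Hilbert space is equivalent to the norm squared.
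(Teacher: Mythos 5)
Your proposal is correct and follows essentially the same route as the paper: both specialize \eqref{eq:ian} to $\tilde k=0$, use \eqref{eq:theta} to rewrite $f(0,u)$ in terms of the quadratic form $\sum_l |\langle u,P\phi(\theta\psi_l)\rangle_\ck|^2$ on $\Span\{P\phi(\theta\psi_j)\}$, and conclude from the linear independence established in the proof of Lemma \ref{codim} that this positive definite form dominates $\norm{u}_\ck^2$. The only difference is cosmetic — the paper extracts an explicit constant $\lambda_{\min}(G)/\lambda_{\max}(\widetilde G)$ from the Gram matrix, while you invoke compactness of the unit sphere in the finite-dimensional space — and your bookkeeping for possible duplicate terms in the double sum (when several bands meet at the same $k_j$) is, if anything, slightly more careful than the equality stated in the paper, since you only need the inequality it implies.
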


\begin{proof}
Using \eqref{eq:ian}  and \eqref{eq:theta}, we have 
\bea 
     f(0,u)&=& \frac{1}{\sqrt{2\pi}}\sum_{j=1}^n\sum _{s \in S_{k_j} } \dfrac1{(\Lambda_1+1)^2}\left|[(\lo-\lp)\varphi_{s}(\cdot,k_j)][\gp u]\right|^2 \\
				&=& \frac{1}{\sqrt{2\pi}}\sum_{j=1}^n \dfrac1{\Lambda_1+1}\left|\llangle u, P\phi(\theta\psi_j)\rrangle_\ck\right|^2.
\eea
Now let $u=  \sum_{\mu=1}^n\alpha_\mu  P\phi(\theta \Psi_\mu)$ and set $\theta_\mu=P\phi(\theta \Psi_\mu)$. Then
\bea 
		f(0,u) 
			&=& \frac{1}{\sqrt{2\pi}(\Lambda_1+1)}
			\sum_{j} \sum_{\mu,\nu}    \alpha_\mu\overline{\alpha_\nu} \llangle  \theta_\mu,  \theta_j\rrangle_\ck	
		\llangle \theta_j, \theta_\nu\rrangle_\ck\  = \ \frac{ \alpha^*G\alpha}{\sqrt{2\pi}(\Lambda_1+1)}\ \geq \ \frac{\lambda_{min}(G)\norm{\alpha}^2}{\sqrt{2\pi}(\Lambda_1+1)},
\eea
where $G$ is an $n\times n$-matrix with entries
\bea 
     G_{\mu,\nu}=  &\sum_{j}  \llangle  \theta_\mu,  \theta_j\rrangle_\ck	
		\llangle \theta_j, \theta_\nu\rrangle_\ck .
     \eea
       Then $G=\widetilde{G}^2$ where $\widetilde{G}_{\gamma, \beta} =\llangle \theta_\gamma ,  \theta_\beta \rrangle_\ck$.
By the proof of Lemma \ref{codim}, the set $\{\theta_j: j=1,...,n\}$ is linearly independent, so 
 $\widetilde{G}$ is a positive definite Hermitian matrix and also its square $G$ is.
  
Now,
$$\norm{u}^2_\ck=\la Ku,u \ra_{H^{-1}} = \sum_{i,j} \alpha_i\overline{\alpha_j}\la K \theta_i,\theta _j\ra_{H^{-1}} = \sum_{i,j} \alpha_i\overline{\alpha_j}\la  \theta_i,\theta _j\ra_{\ck} =  \alpha^* \widetilde{G}\alpha\ \leq\ \lambda_{max}(\widetilde{G})\norm{\alpha}^2.$$

Thus, 
$$f(0,u)\geq \frac{1}{\sqrt{2\pi}(\Lambda_1+1)} \frac{\lambda_{min}(G)}{\lambda_{max}(\widetilde{G})} \norm{u}^2_\ck.$$
\end{proof}

We now state the main result of this section.

\begin{theorem}\label{upper} 
Let Assumptions \ref{asseps}, \ref{asnc} and \ref{as1} hold. Then there exists $c>0$
such that if $\norm{\frac{1}{\eps_0}-\frac{1}{\eps_1}}_\infty<c$, then the operator $\lp $ has at most {$n=|\Sigma|$} eigenvalues in the spectral gap $(\Lambda_0,\Lambda_1)$ of the operator $\lo$.
\end{theorem}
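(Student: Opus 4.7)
The plan is to use the Birman--Schwinger-type reformulation of Lemma \ref{lemmavar}: eigenvalues of $\lp$ in $(\Lambda_0,\Lambda_1)$ correspond bijectively to values of $\mu=(\lambda+1)^{-1}$ in the gap $((\Lambda_1+1)^{-1},(\Lambda_0+1)^{-1})$ at which $-1$ is an eigenvalue of $A_\mu$. Since each $\kappa_m(\mu)$ is continuous and strictly increasing in $\mu$ (Lemma \ref{lem:cont}), each such function attains the value $-1$ at most once. It therefore suffices to show that $\kappa_{n+1}(\mu)>-1$ for every $\mu$ in the gap whenever $\norm{\frac{1}{\eps_0}-\frac{1}{\eps_1}}_\infty$ is sufficiently small.

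Applying the min-max formula \eqref{defkappan} with $L=\cL$, which by Lemma \ref{codim} has codimension exactly $n$, I obtain $\kappa_{n+1}(\mu) \geq \inf_{u\in\cL} \la u, A_\mu u\ra_\ck / \la u, u\ra_\ck$. By Proposition \ref{prop:FB}\eqref{RQ}, the numerator admits the Floquet--Bloch representation
\[
\la u, A_\mu u\ra_\ck = \int_{-\pi}^\pi \sum_s \frac{1}{1-\mu(\lambda_s(k)+1)}\,\frac{1}{\lambda_s(k)+1}\,\bigl|\la \psi_s(k),\phi_k^{-1}(\hat V Ku)_k\ra_{H^1}\bigr|^2\, dk,
\]
and only the indices $(s,k)$ with $\lambda_s(k)\geq\Lambda_1$ produce negative contributions. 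I split these into a \emph{regular} region (comprising $s\notin S$, together with $s\in S$ but $k$ at distance at least $\delta$ from every $k_j$ with $(s,k_j)\in\Sigma$) and a \emph{singular} region consisting of $|\tilde k|<\delta$ around each $k_j$ for $s\in S_{k_j}$.

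On the regular region, Corollary \ref{cor:1}, Lemma \ref{lem:5}, continuity of the band functions, finiteness of $\Sigma$, and $\mu\geq(\Lambda_1+1)^{-1}$ combine to give a lower bound on $|1-\mu(\lambda_s(k)+1)|$ that is uniform in $\mu$. Hence Lemma \ref{lem:unif} bounds this contribution below by $-C\norm{\frac{1}{\eps_0}-\frac{1}{\eps_1}}_\infty\norm{u}_\ck^2$. On the singular region, Assumption \ref{as1} yields
\[
 |1-\mu(\lambda_{s}(k_j+\tilde k)+1)| \;\geq\; \mu\alpha|\tilde k|^2 \;\geq\; \tfrac{\alpha}{\Lambda_1+1}|\tilde k|^2,
\]
so, using representation \eqref{fversions2} to identify the remaining summand as an integrand of $f$, the singular contribution is bounded in modulus by $\tfrac{\Lambda_1+1}{\alpha}\int_{|\tilde k|<\delta}\frac{f(\tilde k, u)}{|\tilde k|^2}\,d\tilde k$.

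The decisive cancellation comes from the constraint $u\in\cL$: Lemma \ref{flip} furnishes the matching upper bound $|f(\tilde k,u)|\leq C|\tilde k|^2\norm{\frac{1}{\eps_0}-\frac{1}{\eps_1}}_\infty\norm{u}_\ck^2$, which precisely removes the $|\tilde k|^{-2}$ singularity and leaves the singular contribution likewise bounded below by $-C'\norm{\frac{1}{\eps_0}-\frac{1}{\eps_1}}_\infty\norm{u}_\ck^2$. Adding the two estimates and choosing $c$ so that the resulting prefactor is strictly less than $1$ yields $\kappa_{n+1}(\mu)>-1$, completing the argument. The main obstacle is control on the singular region near $\Sigma$, where the denominator $1-\mu(\lambda_s(k)+1)$ degenerates simultaneously as $\mu\searrow(\Lambda_1+1)^{-1}$ and $k\to k_j$; the argument only closes because the non-degeneracy Assumption \ref{as1} provides a quadratic lower bound on this denominator, while the $n$ linear conditions defining $\cL$, which exactly match the $n$ elements of $\Sigma$ via Lemma \ref{codim}, provide a matching quadratic upper bound on $f$.
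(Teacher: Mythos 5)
Your proposal is correct and follows essentially the same route as the paper's proof: the Rayleigh-quotient representation from Proposition \ref{prop:FB}\eqref{RQ}, the same regular/singular splitting controlled by Corollary \ref{cor:1}, Lemma \ref{lem:5} and Lemma \ref{lem:unif}, the same interplay of Assumption \ref{as1} with the quadratic bound on $f$ from Lemma \ref{flip} for $u\in\cL$, and the same min-max conclusion $\kappa_{n+1}(\mu)>-1$ via Lemma \ref{codim} and Lemma \ref{lemmavar}. The only cosmetic differences are that you bound the singular denominator pointwise by $|1-\mu(\lambda_s(k_j+\tilde k)+1)|\geq\mu\alpha|\tilde k|^2$ instead of keeping $1-\mu(\Lambda_1+\alpha\tilde k^2+1)$ inside the integral as the paper does, and that you make explicit the strict-monotonicity counting (each $\kappa_m$ attains $-1$ at most once) that the paper leaves implicit.
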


\begin{proof}
We start by noting an equality for the Rayleigh quotient. Let $u\in \ck$. Then by using Proposition \ref{prop:FB} \eqref{RQ}, 
  \begin{eqnarray}\label{Rayleigh}
  \llangle A_\mu u,u\rrangle_{\ck} = \int_{-\pi}^\pi \sum_{s=1}^\infty\dfrac1{1-\mu(\lambda_s(k)+1)}     \dfrac1{\lambda_s(k)+1} \left|  \llangle\psi_s(k),\phi^{-1}_k (\hat V K u)_k\rrangle_{H^1(0,1)^2}\right|^2 dk.
  \end{eqnarray}
for $\mu\in ((\Lambda_1+1)^{-1}, (\Lambda_0+1)^{-1})$.
	
By continuity of the band function $\lambda_s$ we have, for each $s \in \N$, either   $\lambda_s({ k}) \leq \Lambda_0$  for all ${ k} \in  [-\pi,\pi] $
 or
$\lambda_s({ k})\geq \Lambda_1$ for all ${ k} \in  [-\pi,\pi] $. In the first case, 
$1/[(1-\mu(\lambda_s(k)+1))(\lambda_s(k)+1)]\geq 0$ while  in the second case, we  have  the reverse inequality.
Therefore, with $s_0$ as in Corollary \ref{cor:1}, 
\ben\label{rayleighlow}
\llangle A_\mu u,u\rrangle_{\ck} &\geq&
\int_{-\pi}^\pi \sum_{s\geq s_0} \dfrac1{1-\mu(\lambda_s(k)+1)}     \dfrac1{\lambda_s(k)+1} \left|  \llangle\psi_s(k),\phi^{-1}_k (\hat V K u)_k\rrangle_{H^1(0,1)^2}\right|^2 dk\\ \nonumber
&=& \int_{-\pi}^\pi \sum_{s\in S} \dfrac1{1-\mu(\lambda_s(k)+1)}     \dfrac1{\lambda_s(k)+1} \left|  \llangle\psi_s(k),\phi^{-1}_k (\hat V K u)_k\rrangle_{H^1(0,1)^2}\right|^2 dk\\ \nonumber
&& + \int_{-\pi}^\pi \sum_{s\geq s_0, s\not\in S} \dfrac1{1-\mu(\lambda_s(k)+1)}     \dfrac1{\lambda_s(k)+1} \left|  \llangle\psi_s(k),\phi^{-1}_k (\hat V K u)_k\rrangle_{H^1(0,1)^2}\right|^2 dk.
 \een

We first consider the second sum. By Lemma \ref{lem:5} and Lemma \ref{lem:unif}, it can be bounded below by
\ben\label{norm_u_curly_K}
 -C \norm{\frac{1}{\eps_0}-\frac{1}{\eps_1}}_\infty \norm{u}_{\ck}^2.
\een
 
Now, we turn our attention to the first sum. We remind the reader that the set $\Sigma$ consists of the elements $(s_j, k_j)$ with $j=1, \ldots, n$.
We split the domain of integration into balls of radius $\delta$ around the points ${ k}_j$ and the complement of the union of these balls in $[-\pi,\pi]$, where $\delta$ is chosen
as in Assumption \ref{as1}. 
Then

\bea
&& \int_{-\pi}^\pi \sum_{s\in S} \dfrac1{1-\mu(\lambda_s(k)+1)}     \dfrac1{\lambda_s(k)+1} \left|  \llangle\psi_s(k),\phi^{-1}_k (\hat V K u)_k\rrangle_{H^1(0,1)^2}\right|^2 dk \\
&=&   \sum_{s\in S} \left[\sum_{
\stackrel {j=1}{ s_j=s} }
^n
 \int_{B_\delta({ k}_j)}
 \dfrac1{1-\mu(\lambda_s(k)+1)}     \dfrac1{\lambda_s(k)+1} \left|  \llangle\psi_s(k),\phi^{-1}_k (\hat V K u)_k\rrangle_{H^1(0,1)^2}\right|^2 dk \right.   \\
&&  + \left. \int_{R_s}
 \dfrac1{1-\mu(\lambda_s(k)+1)}     \dfrac1{\lambda_s(k)+1} \left|  \llangle\psi_s(k),\phi^{-1}_k (\hat V K u)_k\rrangle_{H^1(0,1)^2}\right|^2 dk
   \right]
\eea
where  $R_s:=[-\pi,\pi] \backslash \cup_{\stackrel{j=1}{ s_j=s}}^{\ n} B_\delta({ k}_j) $. On $R_s$  we again use that $(1-\mu(\lambda_s(k)+1))^{-1}  $
is uniformly bounded (with respect to $s$ and ${k}$), since the continuous function $\lambda_s(\cdot)-\Lambda_1$  is positive and therefore positively bounded away from $0$ on the compact set $R_s$. Using Lemma \ref{lem:unif} again, the sum of the last integrals can be bounded below by \eqref{norm_u_curly_K}.

 It remains to estimate the sum of the integrals over $B_\delta({ k}_j)$. Exchanging the order of the sums  which can only add negative terms (if $s\in S_{ { k}_j} $  for several $j$) and then shifting the integration variable yields
\bea
&& \sum_{s\in S} \sum_{
\stackrel {j=1}{ s_j=s} }
^n
 \int_{B_\delta({ k}_j)}
 \dfrac1{1-\mu(\lambda_s(k)+1)}     \dfrac1{\lambda_s(k)+1} \left|  \llangle\psi_s(k),\phi^{-1}_k (\hat V K u)_k\rrangle_{H^1(0,1)^2}\right|^2 dk \\
 &\geq&  \sum_{j=1}^n   \sum_{s\in S_{{ k}_j} }
 \int_{B_\delta({ k}_j)}
 \dfrac1{1-\mu(\lambda_s(k)+1)}     \dfrac1{\lambda_s(k)+1} \left|  \llangle\psi_s(k),\phi^{-1}_k (\hat V K u)_k\rrangle_{H^1(0,1)^2}\right|^2 dk \\
 &=&    \sum_{j=1}^n   \sum_{s\in S_{{ k}_j} }
  \int_{B_\delta(0)}
 \dfrac1{1-\mu(\lambda_s({ k}_j+\widetilde { k})+1)}     \dfrac1{\lambda_s({ k}_j+\widetilde { k})+1} \left|  \llangle\psi_s({ k}_j+\widetilde { k}),\phi^{-1}_{{ k}_j+\widetilde { k}} (\hat V K u({ k}_j+\widetilde { k}))\rrangle_{H^1(0,1)^2}\right|^2 d\widetilde { k}\\
  &=&    \sum_{j=1}^n   \sum_{s\in S_{{ k}_j} }
  \int_{B_\delta(0)}
 \dfrac1{1-\mu(\lambda_s({ k}_j+\widetilde { k})+1)}     \dfrac1{(\lambda_s({ k}_j+\widetilde { k})+1)^2} \left|  \llangle\varphi_s({ k}_j+\widetilde { k}),\phi^{-1}_{{ k}_j+\widetilde { k}} (\hat V K u({ k}_j+\widetilde { k}))\rrangle_{H^1(0,1)^2}\right|^2 d\widetilde { k}\\
 &\geq& \int_{B_\delta(0)} \frac{1}{1-\mu(\Lambda_1+\alpha \tilde{k}^{2}+1)} f(\tilde{k},u) d\tilde{k},
\eea
where in the last step we have used Assumption \ref{as1} and Equation \eqref{fversions2}.
Now, for $u\in\cL$, by Lemma \ref{flip}, 
\bea
 \int_{B_\delta(0)} \frac{1}{1-\mu(\Lambda_1+\alpha \tilde{k}^{2}+1)} f(\tilde{k},u) d\tilde{k}
 &\geq& C \int_{B_\delta(0)} \frac{|\tilde{k}|^2 }{1-\mu(\Lambda_1+\alpha \tilde{k}^{2}+1)} d\tilde{k}\ \norm{\frac{1}{\eps_0}-\frac{1}{\eps_1}}_\infty \norm{u}_{\ck}^2\\
 &\geq& -\tilde{C} \norm{\frac{1}{\eps_0}-\frac{1}{\eps_1}}_\infty \norm{u}_{\ck}^2.
\eea

Combining all our results, we get that for $u\in\cL$ 
\bea
\llangle A_\mu u,u\rrangle_{\ck}\geq -C \norm{\frac{1}{\eps_0}-\frac{1}{\eps_1}}_\infty \norm{u}_{\ck}^2
\eea
for some $C>0$, independent of $\mu \in  ((\Lambda_1+1)^{-1}, (\Lambda_0+1)^{-1})$.
Therefore, if $C\norm{\frac{1}{\eps_0}-\frac{1}{\eps_1}}_\infty<1$  the Rayleigh quotient is larger than $-1$ on the space $\cL$ with
 $\codim \cL=n$.
 By the variational characterisation of the eigenvalues in \eqref{defkappan} we have $\kappa_{n+1}(\mu) > -1$ for all $\mu \in  ((\Lambda_1+1)^{-1}, (\Lambda_0+1)^{-1})$. Therefore,  using Lemma \ref{lemmavar}, we see that
 no more than $n$ eigenvalues of the operator $\lp $ are created in the gap.
\end{proof}

\section{Lower bound on the number of eigenvalues}\label{sec_LowerBound}

\begin{lemma} \label{lem8n} Let $\mu\in ((\Lambda_1+1)^{-1}, (\Lambda_0+1)^{-1})$. For all $u\in\ck$,
$$\llangle A_\mu u,u\rrangle_{\ck}\geq \dfrac{\sqrt{2\pi}}{1-\mu (\Lambda_1+1)} \norm{Ku}^2_{H^{-1}}.$$
holds.
\end{lemma}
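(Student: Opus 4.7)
The plan is to rewrite $\langle A_\mu u,u\rangle_\ck$ as the integral appearing in Proposition~\ref{prop:FB}(\ref{RQ}), then bound the integrand pointwise (in $s$ and $k$) by pulling out the worst-case denominator $1-\mu(\Lambda_1+1)$, and finally identify the remaining integral with $\|Ku\|_{H^{-1}}^2$ via the Parseval-type formula in Proposition~\ref{prop:FB}(\ref{expansionlemma}).

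First I would note that since $Ku\in\ck$ and $K$ is symmetric on $H^{-1}_{qp}(\Omega)$ with $\ck=\overline{\Ran K}$ (so $K$ vanishes on $\ck^\perp=\ker K$, and in particular $KP=K$), one has
\[
\langle A_\mu u,u\rangle_\ck \;=\; \langle KP(I-\mu\go^{-1})^{-1}Ku,u\rangle_{H^{-1}} \;=\; \langle (I-\mu\go^{-1})^{-1}Ku,Ku\rangle_{H^{-1}}.
\]
Using identity \eqref{logo}, this is precisely the left-hand side of Proposition~\ref{prop:FB}(\ref{RQ}) with $u$ replaced by $u$, so
\[
\langle A_\mu u,u\rangle_\ck=\int_{-\pi}^{\pi}\sum_{s=1}^\infty\frac{1}{1-\mu(\lambda_s(k)+1)}\cdot\frac{1}{\lambda_s(k)+1}\bigl|\langle\psi_s(k),\phi_k^{-1}(\hat V Ku)_k\rangle_{H^1((0,1)^2)}\bigr|^2 dk.
\]

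Next I would establish the key pointwise inequality
\[
\frac{1}{1-\mu(\lambda_s(k)+1)} \;\geq\; \frac{1}{1-\mu(\Lambda_1+1)} \qquad \text{for all } s\in\N,\ k\in[-\pi,\pi].
\]
By Corollary~\ref{cor:1}, each band function satisfies either $\lambda_s(k)\leq\Lambda_0$ or $\lambda_s(k)\geq\Lambda_1$ for all $k$. In the first case the left-hand side is positive while the right-hand side is negative (since $\mu(\Lambda_1+1)>1$), so the inequality is trivial. In the second case both sides are negative, and the monotonicity $\mu(\lambda_s(k)+1)\geq\mu(\Lambda_1+1)$ forces $|1-\mu(\lambda_s(k)+1)|\geq|1-\mu(\Lambda_1+1)|$, yielding the inequality. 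Multiplying by the nonnegative quantity $\tfrac{1}{\lambda_s(k)+1}\bigl|\langle\psi_s(k),\phi_k^{-1}(\hat V Ku)_k\rangle\bigr|^2$ preserves the inequality term by term.

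Summing over $s$ and integrating over $k$, and pulling the (negative) constant $\tfrac{1}{1-\mu(\Lambda_1+1)}$ outside, we obtain
\[
\langle A_\mu u,u\rangle_\ck \;\geq\; \frac{1}{1-\mu(\Lambda_1+1)}\int_{-\pi}^\pi\sum_{s=1}^\infty\frac{1}{\lambda_s(k)+1}\bigl|\langle\phi_k^{-1}(\hat V Ku)_k,\psi_s(k)\rangle_{H^1((0,1)^2)}\bigr|^2 dk.
\]
Applying Proposition~\ref{prop:FB}(\ref{expansionlemma}) with $f=Ku$, the remaining integral equals $\sqrt{2\pi}\,\|Ku\|_{H^{-1}}^2$, which gives the claimed bound.

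The only step requiring genuine care is the sign bookkeeping: because $\mu>(\Lambda_1+1)^{-1}$, the constant $1-\mu(\Lambda_1+1)$ is negative, so the asserted lower bound is a nonpositive quantity and the direction of every inequality involving these denominators must be tracked carefully. There is no real analytical obstacle beyond this.
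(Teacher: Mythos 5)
Your proof is correct and takes essentially the same route as the paper: both rest on the identity $\llangle A_\mu u,u\rrangle_{\ck}=\llangle -\tfrac1\mu(\lo+1-\tfrac1\mu)^{-1}Ku,Ku\rrangle_{H^{-1}}$ from Proposition \ref{prop:FB} \eqref{RQ}, the bound $\tfrac{1}{1-\mu(\lambda_s(k)+1)}\geq\tfrac{1}{1-\mu(\Lambda_1+1)}$ justified by the dichotomy of Corollary \ref{cor:1}, and the Parseval-type identity of Proposition \ref{prop:FB} \eqref{expansionlemma} applied to $f=Ku$. The only cosmetic difference is that the paper first drops the nonnegative $s<s_0$ terms via \eqref{rayleighlow} and then reinstates them after inserting the negative constant, whereas you bound every term uniformly in one step; your sign bookkeeping in both cases is correct.
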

\begin{proof}
As in the proof of Theorem \ref{upper}, we have that \eqref{rayleighlow} holds for $u\in\ck$. This leads to the estimate  
 \bea
\llangle A_\mu u,u\rrangle_{\ck}  
&\geq& \dfrac1{1-\mu (\Lambda_1+1)} \int_{-\pi}^\pi \sum_{s\geq s_0}   \dfrac1{\lambda_s(k)+1} \left|  \llangle\psi_s(k),\phi^{-1}_k (\hat V K u)_k\rrangle_{H^1(0,1)^2}\right|^2 dk\\
&\geq& \dfrac1{1-\mu (\Lambda_1+1)} \int_{-\pi}^\pi \sum_{s}   \dfrac1{\lambda_s(k)+1} \left|  \llangle\psi_s(k),\phi^{-1}_k (\hat V K u)_k\rrangle_{H^1(0,1)^2}\right|^2 dk\\
&\geq& \dfrac{\sqrt{2\pi}}{1-\mu (\Lambda_1+1)} \norm{Ku}_{H^{-1}}^2 
\eea
 where the last inequality follows from  Proposition \ref{prop:FB} \eqref{expansionlemma}.
\end{proof}

\begin{cor} \label{cor1} 
Let $\mu\in ((\Lambda_1+1)^{-1}, (\Lambda_0+1)^{-1})$ and suppose that $\norm{\frac{1}{\eps_0}-\frac{1}{\eps_1}}_\infty$  sufficiently small. Then 
 $$\inf_{u\in\ck\setminus\{0\}}\dfrac{  \llangle  A_\mu u,u  \rrangle}{\norm{u}^2_{\ck}}>-1.$$
\end{cor}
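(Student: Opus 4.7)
The plan is to combine Lemma \ref{lem8n} with the norm bounds on $K$ from Lemma \ref{lem12}, observing that the constant in front of $\norm{Ku}^2_{H^{-1}}$ in Lemma \ref{lem8n} is \emph{negative} on the admissible range of $\mu$. Specifically, since $\mu > (\Lambda_1+1)^{-1}$, we have $1-\mu(\Lambda_1+1)<0$, so the prefactor $\frac{\sqrt{2\pi}}{1-\mu(\Lambda_1+1)}$ is a finite negative number.

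First I would apply Lemma \ref{lem8n} to get, for every $u \in \ck\setminus\{0\}$,
\[
\frac{\llangle A_\mu u, u\rrangle_\ck}{\norm{u}^2_\ck} \;\geq\; \frac{\sqrt{2\pi}}{1-\mu(\Lambda_1+1)} \cdot \frac{\norm{Ku}^2_{H^{-1}}}{\norm{u}^2_\ck}.
\]
Next I would insert Lemma \ref{lem12}(ii), $\norm{Ku}^2_{H^{-1}} \leq \norm{K}\,\norm{u}^2_\ck$. Because the prefactor is negative, this estimate flips sign and yields
\[
\frac{\llangle A_\mu u, u\rrangle_\ck}{\norm{u}^2_\ck} \;\geq\; \frac{\sqrt{2\pi}}{1-\mu(\Lambda_1+1)}\,\norm{K}.
\]
The right-hand side is independent of $u$.

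Finally, by Lemma \ref{lem12}(i) and (iii), whenever $\eta:=\norm{\tfrac{1}{\eps_0}-\tfrac{1}{\eps_1}}_\infty < 1/\norm{\go}_{H^{-1}\to H^1}$, we have
\[
\norm{K} \;\leq\; \frac{\eta\,\norm{\go}_{H^{-1}\to H^1}}{1-\eta\,\norm{\go}_{H^{-1}\to H^1}},
\]
which tends to $0$ as $\eta\to 0$. Hence, for the given fixed $\mu$, we can make $\|K\|$ so small that
\[
\frac{\sqrt{2\pi}}{1-\mu(\Lambda_1+1)}\,\norm{K} \;>\; -1,
\]
which gives the claim of the corollary.

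There is no real obstacle here; the one point to flag is the sign flip in the inequality from Lemma \ref{lem12}(ii), which is what forces the bound to go the right way and produces a strict lower bound greater than $-1$.
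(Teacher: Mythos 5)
Your proposal is correct and is exactly the paper's argument: the paper's proof of Corollary \ref{cor1} consists of the single line ``This follows from Lemma \ref{lem8n} together with Lemma \ref{lem12}'', and you have simply written out the details, correctly handling the sign flip in Lemma \ref{lem12}(ii) caused by the negative prefactor $\sqrt{2\pi}/(1-\mu(\Lambda_1+1))$ and correctly noting that the smallness threshold on $\norm{\frac{1}{\eps_0}-\frac{1}{\eps_1}}_\infty$ may depend on the fixed $\mu$, consistent with the statement.
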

\begin{proof}
This follows from Lemma \ref{lem8n} together with Lemma \ref{lem12}.
\end{proof}

\begin{remark}
This shows that for a fixed $\mu$ in the spectral gap, the size of the perturbation has to reach a threshold before it is possible for $\mu$ to lie in the spectrum.
\end{remark}

\begin{theorem} \label{lower} Let Assumptions \ref{asseps} and \ref{asnc} hold. For any $\eps_1$ such that $\norm{\frac{1}{\eps_0}-\frac{1}{\eps_1}}_\infty$ is sufficiently small,   at least $n=|\Sigma|$ eigenvalues are created in the spectral gap.
\end{theorem}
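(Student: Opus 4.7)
The plan is to complement the upper-bound argument via the same Birman--Schwinger-type reformulation (Lemma \ref{lemmavar}): eigenvalues of $\lp$ in $(\Lambda_0,\Lambda_1)$ correspond, with multiplicity, to values $\mu\in((\Lambda_1+1)^{-1},(\Lambda_0+1)^{-1})$ at which $-1$ is an eigenvalue of $A_\mu$, equivalently to crossings of $-1$ by the min--max values $\kappa_m(\mu)$ from \eqref{defkappan}. By Lemma \ref{lem:cont} each $\mu\mapsto\kappa_m(\mu)$ is continuous and increasing on the gap, and by Corollary \ref{cor1}, fixing $\mu^{**}\in((\Lambda_1+1)^{-1},(\Lambda_0+1)^{-1})$ and taking $\|\tfrac{1}{\eps_0}-\tfrac{1}{\eps_1}\|_\infty$ small enough forces $\kappa_1(\mu^{**})>-1$, hence $\kappa_m(\mu^{**})>-1$ for every $m$. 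Thus the lower bound reduces to exhibiting a single $\mu_\ast\in((\Lambda_1+1)^{-1},\mu^{**})$ at which $\kappa_n(\mu_\ast)\le -1$: intermediate-value applied to each of $\kappa_1,\dots,\kappa_n$ then yields $\mu_1,\dots,\mu_n$ (not necessarily distinct) with $\kappa_m(\mu_m)=-1$, producing at least $n$ eigenvalues of $\lp$ in the gap counted with multiplicity.

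By the min--max formula \eqref{defkappan}, $\kappa_n(\mu_\ast)\le -1$ is equivalent to the existence of an $n$-dimensional subspace $M\subset\ck$ on which $\llangle A_{\mu_\ast}u,u\rrangle_\ck\le -\|u\|_\ck^2$. The natural candidates are wave-packet-like functions concentrated in quasi-momentum near each resonant point $k_j$; specifically, I would use
\[
v_j^{(L)}(x,y):=\theta_L(y)\,\psi_j(x,y),\qquad u_j^{(L)}:=P\phi\bigl(v_j^{(L)}\bigr)\in\ck,\qquad j=1,\dots,n,
\]
where $\theta_L$ is a smooth cutoff of scale $L$ in the $\hat{\by}$-direction, equal to $1$ on $[0,1]^2$. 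As $L\to\infty$ the partial Floquet transform of $v_j^{(L)}$ concentrates on band $s_j$ near $k=k_j$, while the Gram-matrix argument of Lemma \ref{flowerbound} (resting on the independence statement of Lemma \ref{codim}) guarantees that $\{u_j^{(L)}\}_{j=1}^n$ remains uniformly non-degenerate, so $M^{(L)}:=\Span\{u_j^{(L)}\}$ is genuinely $n$-dimensional.

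The main computation evaluates the Rayleigh quotient on $M^{(L)}$ using the spectral expansion of Proposition \ref{prop:FB}\eqref{RQ}. For $u=\sum_j\alpha_j u_j^{(L)}$, the sum decomposes into: (a) a resonant block from $s\in S$ and $k$ in a $\delta$-neighbourhood of the $k_j$'s, controlled above, up to lower-order errors as $L\to\infty$, by $-c\,\alpha^{\ast}G\alpha/(\mu(\Lambda_1+1)-1)$ with $G$ the positive-definite Gram matrix of Lemma \ref{flowerbound}; (b) contributions from bands with $s\notin S$, uniformly bounded using Lemma \ref{lem:unif}; and (c) cross terms between different $(s_j,k_j)$, which vanish as $L\to\infty$ because the Floquet supports of distinct $v_j^{(L)}$ become essentially disjoint (for pairs with $k_j\neq k_{j'}$ the peaks separate in $k$, and for pairs with $k_j=k_{j'}$ but $s_j\neq s_{j'}$ they separate via $L^2$-orthogonality of the $\psi_s(\cdot,k_j)$). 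Choosing first $L$ large to suppress (c) relative to the Gram-matrix contribution in (a), and then $\mu_\ast\searrow(\Lambda_1+1)^{-1}$ to blow up the resonant denominator, yields the required bound. The principal technical obstacle is precisely this off-diagonal analysis: ruling out cancellation between the $n$ wave packets inside the resonant window, preserving the uniform Gram-matrix lower bound of Lemma \ref{flowerbound} in the $L\to\infty$ limit, and keeping the non-resonant remainder bounded independently of how small $\mu(\Lambda_1+1)-1$ becomes.
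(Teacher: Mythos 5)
Your outer scaffolding coincides with the paper's: the Birman--Schwinger reduction (Lemma \ref{lemmavar}), continuity and monotonicity of $\kappa_m$ (Lemma \ref{lem:cont}), the anchor $\kappa_1(\mu'')>-1$ from Corollary \ref{cor1}, and the intermediate-value argument are all exactly how the paper concludes. The genuine gap is in your core construction: the quadratic form of $A_\mu$ depends on $u$ only through $Ku$ (see Proposition \ref{prop:FB} \eqref{RQ} and Remark \ref{rem:3}), and $Ku$ acts via $\bigl(\tfrac{1}{\eps_0}-\tfrac{1}{\eps_1}\bigr)\nabla \gp u$, i.e.\ it is supported in the \emph{fixed} compact support of the defect, no matter how you choose $u$. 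Consequently $k\mapsto(\hat V K u)_k[\psi_s(k)]$ is a trigonometric polynomial in $k$ of degree bounded by the number of periodicity cells meeting $\supp\bigl(\tfrac{1}{\eps_0}-\tfrac{1}{\eps_1}\bigr)$, uniformly in $u$ and in your scale $L$. So while the Floquet transform of $v_j^{(L)}=\theta_L\psi_j$ does concentrate near $k_j$, this concentration never transfers to the object that actually enters the spectral expansion: no choice of wave packets makes the resonant weight spike in a window of width $1/L$. Your claimed bound $-c\,\alpha^{\ast}G\alpha/(\mu(\Lambda_1+1)-1)$ for block (a) is therefore unattainable, and with your stated order of limits ($L$ fixed first, then $\mu_\ast\searrow(\Lambda_1+1)^{-1}$) the resonant block can diverge only like $(\mu(\Lambda_1+1)-1)^{-1/2}$; the off-diagonal analysis (c) is moot for the same reason.

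The repair is the paper's proof, and it needs no scaling parameter at all: take the fixed $n$-dimensional space $\cL^\perp=\Span\{P\phi(\theta\psi_j)\}$ with a fixed cutoff $\theta$ (Lemma \ref{codim}), use Lemma \ref{flowerbound} to get $f(0,u)\geq C\norm{u}_\ck^2$, and use continuity of $f$ together with compactness of the unit sphere of $\cL^\perp$ to obtain a uniform positive lower bound for $f(\tilde k,u)$ on $B_\delta(0)$. Analyticity alone (Assumption \ref{as1} is \emph{not} assumed in Theorem \ref{lower}) gives the upper bound $\lambda_s(k_j+\tilde k)\leq\Lambda_1+\beta\tilde k^2$, the non-resonant contributions are bounded via Lemma \ref{lem:unif} uniformly for $\mu$ away from $(\Lambda_0+1)^{-1}$, and then the one-dimensional integral $\int_0^\delta\bigl(\mu(\Lambda_1+\beta\tilde k^2+1)-1\bigr)^{-1}d\tilde k=\bigl(\mu\beta(\mu(\Lambda_1+1)-1)\bigr)^{-1/2}\arctan\bigl(\delta\sqrt{\mu\beta/(\mu(\Lambda_1+1)-1)}\bigr)$ diverges as $\mu\searrow(\Lambda_1+1)^{-1}$. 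This $(\mu(\Lambda_1+1)-1)^{-1/2}$ divergence, integrated against a weight that is merely nonvanishing at $\tilde k=0$, already forces $\kappa_n(\mu)\to-\infty$ on the $n$-dimensional space; the quasi-momentum concentration you aimed for is both impossible in this framework and unnecessary. Note also that this is precisely where the one-dimensionality of the transverse Brillouin variable enters: in higher dimensions the corresponding integral stays bounded, which is why the fixed-test-function argument is special to line defects.
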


\begin{proof}
By Corollary \ref{cor1},
 if   $\norm{\frac{1}{\eps_0}-\frac{1}{\eps_1}}_\infty$ is sufficiently small, we  can find $\mu'\in((\Lambda_1+1)^{-1},(\Lambda_0+1)^{-1})$ such that
\beq
\label{upperkappa} \kappa_1(\mu')=\inf_{u\neq0} \dfrac{  \llangle  A_{\mu'} u,u  \rrangle_{\ck}}{\norm{u}^2_{\ck}}>-1.
\eeq

We next give an upper bound on the Rayleigh quotient using equality \eqref{Rayleigh} and
 decomposing the sum over $s\in \N$ into three parts:
one over $s<s_0$, one over $s\geq s_0$ with $s\not\in S$, and one over $s\in S$. (Note that $s\geq s_0$ for all $s \in S$).
By Lemma \ref{lem:unif} the first sum is bounded from above by  $C\norm{u}^2_{\ck}$  as long as $\mu$ stays away from $(\Lambda_0+1)^{-1}$.  The second sum is bounded from above by $0$.
Therefore,
\bea
 \llangle A_\mu u, u \rrangle_{\ck}
   &\leq& C \norm{u}^2_{\ck} + \int_{-\pi}^\pi \sum_{s\in S} \dfrac1{1-\mu(\lambda_s(k)+1)}     \dfrac1{\lambda_s(k)+1} \left|  \llangle\psi_s(k),\phi^{-1}_k (\hat V K u)_k\rrangle_{H^1(0,1)^2}\right|^2 dk  
\eea
Now we split up the integration over $[-\pi, \pi]$ into a part over the intervals $B_\delta(k_j)$ and a remainder, as before
in the proof of Theorem
\ref{upper}.
We get 
\bea &&\llangle A_\mu u, u \rrangle_{\ck}\\ 
	&&\leq C \norm{u}^2_{\ck} +  \sum_{s\in S}  \left[\sum_{\stackrel{j=1}{s_j=s}}^n  \int_{B_\delta(k_j)} 
	\dfrac1{1-\mu(\lambda_s(k)+1)}     \dfrac1{\lambda_s(k)+1} \left|  \llangle\psi_s(k),\phi^{-1}_k (\hat V K u)_k\rrangle_{H^1(0,1)^2}\right|^2 dk 
	\right. \\
&&\left.  + \int_{R_s} 
	\dfrac1{1-\mu(\lambda_s(k)+1)}     \dfrac1{\lambda_s(k)+1} \left|  \llangle\psi_s(k),\phi^{-1}_k (\hat V K u)_k\rrangle_{H^1(0,1)^2}\right|^2 dk \right]
\eea
and using Lemma \ref{lem:unif} to estimate the integral over $R_s$, we  continue the estimate as follows:
\bea
	&&\leq C \norm{u}^2_{\ck} +  \sum_{s\in S}  \sum_{\stackrel{j=1}{s_j=s}}^n  \int_{B_\delta(k_j)} 
	\dfrac1{1-\mu(\lambda_s(k)+1)}     \dfrac1{\lambda_s(k)+1} \left|  \llangle\psi_s(k),\phi^{-1}_k (\hat V K u)_k\rrangle_{H^1}\right|^2 dk  
 \\
&&\leq C\norm{u}^2_{\ck} + \frac1n \sum_{j=1}^n\sum_{s\in S_{k_j}} \int_{B_\delta(0)} 
	\dfrac1{1-\mu(\lambda_s(k_j+\tilde{k})+1)}     \dfrac1{\lambda_s(k_j+\tilde{k})+1} \left|  \llangle\psi_s(k_j+\tilde{k}),\phi^{-1}_k (\hat V K u(k_j+\tilde{k}))\rrangle_{H^1}\right|^2 d\tilde{k} 
\\
&& \leq C\norm{u}^2_{\ck} +\frac1n \int_{B_\delta(0)} \frac{1}{1-\mu(\Lambda_1+\beta\tilde{k}^{2}+1)}f(\tilde{k},u)d\tilde{k}.
\eea
In the last but one inequality we use the fact that any $s\in S$ can be at most in $n$ sets $S_{{ k}_j}$; in the last line, due to analyticity, we have
for $|\tilde{k}|<\delta$ that $\lambda_s(k_j+\tilde{k})\leq \Lambda_1+\beta\tilde{k}^{2}$ for some $\beta>0$.
For any function
\begin{equation}
u=\sum_{i=1}^n \xi_i P\phi(\theta\Psi_i)\in\cL^\perp
\label{label}
\end{equation}
with coefficients $(\xi_i)_{i=1}^n\in\C^n$, we have from Lemma \ref{flowerbound} and continuity of $f$ that $f(\tilde k,u)$ is bounded below on $B_\delta(0)$. Thus the Rayleigh quotient satisfies the following estimate:
$$\frac{\llangle A_\mu u, u\rrangle_\ck}{\norm{u}_\ck^2}\leq C +\frac{C}{n} \int_{B_\delta(0)} \frac{1}{1-\mu(\Lambda_1+\beta\tilde{k}^{2}+1)} d\tilde{k} ~~~\hbox{ for some } C>0.$$
To show that the Rayleigh quotient tends to $-\infty$ as $\mu\to(\Lambda_1+1)^{-1}$, it is therefore sufficient for 
$$\int_0^\delta \frac{1}{\mu(\Lambda_1+\beta\tilde{k}^{2}+1)-1} d\tilde{k}$$ 
to diverge in the limit as $\mu\searrow(\Lambda_1+1)^{-1}$. We have
\bea
 \int_0^\delta \frac{1}{\mu(\Lambda_1+\beta\tilde{k}^{2}+1)-1} d\tilde{k}
&=& (\mu\beta(\mu(\Lambda_1+1)-1))^{-\frac12}\arctan\left(\delta\sqrt{\frac{\mu\beta}{\mu(\Lambda_1+1)-1}}\right)
\\
&\to & +\infty
 \hbox{ as } \mu\searrow(\Lambda_1+1)^{-1}.
\eea
Therefore, $$\max_{u\in\cL^\perp\setminus\{0\}} \dfrac{ \llangle A_\mu u, u \rrangle_{\ck}}{ \norm{u}^2_{\ck}} \to -\infty \hbox{ as } \mu\searrow(\Lambda_1+1)^{-1}.$$
As $\codim\cL=n$, the variational characterisation of the eigenvalues \eqref{defkappan} implies $\kappa_n(\mu) \to - \infty$ as $\mu\searrow(\Lambda_1+1)^{-1}$, and combined with Lemma \ref{lem:cont} and  \eqref{upperkappa} this means that at least $n$ eigenvalues are   created in the gap.
\end{proof}
Theorem \ref{upper}  and Theorem \ref{lower} together yield the following result.
\begin{theorem}  \label{theoremall} Let Assumptions \ref{asseps}, \ref{asnc} and \ref{as1} hold, i.e. 
\begin{enumerate}
\item[(i)]  $\eps_0,\eps_1\in L^\infty(\R^2)$.
\item[(ii)]
  $\eps_i\geq c_0>0$ for some constant $c_0$ and $i=0,1$.
\item[(iii)]The perturbation is nonnegative, i.e.~
\begin{align*}
\eps_1(\bx) - \eps_0(\bx) \geq 0.
\end{align*}
\item[(iv)] There exists a ball $D$ such that $\eps_1-\eps_0>0$ on $D$. 
\item[(v)] The band functions $\lambda_s$ are not constant as functions of  $k\in[-\pi,\pi]$.
\item[(vi)] There are $\alpha>0$   and $\delta>0$ such that   for all $(\hat s, \hat k) \in \Sigma$ and ${ k }\in [-\pi,\pi] $ satisfying  $|{  k} -{ \hat k}| \leq \delta$, 
$$\lambda_{\hat s} ({ k})\geq \Lambda_1+\alpha | {k}-{ \hat k}|^2.$$
\end{enumerate}
Moreover, let $\norm{\frac{1}{\eps_0}-\frac{1}{\eps_1}}_\infty>0$ be sufficiently small. Then the number of eigenvalues of the operator $\lp $ in the gap $(\Lambda_0,\Lambda_1)$ equals $n$, the finite number of solution pairs
 $(s,{k})$ of the equation $\Lambda_1=\lambda_s({k})$.
\end{theorem}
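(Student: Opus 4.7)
The plan is to observe that Theorem \ref{theoremall} is essentially a corollary of the two main results already established in the preceding sections: Theorem \ref{upper} (upper bound of $n$ eigenvalues) and Theorem \ref{lower} (lower bound of $n$ eigenvalues). Both are phrased under a smallness hypothesis of the form $\norm{\frac{1}{\eps_0}-\frac{1}{\eps_1}}_\infty < c$, and both derive the eigenvalue count via the Birman–Schwinger reformulation in Lemma \ref{lemmavar}, which identifies eigenvalues $\lambda \in (\Lambda_0, \Lambda_1)$ of $\lp$ with parameters $\mu = (\lambda+1)^{-1}$ at which $-1$ is an eigenvalue of the compact symmetric operator $A_\mu$ on $\ck$.

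The first step is to check that the hypotheses of both theorems are subsumed by the assumptions of Theorem \ref{theoremall}. Items (i)–(iv) are Assumption \ref{asseps}, item (v) is Assumption \ref{asnc}, and item (vi) is Assumption \ref{as1}. In particular, Theorem \ref{lower} needs only (i)–(v), while Theorem \ref{upper} needs all of (i)–(vi); both are therefore directly applicable.

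The second step is the bookkeeping of the smallness threshold. Let $c_{\mathrm{up}} > 0$ denote the constant supplied by Theorem \ref{upper} and $c_{\mathrm{low}} > 0$ the threshold implicit in Theorem \ref{lower} (which comes through Corollary \ref{cor1} via Lemma \ref{lem8n} and Lemma \ref{lem12}). Setting $c := \min\{c_{\mathrm{up}}, c_{\mathrm{low}}\}$ and assuming $0 < \norm{\frac{1}{\eps_0}-\frac{1}{\eps_1}}_\infty < c$, Theorem \ref{upper} yields $N(\lp) \leq n$ and Theorem \ref{lower} yields $N(\lp) \geq n$, where $N(\lp)$ denotes the number of eigenvalues of $\lp$ in $(\Lambda_0, \Lambda_1)$. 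Hence $N(\lp) = n$.

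There is no genuine obstacle at this stage; the entire analytical content has been carried out in proving Theorems \ref{upper} and \ref{lower}. The only subtlety worth flagging is that the two thresholds $c_{\mathrm{up}}$ and $c_{\mathrm{low}}$ must be chosen uniformly in $\mu \in ((\Lambda_1+1)^{-1}, (\Lambda_0+1)^{-1})$, which is exactly what the cited proofs already establish (Theorem \ref{upper} produces a constant $C$ independent of $\mu$, and Lemma \ref{lem8n} gives a $\mu$-independent lower bound once $\mu$ stays away from the endpoint $(\Lambda_1+1)^{-1}$, which is sufficient for the continuation argument driving the lower bound).
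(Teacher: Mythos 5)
Your proposal is correct and matches the paper's own argument exactly: the paper states Theorem \ref{theoremall} as the direct combination of Theorem \ref{upper} and Theorem \ref{lower}, with the smallness condition taken to satisfy both thresholds simultaneously. Your extra bookkeeping (checking that hypotheses (i)--(vi) subsume those of both theorems, taking $c=\min\{c_{\mathrm{up}},c_{\mathrm{low}}\}$, and noting the uniformity in $\mu$) is just a more explicit rendering of the same one-line deduction.
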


\section{Funding}
\thanks{
This work was supported by the British Engineering and Physical Sciences Research Council [EP/I038217/1 to B.M.B. and I.W.], the National Science Foundation [DMS 1412023, DMS-1614797, DMS-1810687 to V.H.] and the Deutsche Forschungsgemeinschaft [CRC 1173 to M.P.].

\appendix

\section{Proof of Proposition \ref{prop:FB}}\label{FB}
From \cite[Theorem 4.3 \& Theorem 4.7]{BHPW11}, we have $\sigma(\lo)=\overline{  \cup_k  \sigma(L_k)}$, as required for Proposition \ref{prop:FB} \eqref{spectra}.

For Proposition \ref{prop:FB} \eqref{l1}, let $v\in\ch$ be defined by $v(\cdot,k)=(L_k-\lambda)^{-1} (\hat{V}f)_k$. Then in the proof of  \cite[Theorem 4.3]{BHPW11} 
it is shown that $(\lo-\lambda)u=f$, where $u=V^{-1}v$.
Thus both sides of \eqref{*} equal $v(\cdot,k)$ and the statement is true.

To prove Proposition \ref{prop:FB} \eqref{resexpansionlemma} let
$f \in L^2(\Omega)$ and use the decomposition (see \cite{KuchmentBook, BHPW11}) 
$$f(:)=\frac1{\sqrt{2\pi}}\sum_{s=1}^\infty  \int_{-\pi}^\pi \llangle  Uf, \psi_s(\cdot, k)\rrangle_{L^2} \psi_s(:,k)dk,$$
where the series converges in $L^2(\Omega)$.
Thus  for $g\in H^{-1}_{qp}(\Omega)$, 
$$(\lo-\lambda)^{-1} g(:) = \frac1{\sqrt{2\pi}} \sum_{s=1}^\infty \int_{-\pi}^\pi  \llangle U(\lo-\lambda)^{-1} g, \psi_s(\cdot,k)\rrangle_{L^2} \psi_s(:,k) dk$$
and using Proposition \ref{prop:FB} \eqref{l1} and that $U\vert_{H^1_{qp}(\Omega)}=V$ we get
\begin{align}\label{FloqRes}
(\lo-\lambda)^{-1}g (:){}& =\frac1{\sqrt{2\pi}}\sum_{s=1}^\infty  \int_{-\pi}^\pi  \llangle (L_k-\lambda)^{-1} \hat V g ( k), \psi_s(\cdot,k)\rrangle_{L^2}\psi_s(:,k) dk.
\end{align}
Now, with $\phi_k=(L_k+1):\ch_k \to \ch'_k$, using \eqref{eq:phik} and that $\phi_k$ and $(L_k-\lambda)^{-1}$ commute we have
\begin{align*}
  \llangle (L_k-\lambda)^{-1} (\hat V g)_k, \psi_s(\cdot,k)\rrangle_{L^2} &  =  \llangle (L_k-\lambda)^{-1} (\hat V g)_k, \phi_k  \psi_s(\cdot,k)  \rrangle_{\ch_k'}  \\
 {}& =  \llangle  (\hat V g)_k, (L_k-\lambda)^{-1} \phi_k  \psi_s(\cdot,k)  \rrangle_{\ch_k'}  \\
  {}& = \dfrac 1 { \lambda_s(k)-\lambda}   \llangle  (\hat V g)_k,  \phi_k  \psi_s(\cdot,k)  \rrangle_{\ch_k'}  \\
 {}& = \dfrac 1 { \lambda_s(k)-\lambda}  (\hat V g)_k [  \psi_s (   \cdot,k)]. 
\end{align*}
Inserting this in \eqref{FloqRes} gives Proposition \ref{prop:FB} \eqref{resexpansionlemma}.
We next show Proposition \ref{prop:FB} \eqref{RQ}. 
 Noting that for $\lambda=\frac{1}{\mu}-1$ we have
 $$-\frac{1}{\mu}\frac{1}{\lambda_s(k)-\lambda}= \dfrac1{1-\mu (\lambda_s(k)+1)}$$ and using \eqref{notation},
 by Proposition \ref{prop:FB} \eqref{resexpansionlemma}, we have
\bea &&
\llangle -\dfrac1\mu (\lo+1-\dfrac1\mu)^{-1} (K u), Ku\rrangle_{H^{-1}} \\
  &&=\dfrac1{2\sqrt{\pi}}\llangle \sum_{s=1}^\infty\int_{-\pi}^\pi  \dfrac1{1-\mu (\lambda_s(k)+1)} ( \hat V Ku)_k  [\psi_s(\cdot,k)]\psi_s(:,k) dk , Ku(:)\rrangle_{H^{-1}} \\
  &&=\dfrac1{\sqrt{2\pi}}\lim_{l\to\infty}\llangle \sum_{s=1}^l\int_{-\pi}^\pi  \dfrac1{1-\mu (\lambda_s(k)+1)}\llangle (\phi^{-1}_\ch \hat V Ku)(\cdot,k) , \psi_s(\cdot,k)\rrangle_{H^1}\psi_s(:,k) dk , \phi^{-1} Ku(:)\rrangle_{L^2(\Omega)}.
\eea 
Next let 
\begin{align*}
 \chi_l(:,k)&= \sum_{s=1}^l \dfrac1{1-\mu (\lambda_s(k)+1)} \llangle (\phi^{-1}_\ch \hat V Ku)(\cdot,k) , \psi_s(\cdot,k)\rrangle_{H^1((0,1)^2)}\psi_s(:,k)
\end{align*}
Then using the formula \eqref{inverseFloquet} for the inverse Floquet transform and the isometry property of $U$ we get
\begin{align*} 
\llangle -\dfrac1\mu (\lo+1-\dfrac1\mu)^{-1} (K u), Ku\rrangle_{H^{-1}} 
&= \lim_{l\to\infty} \llangle \frac{1}{\sqrt{2\pi}}\int_{-\pi}^\pi E_k\chi_l(\cdot,k) \ dk, \phi^{-1} Ku\rrangle_{L^2(\Omega)}\\
&=\lim_{l\to\infty} \llangle U^{-1}\chi_l, \phi^{-1} Ku\rrangle_{L^2(\Omega)}\\
&=\lim_{l\to\infty}\llangle \chi_l, U\phi^{-1} K u\rrangle _{L^2( (0,1)^2\times [-\pi,\pi] ) }.
\end{align*}
Therefore, by Proposition \ref{prop:FB} \eqref{l1} using that $U\vert_{H^1_{qp}(\Omega)}=V$, and by \eqref{eq:phik} we get
\begin{align}\nonumber
 &\llangle -\dfrac1\mu (\lo+1-\dfrac1\mu)^{-1} (K u), Ku\rrangle_{H^{-1}} \\ \nonumber
 {}& = \lim_{l\to\infty}\int_{-\pi}^\pi \llangle \chi_l(k),\phi^{-1}_k (\hat V Ku)_k\rrangle_{L^2(0,1)^2} dk \\ \nonumber
 {}&= \lim_{l\to\infty}\int_{-\pi}^\pi    \llangle\phi^{-1}_k\chi_l(k),\phi^{-1}_k (\hat V Ku)_k\rrangle_{H^1(0,1)^2} dk\\ \nonumber
 {}&= \lim_{l\to\infty}\int_{-\pi}^\pi \overline{(\hat V Ku)_k[\phi^{-1}_k\chi_l(k)]}\\ \nonumber
  {}&= \lim_{l\to\infty} \int_{-\pi}^\pi\llangle\phi^{-1}_k\chi_l(k),  (\phi^{-1}_\ch \hat V K u)(k)\rrangle_{H^1(0,1)^2} \\ \nonumber
 {}& = \lim_{l\to\infty}\int_{-\pi}^\pi \sum_{s=1}^l \dfrac1{1-\mu(\lambda_s(k)+1)}\dfrac1{\lambda_s(k)+1}\llangle (\phi^{-1}_\ch \hat V K u)(k),\psi_s(k)\rrangle_{H^1(0,1)^2}\\ \nonumber
 {} & \hspace{175pt}
\llangle\psi_s(k),(\phi^{-1}_\ch \hat V K u)(k)\rrangle_{H^1(0,1)^2} dk.
\\ \label{eq:liml}
 {}&=\lim_{l\to\infty}\int_{-\pi}^\pi  \sum_{s=1}^l\dfrac1{1-\mu(\lambda_s(k)+1)}     \dfrac1{\lambda_s(k)+1} \left|  \llangle\psi_s(k),\phi^{-1}_k (\hat V K u)_k\rrangle_{H^1(0,1)^2}\right|^2 dk.
\end{align}
We now wish to interchange the order of taking the limit and integrating.
To do this note that
 \bea
 \chi_l(:,k)
 &=& \sum_{s=1}^l \dfrac{\lambda_s(k)+1}{1-\mu (\lambda_s(k)+1)} \llangle (\phi^{-1}_\ch \hat V Ku)(\cdot,k) , \frac{\psi_s(\cdot,k)}{\sqrt{\lambda_s(k)+1}}\rrangle_{H^1((0,1)^2)}\frac{\psi_s(:,k)}{\sqrt{\lambda_s(k)+1}} 
 \eea
and set $$\chi(:,k)= \sum_{s=1}^\infty \dfrac1{1-\mu (\lambda_s(k)+1)} \llangle (\phi^{-1}_\ch \hat V Ku)(\cdot,k) , \psi_s(\cdot,k)\rrangle_{H^1((0,1)^2)}\psi_s(:,k) .$$
Since the set $ \left\{  \dfrac{  \psi_s(k)} {\sqrt{ \lambda_s(k)+1}}\right\}$ is an orthonormal basis in $\ch_k$ and $(\phi^{-1}_\ch \hat V K u)(k)\in\ch_k$, the series converges in $\ch_k$. In particular, we have that for every $k\in[-\pi,\pi]$
 $$
\chi_l(\cdot,k)   \to \chi (\cdot,k)  \hbox{ in } H^1( (0,1)^2)    \; {\rm as \;} l\to \infty.
$$
Moreover, by Bessel's inequality
$$
\int_{(0,1)^2 }|  \chi (x,k)-\chi_l(x,k)|^2 dx \leq \int_{(0,1)^2 }| \chi (x,k)|^2 dx 
$$
and as a function of $k$ the right hand side lies in $L^1(-\pi,\pi).$
By Fubini's theorem, we have
\beq
\int_{-\pi}^\pi  \left (   \int _{(0,1)^2}  | \chi(x,k)-\chi_l(x,k)|^2 dx \right ) dk = \int_{(0,1)^2} \left ( \int_{-\pi}^\pi  | \chi (x,k)-\chi_l(x,k)|^2dk \right )  dx  \label{Fubini}
\enq
 and by dominated convergence the LHS of \eqref{Fubini} tends to $0$  and so the RHS  of \eqref{Fubini} also does.
 This implies that
\beq
\int_{-\pi}^\pi \chi_l(\cdot, k) dk \to\int _{-\pi}^\pi \chi(\cdot,k) dk \; \hbox{ in\;} L^2((0,1)^2).\label{**}
\eeq
Therefore, using the Cauchy-Schwarz inequality we have that
\begin{align*}
\left|\int_{-\pi}^\pi \llangle (\phi^{-1}_\ch \hat V K u)(k),\phi_k^{-1}(\chi-\chi_l)(k)\rrangle_{H^1(0,1)^2}dk\right| \\
\leq \int_{-\pi}^\pi \left|\llangle (\phi^{-1}_\ch \hat V K u)(k),(\chi-\chi_l)(k)\rrangle_{H^1(0,1)^2}\right| dk \to 0
\end{align*}
as $l\to \infty$ and so we can exchange the order of summation over $s$ and integration over $k$ in \eqref{eq:liml}.
  This gives
\begin{align*}
 &\llangle -\dfrac1\mu (\lo+1-\dfrac1\mu)^{-1} (K u), Ku\rrangle_{H^{-1}} \\
 {}&=\int_{-\pi}^\pi  \sum_{s=1}^\infty\dfrac1{1-\mu(\lambda_s(k)+1)}     \dfrac1{\lambda_s(k)+1} \left|  \llangle\psi_s(k),\phi^{-1}_k (\hat V K u)_k\rrangle_{H^1(0,1)^2}\right|^2 dk.
\end{align*}
  proving Proposition \ref{prop:FB} \eqref{RQ}.

Finally, we consider Proposition \ref{prop:FB} \eqref{expansionlemma}.
For $f\in H^{-1}_{qp}(\Omega)$, we have 
\bea
\norm{f}_{H^{-1}}^2 &=& \llangle \phi^{-1} f, \phi^{-1} f\rrangle_{H^1} \ =\ \llangle (\lo+1)^{-1} f, \phi^{-1} f\rrangle_{H^1} \\
&=&\frac1{\sqrt{2\pi}}\llangle \sum_{s=1}^\infty \int_{-\pi}^\pi \dfrac1{\lambda_s(k)+1} ( \hat V f)_k[\psi_s(\cdot, k)] \psi_s( :,k) dk, \phi^{-1} f(:)\rrangle_{H^1}
\eea
where we have used Proposition \ref{prop:FB} \eqref{resexpansionlemma} for $\lambda=-1$.

 Next, let $$\tilde\chi(:,k)=    \sum_{s=1}^\infty \dfrac1{\lambda_s(k)+1} \llangle (\phi^{-1}_\ch \hat V f)(\cdot,k) , \psi_s(\cdot,k)\rrangle_{H^1((0,1)^2)}\psi_s(:,k).$$
 
 By a similar argument to the proof of Proposition \ref{prop:FB} \eqref{RQ}, we can swap the order of summation and integration and 
 then using the formula \eqref{inverseFloquet} for the inverse Floquet transform and the isometry property of $V$ we get
\bea 
\norm{f}_{H^{-1}}^2  &=& \llangle \frac{1}{\sqrt{2\pi}}\int_{-\pi}^\pi E_k\tilde\chi(\cdot,k) \ dk, \phi^{-1} f\rrangle_{H^1} \ =\ \frac{1}{\sqrt{2\pi}}\llangle V^{-1}\tilde\chi,\phi^{-1}f\rrangle_{H^1} \ =\ \frac{1}{\sqrt{2\pi}} \llangle \tilde\chi, V\phi^{-1} f\rrangle _{\ch}.
\eea

 Therefore, using Proposition \ref{prop:FB} \eqref{l1}
\bea
 \norm{f}_{H^{-1}}^2 
 & = & \frac{1}{\sqrt{2\pi}}\int_{-\pi}^\pi \llangle \tilde\chi(k),\phi^{-1}_k (\hat V f)_k\rrangle_{\ch_k} dk \\
  & = & \frac{1}{\sqrt{2\pi}}\int_{-\pi}^\pi \llangle  \sum_{s=1}^\infty \dfrac1{\lambda_s(k)+1} \llangle (\phi^{-1}_\ch \hat V f)(\cdot,k) , \psi_s(\cdot,k)\rrangle_{H^1((0,1)^2)}\psi_s(:,k),\phi^{-1}_k (\hat V f)_k\rrangle_{\ch_k} dk \\
 &=&\frac{1}{\sqrt{2\pi}}\int_{-\pi}^\pi \sum_{s=1}^\infty   \dfrac1{\lambda_s(k)+1} \left|  \llangle\psi_s(k),\phi^{-1}_k (\hat V f)_k\rrangle_{H^1(0,1)^2}\right|^2 dk.
\eea
This completes the proof.

\end{document}